\newcommand{\Q}{\mathbb{Q}}
\newcommand{\N}{\mathbb{N}}
\newcommand{\Z}{\mathbb{Z}}
\newcommand{\R}{\mathbb{R}}
\newcommand{\C}{\mathbb{C}}
\newcommand{\Gal}{\mathrm{Gal}}
\newcommand{\G}{\mathbb{G}}
\newcommand{\poubelle}[1]{}
\theoremstyle{plain}
\newtheorem{thm}{Theorem}[section]
\newtheorem{prop}[thm]{Proposition}
\newtheorem{cor}[thm]{Corollary}
\newtheorem{lmm}[thm]{Lemma}
\newtheorem{conj}[thm]{Conjecture}
\newtheorem*{lmm*}{Lemma}
\newtheorem*{conj*}{Conjecture}
\newtheorem*{thm*}{Theorem}
\newtheorem*{claim*}{Fait}
\theoremstyle{plain}
\newtheorem{rqu}[thm]{Remark}
\theoremstyle{plain}
\newtheorem{defn}[thm]{Definition}
\begin{document}

\poubelle{\linenumbers}
%\footnote{ \textit{2020 MSC.} Primary: 11G50, 11R04}

\title[New way to tackle a conjecture of R\'emond]{A new way to tackle a conjecture of R\'emond} 
\date{}
\author{Arnaud Plessis}
\address{Arnaud Plessis : Morningside Center of Mathematics, Academy of Mathematics and Systems Science, Chinese Academy of Sciences, Beijing 100190}
\email{plessis@amss.ac.cn} 
\begin{abstract}
Let $\Gamma\subset \overline{\Q}^*$ be a finitely generated subgroup.
Denote by $\Gamma_\mathrm{div}$ its division group.  
A recent conjecture due to R\'emond, related to the Zilber-Pink conjecture, predicts that the absolute logarithmic Weil height of an element of $\Q(\Gamma_\mathrm{div})^*\backslash \Gamma_{\mathrm{div}}$ is bounded from below by a positive constant depending only on $\Gamma$.
In this paper, we propose a new way to tackle this problem. 
\end{abstract}
\maketitle 

\section{Introduction} \label{Introduction}
\subsection{R\'emond's conjecture} 
Let $\alpha\in\overline{\Q}$. 
The (absolute logarithmic) Weil height of $\alpha$ is the real number \[h(\alpha) = \frac{1}{[\Q(\alpha) : \Q]} \log\left(\vert lc(\alpha)\vert \prod_\sigma \max\{1, \vert \sigma\alpha\vert\}\right), \] where $lc(\alpha)$ denotes the leading coefficient of the minimal polynomial of $\alpha$ over $\Z$, and where $\sigma$ runs over all field embeddings $\Q(\alpha) \to \overline{\Q}$. 
The function $h : \overline{\Q} \to \R$ is non-negative and vanishes precisely at $\mu_\infty$, the set of all roots of unity, and $0$ by a theorem of Kronecker. 
It is also invariant under Galois conjugation and satisfies $h(\alpha^n)=\vert n\vert h(\alpha)$ for all $\alpha\in\overline{\Q}$ and all $n\in\Z$. 
For more properties on $h$, see \cite{BombieriGubler}.
 
Let $X$ be a set of algebraic numbers.
We say that points of small height (or short, small points) of $X$ lie in a set $Y\subset \overline{\Q}$ if there exists a positive constant $c$ such that $h(\alpha) \geq c$ for all $\alpha\in X\backslash Y$.  

The case where small points of an algebraic field lie in $\mu_\infty\cup \{0\}$ has been intensively studied since the beginning of this century, see for example \cite{BombieriZannier, AmorosoDvornicich, AmorosoZannier10, Habegger, AmorosoDavidZannier, Grizzard, FiliMilner, Galateau, Sahu, Plessismino, PazukiPengo, PlessisgenHab, Frey}. 
For various technical reasons, it is difficult to adapt the ideas of these papers to locate small points of an algebraic field that are not contained in $\mu_\infty \cup \{0\}$. 
Up to my knowledge, the first one who managed to do this is Amoroso in 2016 \cite{Amoroso}, see our explanation below for more details. 

Consider the field $L=\Q(\mu_\infty, \alpha, \alpha^{1/2},\alpha^{1/3},\dots)$, where $\alpha\in \overline{\Q}^*$.
It is trivial to construct small points in $L^*$. 
For instance, we have all roots of unity, but also $\alpha^{1/n}$ with $n$ large enough since $h(\alpha^{1/n})=h(\alpha)/n$. 
But does $L^*$ also contain non-obvious small points; i.e. elements of small height not in $\{\zeta \alpha^q, \zeta\in\mu_\infty, q\in\Q\}$?
By a result of Amoroso and Dvornicich \cite{AmorosoDvornicich}, the answer is no if $\alpha\in\mu_\infty$.  
But this question is still open for $\alpha\notin\mu_\infty$.
A very particular case of a deep and recent conjecture of R\'emond, related to the Zilber-Pink conjecture, predicts a negative answer to this question, see Conjecture \ref{conj 0} below.  

Throughout this introduction $\Gamma$ always denotes a finitely generated subgroup of $\overline{\Q}^*$.
Define $\Gamma_\mathrm{div}$ as the division group of $\Gamma$, i.e. the set of $\gamma\in\overline{\Q}^*$ for which there exists an integer $n\in\N=\{1,2,\dots\}$ satisfying $\gamma^n\in \Gamma$.

\begin{conj}[R\'emond, \cite{Remond}, Conjecture 3.4] \label{conj 0}
Let $\Gamma\subset \overline{\Q}^*$ be a finitely generated subgroup.
\begin{enumerate} [(i)]
\item (strong form): There is a positive constant $c$ such that 
\begin{equation*}
h(\alpha) \geq \frac{c}{[\Q(\Gamma_\mathrm{div},\alpha) : \Q(\Gamma_\mathrm{div})]} \; \text{for all} \; \alpha\in \overline{\Q}^*\backslash \Gamma_\mathrm{div}.
\end{equation*}
\item (weak form): For any $\varepsilon >0$, there exists a positive constant $c_\varepsilon$ such that
\begin{equation*}
h(\alpha)\geq \frac{c_\varepsilon}{[\Q(\Gamma_\mathrm{div},\alpha) : \Q(\Gamma_\mathrm{div})]^{1+\varepsilon}} \; \text{for all} \; \alpha\in \overline{\Q}^*\backslash \Gamma_\mathrm{div}.
\end{equation*}
\item (degree one form): Small points of $\Q(\Gamma_\mathrm{div})^*$ lie in $\Gamma_\mathrm{div}$.
\end{enumerate}
\end{conj}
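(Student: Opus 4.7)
The plan is first to reduce to the weakest form. Clearly (i) implies (ii) by taking $c_\varepsilon=c$; and (ii) implies (iii) by observing that if $\alpha\in \Q(\Gamma_\mathrm{div})^*$ then $[\Q(\Gamma_\mathrm{div},\alpha):\Q(\Gamma_\mathrm{div})]=1$, which forces $h(\alpha)\geq c_\varepsilon$ on $\Q(\Gamma_\mathrm{div})^*\setminus\Gamma_\mathrm{div}$. So the degree one form is a consequence of the other two, and it is natural to attack (iii) first.

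For (iii), my strategy is to combine the Kummer-theoretic structure of the tower $\Q(\Gamma_\mathrm{div})/\Q$ with a local height argument in the Amoroso-Dvornicich tradition. First pick generators $\gamma_1,\ldots,\gamma_r$ of $\Gamma$ modulo torsion. Any $\alpha\in\Q(\Gamma_\mathrm{div})^*$ lies in some $K_N:=\Q(\mu_\infty,\gamma_1^{1/N},\ldots,\gamma_r^{1/N})$; the extension $K_N/\Q(\mu_\infty)$ is abelian with Galois group a subgroup of $(\Z/N\Z)^r$. The aim is to exploit this rigidity to produce enough Galois conjugates of $\alpha$ inside $K_N$ and then apply an averaging inequality on local heights to force $h(\alpha)\geq c$ unless $\alpha$ itself lies in $\Gamma_\mathrm{div}$.

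Concretely, at each non-archimedean place $\mathfrak{p}$ of $K_N$ lying above a rational prime $p$ coprime to $N$ and to every $\gamma_i$, the extension $K_N/\Q$ is tamely ramified and one may invoke the Frobenius substitution at $p$: $\operatorname{Frob}_p$ acts on $\mu_N$ by $\zeta\mapsto\zeta^p$ and permutes the $\gamma_i^{1/N}$ by a prescribed character. Summing the local contributions $\log\max(1,|\sigma\alpha|_\mathfrak{p})$ over a well-chosen set of unramified primes and over Galois, one hopes to recover a lower bound of the expected size. This would be the direct generalisation of the Amoroso-Dvornicich method, in which $\Gamma$ is trivial.

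The main obstacle, and the reason the conjecture is not already a theorem, is the handling of the bad places, namely the finitely many primes at which some $\gamma_i$ has non-trivial valuation. At such a place the local contribution of $\gamma_i^{1/N}$ is only $h(\gamma_i)/N$, which tends to zero as $N$ grows, and the Frobenius trick no longer suffices to distinguish $\alpha$ from an element of $\Gamma_\mathrm{div}$. The genuinely new input proposed here must therefore either bypass the Frobenius argument at the bad primes, by means of a density estimate over the good primes that is uniform in $N$, or absorb the total bad contribution into the constant $c=c(\Gamma)$ via an averaging argument over $N$. Producing such an estimate, robust under enlargement of $N$ yet sensitive enough to detect non-membership in $\Gamma_\mathrm{div}$, is the technical heart of the approach and is where I expect the principal difficulty to lie.
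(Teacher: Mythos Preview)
The statement you are attempting to prove is a \emph{conjecture}; the paper does not prove it either. Your observation that $(i)\Rightarrow(ii)\Rightarrow(iii)$ is correct and matches what the paper records in one line. Beyond that, however, there is nothing to compare against a ``paper's own proof'': no proof exists.

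What you have written for $(iii)$ is not a proof but a strategy sketch, and you are candid about this in your final paragraph. The approach you outline---Kummer description of $K_N/\Q(\mu_\infty)$, Frobenius at unramified primes, averaging local contributions---is precisely the ``classical method'' descending from Amoroso--Dvornicich. The paper explicitly addresses this: it states that this method appears \emph{(very) limited} for Conjecture~\ref{conj 0}~(iii) once $\Gamma$ is nontrivial (see the discussion after Conjecture~\ref{conj 0} and the reference to \cite[Remark 3.4]{Amoroso}), and the obstacle you identify---the bad primes where some $\gamma_i$ has nontrivial valuation, with local contribution of order $1/N$---is exactly why. You have located the wall but not proposed a way through it; your closing sentence concedes this.

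The paper's contribution is to abandon the local/Frobenius route altogether and replace it with an archimedean, equidistribution-based strategy: Theorem~\ref{thm0} reduces $(iii)$ to the study of sequences $(y_n)$ in $\Q(\Gamma_{\mathrm{div}})^*$ whose Galois conjugates over $K_\Gamma=\Q(\mu_\infty,\Gamma)$ concentrate on the unit circle, and then splits the problem into Conjectures~\ref{conj 0.5} and~\ref{conj 1}. The latter is attacked via the finiteness of the sets $\mathcal{S}_N$ (Theorem~\ref{thm S N fini}) and the structural Theorem~\ref{thm 2}. This is orthogonal to your Frobenius sketch: rather than summing $p$-adic contributions, one controls the complex absolute values of all $K_\Gamma$-conjugates simultaneously. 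Your proposal does not engage with this mechanism at all.
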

The reader interested on recent advances concerning R\'emond's conjecture is referred to \cite{Remond, Amoroso, Grizzard2, Pottmeyer, Plessisloca, Plessisiso}.

Write $\langle X\rangle$ for the group generated by a set $X\subset\overline{\Q}^*$. 
We clearly have the implications $(i) \Rightarrow (ii) \Rightarrow (iii)$.
The strong form generalizes the relative Lehmer's problem, which corresponds to the case $\Gamma= \{1\}$. 
Up to my knowledge, the strong form is not yet known in any situation and both the weak form and the degree one form are only known when $\Gamma$ is trivial, see \cite{AmorosoZannier00} and \cite{AmorosoDvornicich}.
The first partial result going in the direction of Conjecture \ref{conj 0} for nontrivial groups was given by Amoroso.
He proved that small points of $\Q(\zeta_3, 2^{1/3}, \zeta_{3^2}, 2^{1/3^2},\dots)^*$, with $\zeta_n=e^{2i\pi/n}$, lie in $\langle 2\rangle_\mathrm{div}$ \cite[Theorem 1.3]{Amoroso}. 
The author then proved the same assertion by remplacing $2$ with $\alpha\in\Q^*$ and $3$ with a rational prime $p>2$ \cite[Th\'eor\`eme 1.8]{Plessisiso}. 

The proof of these last two results relies on the "classical method", i.e. the one to treat the case where small points of an algebraic field lie in $\mu_\infty \cup \{0\}$.
As already implied in \cite[Remark 3.4]{Amoroso}, it seems that this method is (very) limited to handle Conjecture \ref{conj 0} $(iii)$, which explains why it is still largely open.
So we need to tackle this conjecture from a totally different angle.
We suggest a new one below.

\subsection{Presentation of results} \label{subsection 1.2}
Let $(y_n)$ be a sequence of $\Q(\Gamma_{\mathrm{div}})^*$ such that $h(y_n)\to 0$. 
From Bilu's equidistribution theorem \cite{Bilu}, it is not hard to check that \[ \frac{\# \{\sigma : \Q(y_n)\hookrightarrow \overline{\Q}, \;  1-\varepsilon\leq \vert \sigma y_n \vert\leq 1+\varepsilon\}}{[\Q(y_n) : \Q]} \underset{n\to+\infty}{\longrightarrow} 1 \] for all $\varepsilon >0$ (if the sequence of terms $[\Q(y_n) : \Q]$ is bounded, then $y_n\in\mu_\infty$ for all $n$ large enough by Northcott's theorem, and so the ratio above is $1$ for all $n$ large enough). 
In other words, if the Weil height of $y_n$ is small enough, then "most of" its Galois conjugates over $\Q$ are "close" to the unit circle. 
Nonetheless, as $K_\Gamma=\Q(\mu_\infty, \Gamma)$ is not a number field, Bilu's theorem above cannot tell anything about the location in the complex plane of Galois conjugates of $y_n$ over $K_\Gamma$. 
Our results stipulate that a fairly precise knowledge of the distribution of these numbers allows us to solve a part of Conjecture \ref{conj 0} $(iii)$. 
In Section \ref{section 1.5}, we will prove a result reducing the study of Conjecture \ref{conj 0} $(iii)$ to the case that for all $n$ large enough, "most of" Galois conjugates of $y_n$ over $K_\Gamma$ are "close" to the unit circle. 
 
Let $\alpha\in\overline{\Q}$ and let $\varepsilon>0$. 
We write $O_\Gamma(\alpha)$ for the orbit of $\alpha$ under $\Gal(\overline{\Q}/K_\Gamma)$ and we put \[ d_{\Gamma, \varepsilon}(\alpha):= \frac{\# \{x\in O_\Gamma(\alpha), 1-\varepsilon\leq \vert x \vert^2\leq 1+\varepsilon\}}{\# O_\Gamma(\alpha)}. \]

\begin{thm} \label{thm0}
Let $\Gamma\subset \overline{\Q}^*$ be a finitely generated subgroup.
Then Conjecture \ref{conj 0} $(iii)$ is equivalent to the following assertion: Let $(y_n)$ be a sequence of $\Q(\Gamma_{\mathrm{div}})^*$ such that $h(y_n)\to 0$. 
Assume that $d_{\Gamma, \varepsilon}(y_n)\underset{n\to+\infty}{\longrightarrow} 1$ for all $\varepsilon>0$.
Then $y_n\in\Gamma_{\mathrm{div}}$ for all $n$ large enough.  
\end{thm}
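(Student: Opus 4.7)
The theorem is an equivalence, to be proved in two steps.

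\emph{Forward direction} (Conjecture \ref{conj 0}(iii) implies the assertion). This is immediate. The uniform lower bound $h(\alpha) \geq c > 0$ for $\alpha \in \Q(\Gamma_{\mathrm{div}})^* \setminus \Gamma_{\mathrm{div}}$ provided by the conjecture forces any sequence $(y_n) \subset \Q(\Gamma_{\mathrm{div}})^*$ with $h(y_n) \to 0$ to lie eventually in $\Gamma_{\mathrm{div}}$; the hypothesis $d_{\Gamma, \varepsilon}(y_n) \to 1$ is never used.

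\emph{Reverse direction} (the assertion implies Conjecture \ref{conj 0}(iii)). This is the substantive direction, which I argue by contrapositive. Fix a sequence $(y_n) \subset \Q(\Gamma_{\mathrm{div}})^* \setminus \Gamma_{\mathrm{div}}$ with $h(y_n) \to 0$ witnessing the failure of Conjecture (iii). The aim is to construct from $(y_n)$ another sequence $(z_n) \subset \Q(\Gamma_{\mathrm{div}})^* \setminus \Gamma_{\mathrm{div}}$ with $h(z_n) \to 0$ \emph{and} $d_{\Gamma, \varepsilon}(z_n) \to 1$ for every $\varepsilon > 0$; this contradicts the assertion.

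Two ingredients feed the construction. First, Northcott's theorem lets us replace $(y_n)$ by a subsequence with $[\Q(y_n):\Q] \to \infty$, after which Bilu's equidistribution theorem applied over $\Q$ yields $\delta_n \to 0$ with at least a fraction $1-\delta_n$ of the $\Gal(\overline{\Q}/\Q)$-conjugates of $y_n$ lying in the annulus $1-\varepsilon \leq |z|^2 \leq 1+\varepsilon$. Second, the extension $\Q(\Gamma_{\mathrm{div}})/K_\Gamma$ is Galois: if $\gamma \in \Gamma_{\mathrm{div}}$ with $\gamma^n \in \Gamma \subset K_\Gamma$ and $\tau \in H := \Gal(\overline{\Q}/K_\Gamma)$, then $(\tau\gamma)^n = \gamma^n$, so $\tau\gamma \in \mu_n\gamma \subset \Gamma_{\mathrm{div}}$, and therefore the $H$-orbit of any element of $\Q(\Gamma_{\mathrm{div}})^*$ remains inside $\Q(\Gamma_{\mathrm{div}})^*$. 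The $\Gal(\overline{\Q}/\Q)$-orbit of $y_n$ partitions into $H$-orbits; weighting each orbit by its size and averaging, the Bilu mass near $S^1$ forces at least one $H$-orbit to have its fraction near $S^1$ of order $\geq 1 - \delta_n \to 1$. The remaining task is to realize such a good orbit by some $z_n \in \Q(\Gamma_{\mathrm{div}})^* \setminus \Gamma_{\mathrm{div}}$.

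The main obstacle lies in preserving the $\Q(\Gamma_{\mathrm{div}})$-membership. A naive substitution $z_n = \sigma y_n$ for $\sigma \in \Gal(\overline{\Q}/\Q)$ sending $y_n$ into a good orbit typically escapes $\Q(\Gamma_{\mathrm{div}})$, since this field is not Galois over $\Q$ in general. A safer modification is multiplication by a suitable small-height element of $\Gamma_{\mathrm{div}}$, which preserves both $\Q(\Gamma_{\mathrm{div}})$-membership and non-membership in $\Gamma_{\mathrm{div}}$; however the induced action on the $H$-orbit of $y_n$ is rather rigid, consisting of a uniform rescaling of moduli together with a twist by a $\mu_\infty$-valued $1$-cocycle of $H$. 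The core technical challenge --- the heart of the reduction alluded to in Section \ref{section 1.5} --- is to show that this limited flexibility, perhaps enhanced by ratios or products of well-chosen Galois conjugates of $y_n$ that already lie in $\Q(\Gamma_{\mathrm{div}})^*$, suffices to push the $H$-orbit close to $S^1$ while simultaneously maintaining $h(z_n) \to 0$ and $z_n \notin \Gamma_{\mathrm{div}}$.
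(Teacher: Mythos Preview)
Your forward direction and overall strategy for the reverse direction (Bilu over a number field, then pigeonhole among $H$-orbits) are on the right track, but the proof is genuinely incomplete: you identify the obstacle and then stop, declaring it ``the core technical challenge.'' It is not. The paper resolves it in one line, and your proposed workarounds (multiplying by small-height elements of $\Gamma_{\mathrm{div}}$, taking ratios of conjugates) are red herrings.

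The missing idea is this: do not average over $\Gal(\overline{\Q}/\Q)$ but over $\Gal(\overline{\Q}/\Q(\mathcal{F}))$, where $\mathcal{F}=\{\alpha_1,\dots,\alpha_b\}$ generates the torsion-free part of $\Gamma$. Three things then fall into place simultaneously. First, $\Q(\Gamma_{\mathrm{div}})=\Q(\mu_\infty,\mathcal{F}^{1/n}:n\in\N)$ \emph{is} Galois over $\Q(\mathcal{F})$, since every conjugate of $\zeta_m$ or of $\alpha_i^{1/n}$ over $\Q(\mathcal{F})$ is again of that form; hence $z_n:=\sigma_n y_n$ stays in $\Q(\Gamma_{\mathrm{div}})^*$ for any $\sigma_n\in\Gal(\overline{\Q}/\Q(\mathcal{F}))$. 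Second, $\Q(\mathcal{F})$ is a number field, so Bilu still applies and your averaging argument goes through verbatim (the paper organizes this via the semidirect product $\Gal(\Q(\zeta_{m'_n},\mathcal{F}^{1/m_n})/\Q(\mathcal{F}))\simeq N_n\ltimes H_n$ and picks $\phi_n\in N_n$ maximizing the $H_n$-fraction near the unit circle; this is exactly your ``at least one $H$-orbit has good fraction''). Third, and this is what your twist-by-$\Gamma_{\mathrm{div}}$ manoeuvre could never guarantee, membership in $\Gamma_{\mathrm{div}}$ is preserved: if $\sigma_n y_n\in\Gamma_{\mathrm{div}}$ then $(\sigma_n y_n)^m\in\langle\mathcal{F}\rangle$ for some $m$, and since $\sigma_n$ fixes $\mathcal{F}$ pointwise, applying $\sigma_n^{-1}$ gives $y_n^m\in\langle\mathcal{F}\rangle\subset\Gamma$, contradicting $y_n\notin\Gamma_{\mathrm{div}}$.

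In short, replacing $\Q$ by $\Q(\mathcal{F})$ dissolves your obstacle entirely; no further construction is needed.
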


The length of an element $x\in \Q(\Gamma_\mathrm{div})$ is defined to be the smallest integer $l\in\N$ for which $x$ can express as $x= \sum_{j=1}^l x_j \gamma_j$ with $x_j\in K_\Gamma$ and $\gamma_j\in\Gamma_\mathrm{div}$. 
Each element of $\Gamma_\mathrm{div}$ has length $1$. 
For $N\in\N$, put $l_N(\Gamma)$ the set of elements of $\Q(\Gamma_\mathrm{div})$ with length $\leq N$. 

From Theorem \ref{thm0}, it is enough to prove the following two conjectures to deduce Conjecture \ref{conj 0} $(iii)$. 

\begin{conj} \label{conj 0.5}
Let $\Gamma\subset\overline{\Q}^*$ be a finitely generated subgroup, and let $(y_n)$ be a sequence of $\Q(\Gamma_{\mathrm{div}})^*$ such that $h(y_n)\to 0$ and $d_{\Gamma,\varepsilon}(y_n)\underset{n\to+\infty}{\longrightarrow} 1$ for all $\varepsilon>0$.
Then there exists $N\in\N$ such that $y_n\in l_N(\Gamma)$ for all $n$ large enough.  
\end{conj}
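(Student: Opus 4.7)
The plan is to argue by contradiction: assume that, after passing to a subsequence, $l(y_n)\to\infty$. For each $n$, fix a minimal expression $y_n = \sum_{j=1}^{l_n} x_{j,n}\gamma_{j,n}$ with $x_{j,n}\in K_\Gamma^*$ and $\gamma_{j,n}\in \Gamma_{\mathrm{div}}$. Minimality forces the $\gamma_{j,n}$ to be $K_\Gamma$-linearly independent, so the characters $\chi_{j,n}\colon\Gal(\overline{\Q}/K_\Gamma)\to\mu_\infty$ defined by $\sigma(\gamma_{j,n})=\chi_{j,n}(\sigma)\gamma_{j,n}$ are pairwise distinct, and the orbit $O_\Gamma(y_n)$ may be identified with the image of $(\chi_{1,n},\dots,\chi_{l_n,n})$ in the finite abelian group $\mu_\infty^{l_n}$.

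The main tool is Fourier orthogonality on this finite orbit. For the fixed archimedean embedding used to define $d_{\Gamma,\varepsilon}$,
\[ \frac{1}{\#O_\Gamma(y_n)} \sum_{\sigma} |\sigma y_n|^2 \;=\; \sum_{j=1}^{l_n} |x_{j,n}\gamma_{j,n}|^2, \]
because the cross terms carry the non-trivial characters $\chi_{j,n}\chi_{k,n}^{-1}$ and average to zero. Combining this with a uniform-integrability input that upgrades the density convergence $d_{\Gamma,\varepsilon}(y_n)\to 1$ to convergence of the mean, the left-hand side tends to $1$, hence $\sum_j |x_{j,n}\gamma_{j,n}|^2 \to 1$. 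Pushing the same computation through for the variance of $|\sigma y_n|^2$ yields $\sum_{j\ne k} |x_{j,n}\gamma_{j,n}|^2 |x_{k,n}\gamma_{k,n}|^2 \to 0$, and hence at this archimedean embedding exactly one summand $x_{j,n}\gamma_{j,n}$ has modulus tending to $1$ while the others have modulus tending to $0$.

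To finish, I would propagate this archimedean concentration from a single embedding to all archimedean places of $\Q(y_n)$, either by exploiting Galois-closure properties of $K_\Gamma$ or by applying the same orthogonality after pre-composing with automorphisms of $\overline{\Q}/\Q$, and then couple it, via the product formula together with $h(y_n)\to 0$, with non-archimedean information, so as to force a non-dominant summand $x_{j,n}\gamma_{j,n}$ to satisfy an exact multiplicative relation with the dominant summand in $\Gamma_{\mathrm{div}}$. Such a relation would contradict the $K_\Gamma$-linear independence of the $\gamma_{j,n}$ used to ensure minimality of $l_n$, yielding the desired contradiction.

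The main obstacle lies in this last step. The passage from ``small in modulus at every archimedean place'' to ``multiplicatively related in $\Gamma_{\mathrm{div}}$'' is precisely the Bogomolov/Lehmer-type input that R\'emond's conjecture is designed to provide, and it cannot follow from equidistribution alone. A secondary, more technical obstacle is that the cross-term vanishing in the variance computation is clean only when the characters $\chi_{j,n}$ are $\Z$-linearly independent; in the presence of multiplicative relations such as $\gamma_{j,n}^a = \gamma_{k,n}^b c$ with $c\in K_\Gamma^*$, extra diagonal-like contributions survive the averaging and must be controlled uniformly in $n$. A plausible route around these obstacles is an induction on the rank of $\Gamma$, splitting off one $\Q$-direction at a time and feeding the resulting smaller-rank statements back into the length bound.
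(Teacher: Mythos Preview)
The statement you are attempting to prove is not a theorem in the paper but an open \emph{conjecture} (labelled \texttt{conj 0.5}). The paper explicitly says that it focuses on the companion Conjecture~1.6 and leaves this one untouched; it also notes that the case $N=1$ of this conjecture is already equivalent to the full degree-one form of R\'emond's conjecture. So there is no proof in the paper to compare against, and your proposal should be read as a strategy for an open problem.

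As a strategy, it does not close, and you have correctly located the gap yourself: after concentrating the archimedean mass on a single summand, you need to convert ``small at every place'' into a multiplicative dependence in $\Gamma_{\mathrm{div}}$, and this is precisely the Lehmer/Bogomolov input that R\'emond's conjecture is supposed to provide. That is a genuine circularity, not a technical wrinkle. Your suggested induction on the rank of $\Gamma$ does not obviously break it, since the rank-one case already contains the full difficulty.

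It is worth noting that your first two steps --- the orthogonality identity for the average of $|\sigma y_n|^2$ over $O_\Gamma(y_n)$, and the conclusion that one summand has modulus tending to $1$ while the others tend to $0$ --- closely parallel what the paper \emph{does} prove, namely Theorem~8.1 (Theorem~1.7 in the introduction), which feeds into Conjecture~1.6 under the extra hypothesis $y_n\in l_N(\Gamma)$. The paper's argument there is more delicate than a straight character-orthogonality, for exactly the reason you flag as your ``secondary obstacle'': the characters $\chi_{j,n}$ need not be $\Z$-independent. The paper handles this via Kummer theory, a subsequence extraction (Section~7), and an equidistribution estimate (Section~6) that replaces exact orthogonality by an approximate version on large pieces of the Galois group. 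Crucially, all of the error terms there are controlled only because $N$ is fixed; with $l_n\to\infty$ the number of cross terms and the constants $\gamma,\theta$ in the paper's argument blow up, and the estimates do not close. Your variance step implicitly assumes uniform control of this kind, which is not available.
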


\begin{conj} \label{conj 1}
Let $N\in\N$, and let $\Gamma\subset\overline{\Q}^*$ be a finitely generated subgroup. 
Let $(y_n)$ be a sequence of $l_N(\Gamma)\backslash \{0\}$ such that $h(y_n)\to 0$ and $d_{\Gamma,\varepsilon}(y_n)\underset{n\to+\infty}{\longrightarrow} 1$ for all $\varepsilon>0$. 
 Then $y_n\in\Gamma_{\mathrm{div}}$ for all $n$ large enough.  
\end{conj}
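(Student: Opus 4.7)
The plan is to fix a minimal-length representation of each $y_n$ and to extract information from low-order moments of $|\sigma y_n|^2$ along its $\Gal(\overline{\Q}/K_\Gamma)$-orbit. After passing to a subsequence I may assume that $y_n$ has constant minimal length $L\leq N$, so $y_n = \sum_{j=1}^L c_{n,j}\gamma_{n,j}$ with $c_{n,j}\in K_\Gamma^*$ and $\gamma_{n,j}\in\Gamma_\mathrm{div}$. Minimality forces the $\gamma_{n,j}$ to lie in pairwise distinct classes modulo $\Gamma_\mathrm{div}\cap K_\Gamma^*$, for otherwise two terms could be merged into one. For each $\sigma\in\Gal(\overline{\Q}/K_\Gamma)$ one has $\sigma\gamma_{n,j} = \chi_{n,j}(\sigma)\gamma_{n,j}$ with $\chi_{n,j}(\sigma)\in\mu_\infty$ depending only on the class of $\gamma_{n,j}$; by distinctness the characters $\chi_{n,j}\chi_{n,k}^{-1}$ with $j\neq k$ are non-trivial on the finite quotient of $\Gal(\overline{\Q}/K_\Gamma)$ through which the action on $O_\Gamma(y_n)$ factors.

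Fix an embedding $\overline{\Q}\hookrightarrow\C$. Character orthogonality gives the second moment
\[ M_2(y_n) := \frac{1}{\#O_\Gamma(y_n)}\sum_{\sigma}|\sigma y_n|^{2} = \sum_{j=1}^L|c_{n,j}\gamma_{n,j}|^2. \]
The concentration hypothesis $d_{\Gamma,\varepsilon}(y_n)\to 1$ for every $\varepsilon > 0$, together with an upper bound on $\max_\sigma|\sigma y_n|$ drawn from $h(y_n)\to 0$, would yield $M_2(y_n)\to 1$ and, by the same mechanism, $M_4(y_n)\to 1$. Expanding $M_4$ via character orthogonality and comparing with $M_2^2$ through a Cauchy--Schwarz argument squeezes all but one of the non-negative terms $|c_{n,j}\gamma_{n,j}|^2$ to zero; after a further subsequence the length reduces to $L = 1$.

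The remaining length-one case is $y_n = c_n\gamma_n$ with $c_n\in K_\Gamma^*$ and $\gamma_n\in\Gamma_\mathrm{div}$, so that $y_n\in\Gamma_\mathrm{div}$ is equivalent to $c_n\in\Gamma_\mathrm{div}$. All $K_\Gamma$-conjugates of $y_n$ share the common complex absolute value $|c_n\gamma_n|$, hence the hypothesis forces $|c_n\gamma_n|\to 1$; combined with $h(y_n)\to 0$, this would be handled by iteratively dividing by $\gamma_n$ and exploiting the divisible structure of $\Gamma_\mathrm{div}$ to track the remainder. The main obstacle is the quantitative moment step: converting the purely qualitative statement $d_{\Gamma,\varepsilon}(y_n)\to 1$ into the convergence $M_4(y_n)\to 1$ requires a uniform upper bound on $|\sigma y_n|$ derived from $h(y_n)\to 0$, and this bound must interact cleanly with the $L$-term structure. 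A preliminary normalization of $y_n$ by a well-chosen element of $\Gamma_\mathrm{div}$ may be needed, but it has to be performed without destroying either the height or the concentration hypothesis.
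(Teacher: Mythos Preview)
The statement you are attempting is labelled as a \emph{conjecture} in the paper and is not proved there in general; only the case $N=1$ (Lemma~\ref{lmm 20}, due to R\'emond) and the case $\Gamma\subset U$ (Corollary~\ref{cor 2}) are established. The paper's contributions toward the full conjecture are Theorem~\ref{thm 2}, which upgrades $d_{\Gamma,\varepsilon}(y_n)\to 1$ to $d_{\Gamma,\varepsilon}(y_n)=1$ for all large $n$, and Theorem~\ref{thm 1}, which yields the conclusion under the extra hypothesis $y_n\in U(\Gamma)$. In fact the paper does prove, via a long equidistribution argument (Theorem~\ref{thm 5}), exactly the moment-type conclusions you are aiming for: $\sum_j|z_{j,n}|^2\to 1$ and $z_{j,n}\to 0$ for all but one $j$. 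But it uses these only to obtain Theorem~\ref{thm 2}, not the conjecture itself.

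Your proposal has a genuine gap at the step ``after a further subsequence the length reduces to $L=1$''. Even if the moment argument succeeds and yields $|c_{n,j}\gamma_{n,j}|\to 0$ for every $j\neq j_0$, this says nothing about any fixed $y_n$: the terms $c_{n,j}\gamma_{n,j}$ with $j\neq j_0$ remain non-zero, and the algebraic length of $y_n$ is still $L$. The conclusion $y_n\in\Gamma_{\mathrm{div}}$ is a statement about each individual $y_n$, and asymptotic smallness of some summands in one complex embedding cannot force it. This is precisely why the paper, having proved the same limits, still does not deduce Conjecture~\ref{conj 1} and must impose $y_n\in U(\Gamma)$ instead. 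A secondary issue is that your $M_4$ expansion tacitly assumes $\chi_{n,j}\overline{\chi_{n,k}}\chi_{n,l}\overline{\chi_{n,m}}$ is trivial only when $\{j,l\}=\{k,m\}$; multiplicative relations among the $\gamma_{n,j}$ in $\Gamma_{\mathrm{div}}/(\Gamma_{\mathrm{div}}\cap K_\Gamma^*)$ produce extra surviving cross-terms, which the paper has to handle carefully in the proof of Theorem~\ref{thm 5}. On the positive side, the obstacle you flag is milder than you suggest: the a priori bound $|\sigma y_n|^2\le L\sum_j|c_{n,j}\gamma_{n,j}|^2=L\,M_2(y_n)$ lets one bootstrap $M_2(y_n)\to 1$ and $M_4(y_n)\to 1$ directly from $d_{\Gamma,\varepsilon}(y_n)\to 1$, with no appeal to $h(y_n)\to 0$.
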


Conjecture \ref{conj 1} with $N=1$ was solved by R\'emond, see Lemma \ref{lmm 20} below.
Conjecture \ref{conj 0} $(iii)$ and Conjecture \ref{conj 0.5} with $N=1$ are therefore equivalent. 
However, this does not remove the interest of Conjecture \ref{conj 1} because Conjecture \ref{conj 0.5} could be easier to show when $N$ is large.

Both conjectures seem to be treated separately. 
Hence we focus in this article on the second one only.

For a set $X$ of algebraic numbers, write $X.\Gamma_{\mathrm{div}}$ for the set of $x\gamma$ with $x\in X$ and $\gamma\in \Gamma_{\mathrm{div}}$. 
Note that $l_1(\Gamma)= K_\Gamma.\Gamma_{\mathrm{div}}$. 
The rank of an abelian group $G$ is given by the maximal number of linearly independent elements in $G$. 
Note that $\Gamma$ and $\Gamma_{\mathrm{div}}$ have the same rank. 

\begin{lmm} [R\'emond] \label{lmm 20}
Let $\Gamma\subset\overline{\Q}^*$ be a finitely generated subgroup, and let $F/K_\Gamma$ be a finite extension.
Then small points of $F^*.\Gamma_{\mathrm{div}}$ lie in $\Gamma_{\mathrm{div}}$. 
\end{lmm}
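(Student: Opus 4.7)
\emph{Proof plan.} The idea is to reduce to the Bogomolov theorem of Amoroso--Zannier for $L^{\mathrm{ab}}$ \cite{AmorosoZannier10} and use Kummer theory to handle the $\Gamma_{\mathrm{div}}$-part.

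Since $\Gamma$ is finitely generated and $F/K_\Gamma$ is finite, one can find a number field $L\supset\Gamma$ with $F\subset L(\mu_\infty)\subset L^{\mathrm{ab}}$ (take $L=L_0(\theta)$ for any number field $L_0\supset\Gamma$ and $\theta$ a primitive element of $F/K_\Gamma$); it therefore suffices to prove the lemma with $F$ replaced by $L^{\mathrm{ab}}$. Let $\alpha=x\gamma\in (L^{\mathrm{ab}})^*.\Gamma_{\mathrm{div}}\setminus\Gamma_{\mathrm{div}}$ with $x\in (L^{\mathrm{ab}})^*$ and $\gamma\in\Gamma_{\mathrm{div}}$; necessarily $x\notin\mu_\infty$, for otherwise $\alpha\in\mu_\infty\Gamma_{\mathrm{div}}=\Gamma_{\mathrm{div}}$. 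Set $n:=[L^{\mathrm{ab}}(\alpha):L^{\mathrm{ab}}]$, which by Kummer theory (as $\mu_\infty\subset L^{\mathrm{ab}}$) equals the order of the class of $\gamma$ in the torsion divisible group $V:=\Gamma_{\mathrm{div}}/((L^{\mathrm{ab}})^*\cap\Gamma_{\mathrm{div}})$. Then $\alpha^n\in (L^{\mathrm{ab}})^*\setminus\mu_\infty$, and Amoroso--Zannier yields $h(\alpha^n)\geq c_L>0$, hence $h(\alpha)\geq c_L/n$. This already settles the lemma whenever $n$ is bounded uniformly on the set in question.

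In the remaining case $n\to\infty$ I would argue by contradiction: given a sequence $(\alpha_k)$ in $(L^{\mathrm{ab}})^*.\Gamma_{\mathrm{div}}\setminus\Gamma_{\mathrm{div}}$ with $h(\alpha_k)\to 0$ and $n_k\to\infty$, pick $\gamma_k$ of minimum height inside its coset of $V$. When this minimum tends to $0$ -- as happens for cosets of ``pure Kummer'' type $[g_i^{1/n_k}]$, whose minimum is $O(1/n_k)$ -- one obtains $h(x_k)\leq h(\alpha_k)+h(\gamma_k)\to 0$, and Amoroso--Zannier then forces $x_k\in\mu_\infty$, giving $\alpha_k\in\Gamma_{\mathrm{div}}$, a contradiction.

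The main obstacle is that a ``typical'' coset of large order in $V$ has minimum representative of height bounded below by a positive constant rather than $O(1/n_k)$: these are classes whose image, under the natural map from $(\Q/\Z)^r$ arising from a basis $g_1,\ldots,g_r$ of $\Gamma$ modulo torsion, has a component of small but fixed denominator, such as $(1/2,1/n_k,0,\dots,0)$. A way around this is to enlarge $L$ at the outset so that it contains $\Q(\mu_M)$ for a suitably large $M$ (harmless since $\mu_\infty\subset K_\Gamma\subset F$): once $\mu_M\subset L$, Kummer theory gives $g_i^{1/m}\in L^{\mathrm{ab}}$ for every $m\leq M$ and every generator $g_i$, so the ``small-order'' components are absorbed into $(L^{\mathrm{ab}})^*\cap\Gamma_{\mathrm{div}}$ and the residual large-order cosets acquire minimum representative of height $O(1/M)$. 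The delicate point is balancing $M$ against the Amoroso--Zannier constant $c_L$ (which itself depends on $[L:\Q]$): one needs $O(1/M)<c_L/2$ after the enlargement, and this compatibility likely requires an effective version of Amoroso--Zannier tracking dependence on the base field.
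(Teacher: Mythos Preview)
Your proposal has a genuine gap that you yourself flag: the case $n_k\to\infty$ with coset representatives of height bounded away from $0$ is not closed. Your suggested fix --- enlarging $L$ to contain $\mu_M$ --- does not resolve it. The constant $c_L$ in Amoroso--Zannier depends on $[L:\Q]$, and you need $\min_k h(\gamma_k)\lesssim 1/M$ while simultaneously $c_{L(\mu_M)}$ stays bounded below; as you note, this would require effective control of $c_L$ in terms of $[L:\Q]$, which the standard statement does not provide. So the argument, as it stands, is a sketch that stops short of a proof.

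The paper sidesteps this entirely by invoking a result of R\'emond (recorded as \cite[Corollary~2.3]{Pottmeyer}) which gives directly what you were trying to build by hand: setting $h_\Gamma(a)=\min\{h(a\gamma):\gamma\in\Gamma_{\mathrm{div}}\}$, one has $h_\Gamma(a)\geq c>0$ for every $a\in F^*\setminus\Gamma_{\mathrm{div}}$. The key input making this work is that $\Gamma_{\mathrm{div}}\cap F^*$ has the same (finite) rank as $\Gamma$. Once this is in hand the lemma is one line: if $y=a\gamma$ with $a\in F^*$, $\gamma\in\Gamma_{\mathrm{div}}$ and $h(y)<c$, then $h_\Gamma(a)\leq h(a\gamma)=h(y)<c$, forcing $a\in\Gamma_{\mathrm{div}}$ and hence $y\in\Gamma_{\mathrm{div}}$. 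Your bound $h(\alpha)\geq c_L/n$ is the ``wrong direction'': it degrades with $n$, whereas R\'emond's bound on $h_\Gamma$ is uniform because it already optimises over the $\Gamma_{\mathrm{div}}$-translate. In effect you were attempting to re-derive R\'emond's corollary from Amoroso--Zannier alone; that is a nontrivial statement in its own right, and the balancing obstruction you hit is a genuine one.
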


\begin{proof}
By Amoroso and Zannier's result \cite{AmorosoZannier10}, small points of $F^*$ lie in $\mu_\infty\subset \Gamma_{\mathrm{div}}$. 
 By assumption, $\Gamma \subset \Gamma_{\mathrm{div}} \cap F^*\subset \Gamma_{\mathrm{div}}$. 
Thus $\Gamma, \Gamma_{\mathrm{div}}\cap F^*$ and $\Gamma_{\mathrm{div}}$ all have the same rank, which is finite since $\Gamma$ is finitely generated.
For $a\in F$, set \[h_{\Gamma}(a) = \min\{h(a\gamma), \gamma\in \Gamma_{\mathrm{div}}\}.\]
Thanks to a result of R\'emond \cite[Corollary 2.3]{Pottmeyer}, there is a positive constant $c$ such that $h_{\Gamma}(a)\geq c$ for all $a\in F^*\backslash \Gamma_{\mathrm{div}}$. 

Let $y= a\gamma\in F^*.\Gamma_{\mathrm{div}}$, where $a\in F^*$ and $\gamma\in \Gamma_{\mathrm{div}}$, such that $h(y)<c$. 
As $c > h(y)\geq h_{\Gamma}(a)$, we get $a\in \Gamma_{\mathrm{div}}$ by the foregoing. 
The lemma follows.
\end{proof}

Among all our theorems, the next one is the most difficult (and technical) to show. 
Hence we will prove it at the end of this paper, that is in Section \ref{section 5}. 

\begin{thm} \label{thm 2}
Let $\Gamma\subset \overline{\Q}^*$ be a finitely generated subgroup, let $N\in\N$, and let $(y_n)$ be a sequence of $l_N(\Gamma)$.
Assume that $d_{\Gamma, \varepsilon}(y_n)\underset{n\to+\infty}{\longrightarrow} 1$ for all $\varepsilon>0$. 
Then for all $\varepsilon>0$, we have $d_{\Gamma, \varepsilon}(y_n)=1$ for all $n$ large enough.
\end{thm}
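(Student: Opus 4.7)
The plan is a contradiction argument combining the infinite pigeonhole principle, Kummer theory over $K_\Gamma$, and the discreteness of the values $d_{\Gamma,\varepsilon}$ on subsequences of bounded Galois orbits. Suppose the conclusion fails: there exist $\varepsilon_0>0$ and an infinite subsequence $(n_k)$ with $d_{\Gamma,\varepsilon_0}(y_{n_k})<1$ (still converging to $1$). For each $n$, fix a representation $y_n=\sum_{j=1}^{N_n}x_{n,j}\gamma_{n,j}$ realizing the length of $y_n$, so $N_n\le N$, $x_{n,j}\in K_\Gamma$ and $\gamma_{n,j}\in\Gamma_{\mathrm{div}}$; after a first pigeonhole on $N_n\in\{1,\ldots,N\}$, assume $N_n=N'$ is constant.

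Because $\gamma_{n,j}^{m_{n,j}}\in\Gamma\subset K_\Gamma^*$ for some $m_{n,j}\in\N$ and $\mu_\infty\subset K_\Gamma$ is pointwise fixed, every $\sigma\in\Gal(\overline{\Q}/K_\Gamma)$ satisfies $\sigma\gamma_{n,j}=\zeta_j(\sigma)\gamma_{n,j}$ with $\zeta_j(\sigma)\in\mu_\infty$. Introduce the sublattice $\Lambda_n:=\{\vec e\in\Z^{N'}:\prod_j\gamma_{n,j}^{e_j}\in K_\Gamma^*\}$; it contains each $m_{n,j}\mathbf{e}_j$, hence has full rank. Kummer theory over $K_\Gamma$ identifies, via $\sigma\mapsto(\zeta_j(\sigma))_j$, the Galois group $\Gal(K_\Gamma(\gamma_{n,1},\ldots,\gamma_{n,N'})/K_\Gamma)$ with the finite group $H_n:=\{\zeta\in(S^1)^{N'}:\zeta_1^{e_1}\cdots\zeta_{N'}^{e_{N'}}=1,\ \forall \vec e\in\Lambda_n\}$ of order $[\Z^{N'}:\Lambda_n]$. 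Since $y_n$ belongs to $K_\Gamma(\gamma_{n,1},\ldots,\gamma_{n,N'})$, its Galois orbit over $K_\Gamma$ has size at most $[\Z^{N'}:\Lambda_n]$.

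Full-rank sublattices of $\Z^{N'}$ are parameterized by integer matrices in Hermite normal form, and so form a countable set; by infinite pigeonhole we extract a further subsequence on which $\Lambda_n=\Lambda$ is constant. Along this sub-subsequence the Galois orbit size of $y_n$ is bounded by the fixed integer $M:=[\Z^{N'}:\Lambda]$, whence $d_{\Gamma,\varepsilon_0}(y_n)$ lies in the finite set $\{p/q:0\le p\le q\le M\}\subset[0,1]$. A convergent sequence in a finite subset of $\R$ is eventually constant; since the limit equals $1$ (which is isolated in the finite set, the next smaller value being $\le(M-1)/M$), one concludes $d_{\Gamma,\varepsilon_0}(y_n)=1$ for all sufficiently large indices, contradicting $d_{\Gamma,\varepsilon_0}(y_{n_k})<1$ for all $k$.

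The main obstacle is the Kummer-theoretic identification of the second paragraph: one must verify that $\Lambda_n$ captures all multiplicative Galois constraints on the tuple $(\gamma_{n,1},\ldots,\gamma_{n,N'})$ and that Kummer duality indeed pairs $\Gal$ with $H_n$ bijectively, so that the orbit of $y_n$ is bounded by $[\Z^{N'}:\Lambda_n]$. A secondary concern is that the representation of $y_n$ is not canonical and neither is $\Lambda_n$; however the pigeonhole step requires only one choice per $n$, and any such choice yields the same contradiction. If the author's proof in Section~\ref{section 5} is substantially more involved, it is likely because an effective or quantitative version is sought — namely explicit bounds on how large $n$ must be in terms of $\varepsilon$ — rather than the purely qualitative statement treated above.
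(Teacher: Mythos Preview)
There is a genuine and fatal gap in your pigeonhole step. You argue that since full-rank sublattices of $\Z^{N'}$ form a countable set, the infinite pigeonhole principle yields a subsequence on which $\Lambda_n$ is constant. This is false: the infinite pigeonhole principle requires a \emph{finite} target set. A sequence in a countably infinite set need not have any constant subsequence (consider $n\mapsto n$). Concretely, take $\Gamma=\langle 2\rangle$ and any sequence where the representation involves $\gamma_{n,1}=2^{1/m_n}$ with $m_n\to\infty$; then $\Lambda_n=m_n\Z$ and no subsequence has constant lattice. In the setting of the theorem, this is precisely the generic situation: the orbit sizes $\#O_\Gamma(y_n)$ are typically \emph{unbounded}, and this is exactly what makes the result nontrivial. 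Your argument correctly dispatches the case of bounded orbit size, but that case is essentially immediate and is not where the content lies.

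The paper's proof in Sections~\ref{section 2}--\ref{section 5} is long for a substantive reason, not merely for effectivity. Once the orbit sizes are unbounded, one cannot appeal to discreteness of $d_{\Gamma,\varepsilon}$ at all. Instead, one passes (Lemma~\ref{cor 3} and Lemma~\ref{lmm 521}) to a carefully prepared subsequence in which the exponents of the $\gamma_{n,j}$ are organized into a strict part (tending to infinity in a controlled way) and a bounded part, then uses an equidistribution result on the torus (Proposition~\ref{prop 1}, Corollary~\ref{lmm 9}) to show that the Galois action samples the angles $\zeta_{m_n}^{\mathbf{r}\cdot\mathbf{K}}$ densely enough to force first $\sum_j|z_{j,n}|^2\to 1$ and then all but one $z_{j,n}\to 0$ (Theorem~\ref{thm 5}). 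The conclusion $d_{\Gamma,\varepsilon}(y_n)=1$ for large $n$ then follows by choosing the worst conjugate $x_n\in O_\Gamma(y_n)$ and applying this analysis to it. None of this machinery can be replaced by a pigeonhole on lattices.
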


\begin{rqu}
\rm{The proof of this theorem does not hold if we study the Galois conjugates of $y_n$ over $\Q$ (or $\Q(\Gamma)$) instead of $K_\Gamma$. 
The reason is that we will use Kummer theory and for this, it is primordial that our ground field contains all roots of unity.}
\end{rqu}

Theorem \ref{thm 2} means that if "most of" elements in $O_\Gamma(y_n)$ are "close" to the unit circle, then they all are. 
Note that this theorem holds regardless of the value of $h(y_n)$, which leads to the following natural question: How are the elements of $O_\Gamma(y_n)$ scattered around the unit circle when $h(y_n)$ is small enough ?  
We predict they are concyclic and located on a circle centered at the origin. 
If it checks out, then Conjecture \ref{conj 1} would immediately fall thanks to our next theorem, a proof of which is given in Section \ref{section X}.

Denote by $U(\Gamma)$ the set of algebraic numbers $\alpha$ such that the elements of $O_\Gamma(\alpha)$ are concyclic and located on a circle centered at the origin.  
Check that $\Gamma_\mathrm{div}$ is a subgroup of $U(\Gamma)$: Let $x\in\Gamma_{\mathrm{div}}$, and let $\sigma\in \Gal(\overline{\Q}/K_\Gamma)$. 
By definition of the division group, we have $x^n\in \Gamma$ for some $n\in\Z\backslash \{0\}$. 
Thus $\sigma(x^n)=x^n$ implying $\vert \sigma x\vert = \vert x\vert$; whence $x\in U(\Gamma)$.

\begin{thm} \label{thm 1}
Let $\Gamma\subset \overline{\Q}^*$ be a finitely generated subgroup, and let $N\in\N$.
Then small points of $l_N(\Gamma)\cap U(\Gamma)$ lie in $\Gamma_{\mathrm{div}}$. 
\end{thm}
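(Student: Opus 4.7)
The plan is to invoke R\'emond's Lemma \ref{lmm 20} after showing that every $\alpha\in l_N(\Gamma)\cap U(\Gamma)$ lies in $F^*\cdot\Gamma_{\mathrm{div}}$ for a finite extension $F/K_\Gamma$ whose degree is controlled purely by $N$. Given $\alpha\neq 0$ in $l_N(\Gamma)\cap U(\Gamma)$, I would first choose a minimal writing $\alpha=\sum_{j=1}^M x_j\gamma_j$ with $M\leq N$, $x_j\in K_\Gamma^*$ and $\gamma_j\in\Gamma_{\mathrm{div}}$; minimality forces the classes $[\gamma_j]$ in $V:=\overline{\Q}^*/K_\Gamma^*$ to be pairwise distinct (otherwise two terms could be combined). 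Since $\gamma_j^{n_j}\in\Gamma\subset K_\Gamma^*$ for some $n_j$, every $[\gamma_j]$ has finite order in $V$; hence $H:=\langle[\gamma_1],\ldots,[\gamma_M]\rangle$ is a finite subgroup of $V$, and Kummer theory identifies $\Gal(K_\Gamma(\gamma_1,\ldots,\gamma_M)/K_\Gamma)$ with the dual group $\widehat H$.

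Next, fixing an embedding $\overline{\Q}\hookrightarrow\C$ and writing $\sigma\gamma_j=\varphi_\sigma([\gamma_j])\gamma_j$ with $\varphi_\sigma\in\widehat H$, one computes
\[|\sigma\alpha|^2=\sum_{u\in H} a_u\,\varphi_\sigma(u),\qquad a_u:=\sum_{(j,k):\,[\gamma_j/\gamma_k]=u}x_j\overline{x_k}\,\gamma_j\overline{\gamma_k}.\]
The hypothesis $\alpha\in U(\Gamma)$ asks for $|\sigma\alpha|^2=|\alpha|^2$ for every $\sigma$; since $\varphi_\sigma$ ranges over all of $\widehat H$, orthogonality of characters yields $a_u=0$ for every $u\in H\setminus\{1\}$.

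The main step, and the principal obstacle, is to deduce from these relations together with the minimality of $M$ a uniform bound $|H|\leq D(N)$. The key combinatorial point is that if some $u\neq 1$ were produced by a single pair $(j,k)$, then $a_u=0$ would force $x_jx_k=0$, contradicting minimality for $M\geq 2$; hence every nontrivial quotient $[\gamma_j/\gamma_k]$ is produced by at least two pairs, a strong collision restriction on the set $\{[\gamma_j]\}\subset V$. Through an additive-combinatorics argument in the spirit of Pl\"unnecke-Ruzsa and Freiman-type theorems for finite abelian groups, this restriction should bound the order of the subgroup $H$ generated by $\{[\gamma_j]\}$ in terms of $M\leq N$.

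Once $[K_\Gamma(\gamma_1,\ldots,\gamma_M):K_\Gamma]=|H|\leq D(N)$, set $F:=K_\Gamma(\gamma_1,\ldots,\gamma_M)$; then $\alpha\in F^*\subseteq F^*\cdot\Gamma_{\mathrm{div}}$. The Amoroso--Zannier and R\'emond bounds underlying Lemma \ref{lmm 20} are Dobrowolski-type and depend on $F$ only through $[F:K_\Gamma]$, so they give a positive lower bound on $h(\alpha)$ for $\alpha\in F^*\cdot\Gamma_{\mathrm{div}}\setminus\Gamma_{\mathrm{div}}$ that is uniform as $F$ ranges over extensions of $K_\Gamma$ of degree at most $D(N)$, completing the proof.
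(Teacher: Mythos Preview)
Your overall strategy---show $l_N(\Gamma)\cap U(\Gamma)\subset F^*\cdot\Gamma_{\mathrm{div}}$ for a finite extension $F/K_\Gamma$ depending only on $N$, then invoke Lemma~\ref{lmm 20}---is exactly the paper's route (this is Proposition~\ref{lmm LL}). The orthogonality step is also morally the same as the paper's: expanding $|\sigma\alpha|^2$ and reading off that the ``off-diagonal'' Fourier coefficients $a_u$ vanish is precisely what underlies Lemma~\ref{lmm Fischler}. Where your argument breaks is the passage from $a_u=0$ to a bound on~$|H|$.

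First, the target $|H|\le D(N)$ as you state it is simply false. Take $M=1$: then $\alpha=x_1\gamma_1\in U(\Gamma)$ automatically, the system of relations $a_u=0$ is empty, and $|H|=\mathrm{ord}([\gamma_1])$ can be made arbitrarily large (e.g.\ $\gamma_1=\alpha_1^{1/n}$). More generally, multiplying any $\alpha\in l_N(\Gamma)\cap U(\Gamma)$ by a $\gamma\in\Gamma_{\mathrm{div}}$ of enormous order in $\overline{\Q}^*/K_\Gamma^*$ keeps it in $l_N(\Gamma)\cap U(\Gamma)$ (Lemma~\ref{lmm meme longiueur}) while inflating $|H|$. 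So one must first translate by a suitable element of $\Gamma_{\mathrm{div}}$; the paper does this explicitly (the factor $z$ in the proof of Proposition~\ref{lmm LL}) before attempting any bound.

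Second, even after such a translation, the justification you offer---``every nontrivial quotient is produced by at least two pairs, hence Pl\"unnecke--Ruzsa/Freiman bounds $|H|$''---does not go through. The $\ge 2$-representations condition is only a consequence of the much stronger linear relations $a_u=0$, and by itself it does \emph{not} bound the order of the group generated by the $[\gamma_j]$'s (one can cook up $S\subset H$ with $|S|=M$ small, every nonzero element of $S-S$ hit at least twice, yet $\langle S\rangle=H$ of arbitrary size). What actually works is an extremal/Vandermonde argument using the full system $a_u=0$: this is the content of Section~4, where the paper reduces to a cyclic quotient, applies Dirichlet's simultaneous approximation to obtain small integer exponents, and then uses the ordering of $\Z$ (Lemma~\ref{lmm Fischler}) to force a contradiction unless the relevant denominator is bounded by $4^N N^2$. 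This combination---translation, Dirichlet, and the extremal pair $(c_{i_0},c_{j_0})$---is the missing engine, and it is not supplied by sumset inequalities.

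Finally, your last paragraph asserts that the constant in Lemma~\ref{lmm 20} depends on $F$ only through $[F:K_\Gamma]$. That is not what Lemma~\ref{lmm 20} says, and you would need to argue it separately. The paper sidesteps this entirely by producing a \emph{single} field $F=K_\Gamma(\mathcal{F}^{1/m})$ containing (a $\Gamma_{\mathrm{div}}$-translate of) every element of $l_N(\Gamma)\cap U(\Gamma)$, so Lemma~\ref{lmm 20} is applied once.
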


End this section by providing a collection of $\Gamma$ for which small points of $l_N(\Gamma)\backslash \{0\}$ lie in $U(\Gamma)$, thus giving credit to our approach to attack Conjecture \ref{conj 0} $(iii)$. 

A CM-field is a totally imaginary quadratic extension of a totally real field. 
The maximal totally real field extension $\Q^{tr}$ of $\Q$ having only one quadratic extension, namely $\Q^{tr}(i)$, the CM-fields are therefore the subfields of $\Q^{tr}(i)$ that are not totally real. 
In particular, a compositum of CM-fields is a CM-field.
The classification of CM-number fields was made in \cite{Blanksby}; these are the fields of the form $\Q(\alpha)$ with $\alpha\in U\backslash \{\pm 1\}$, where $U$ denotes the set of algebraic numbers with all its conjugates over $\Q$ on the unit circle.
Note that if $\Gamma\subset U$, then $\Gamma_{\mathrm{div}}\subset U$. 
The field $\Q(\Gamma_{\mathrm{div}})$ is therefore a CM-field since it is the compositum of all $\Q(\alpha)$ with $\alpha\in\Gamma_{\mathrm{div}}\backslash\{\pm 1\}$. 

\begin{cor} \label{cor 2}
Let $\Gamma\subset U$ be a finitely generated subgroup, and let $N\in\N$. 
Then small points of $l_N(\Gamma)\backslash \{0\}$ lie in $\Gamma_{\mathrm{div}}$. 
\end{cor}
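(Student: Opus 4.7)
The plan is to invoke Theorem \ref{thm 1}: it suffices to show that every $y\in l_N(\Gamma)\setminus\{0\}$ of sufficiently small height lies in $U(\Gamma)$. The structural input is that $\Gamma\subset U$ forces $\Gamma_{\mathrm{div}}\subset U$, so $\Q(\Gamma_{\mathrm{div}})$ is a CM-field whose CM-involution coincides with complex conjugation; I denote the latter by $c$. One then has $c(\gamma)=\gamma^{-1}$ for $\gamma\in\Gamma_{\mathrm{div}}$, the subfield $K_\Gamma$ is $c$-stable, and $c$ commutes with every embedding $\sigma:\Q(\Gamma_{\mathrm{div}})\hookrightarrow\C$, in the sense that $\sigma(c(w))=\overline{\sigma(w)}$ for all $w\in\Q(\Gamma_{\mathrm{div}})$.

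For $y=\sum_{j=1}^N x_j \gamma_j\in l_N(\Gamma)$ with $x_j\in K_\Gamma$ and $\gamma_j\in\Gamma_{\mathrm{div}}$, the element $c(y)=\sum_j c(x_j)\gamma_j^{-1}$ again belongs to $l_N(\Gamma)$, and expanding the product yields $z:=y\,c(y)\in l_{N^2}(\Gamma)$. For every embedding $\sigma$ one has $\sigma(z)=\sigma(y)\overline{\sigma(y)}=|\sigma y|^2$, so $z$ is totally non-negative real, and $h(z)\leq 2h(y)$. The decisive observation is that $y\in U(\Gamma)$ if and only if $z\in K_\Gamma$: both conditions say exactly that $\sigma(z)=z$ for every $\sigma\in\Gal(\overline{\Q}/K_\Gamma)$.

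To derive $z\in K_\Gamma$ when $h(y)$ is small, I would run Theorem \ref{thm 2} on $z$. Given $(y_n)\subset l_N(\Gamma)\setminus\{0\}$ with $h(y_n)\to 0$, set $z_n:=y_n\,c(y_n)$. Bilu's equidistribution theorem, applicable once one passes to a subsequence with $[\Q(z_n):\Q]\to\infty$ (the bounded-degree case being handled directly by Northcott, since a totally non-negative real of height $0$ equals $1$), shows that the $\Q$-conjugates of $z_n$ cluster near the unit circle; being non-negative reals, they actually cluster at $+1$. A relative equidistribution analysis of the $\Q$-orbit of $z_n$ as a disjoint union of $K_\Gamma$-orbits — possibly after enlarging $\Gamma$ to a Galois-stable finitely generated subgroup of $U$ so that $K_\Gamma/\Q$ is Galois and the orbits decompose uniformly — then yields $d_{\Gamma,\varepsilon}(z_n)\to 1$ for every $\varepsilon>0$. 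Theorem \ref{thm 2} applied to $(z_n)\subset l_{N^2}(\Gamma)$ now forces $d_{\Gamma,\varepsilon}(z_n)=1$ for $n$ sufficiently large, i.e., every $K_\Gamma$-conjugate of $z_n$ lies arbitrarily close to $1$.

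The last and hardest step is to upgrade this proximity to the exact equality $\sigma(z_n)=z_n$. My plan is to exploit a Schinzel-type height lower bound for the totally positive element $\sigma(z_n)/z_n\in\Q(\Gamma_{\mathrm{div}})^+$: it has height at most $4h(y_n)\to 0$ and all its conjugates accumulate at $1$, so if it differs from $1$ we target a contradiction with a uniform lower bound. Since no integrality is \emph{a priori} available, this is where I anticipate the main obstacle; overcoming it will likely need a preliminary denominator-clearing reduction on the coefficients $x_j$, or a refined length-based argument combining R\'emond's relative estimate \cite[Corollary 2.3]{Pottmeyer} at a suitable finite intermediate level with the bounded length of $z_n$ over $K_\Gamma$. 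Once $z\in K_\Gamma$ is established, $y\in U(\Gamma)$ follows at once and Theorem \ref{thm 1} yields $y\in\Gamma_{\mathrm{div}}$.
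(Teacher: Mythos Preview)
Your approach is far more complicated than necessary and leaves genuine gaps, whereas the paper's proof is a three-line application of Schinzel's theorem.

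The paper observes that since $\Gamma\subset U$, the field $\Q(\Gamma_{\mathrm{div}})$ is CM and hence contained in $\Q^{tr}(i)$. Schinzel's theorem \cite[Theorem 2]{Schinzel} (see also \cite{Pottmeyer2}) asserts that small points of $\Q^{tr}(i)^*$ lie in $U$; in particular, small points of $l_N(\Gamma)\setminus\{0\}\subset\Q(\Gamma_{\mathrm{div}})^*$ lie in $U$. As trivially $U\subset U(\Gamma)$, Theorem~\ref{thm 1} concludes.

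You share the endgame (apply Theorem~\ref{thm 1}) but attempt to reach $y\in U(\Gamma)$ through the auxiliary element $z=y\,c(y)$, Bilu equidistribution, and Theorem~\ref{thm 2}. Two steps are unresolved. First, Bilu controls the $\Q$-orbit of $z_n$, not its $K_\Gamma$-orbit; the mechanism of Lemma~\ref{lmm 0.5} only yields $d_{\Gamma,\varepsilon}(\sigma_n z_n)\to 1$ for a \emph{suitably chosen} conjugate, not for $z_n$ itself, and ``enlarging $\Gamma$'' does not bridge this. Second, even after Theorem~\ref{thm 2} delivers $d_{\Gamma,\varepsilon}(z_n)=1$, you must still upgrade ``every $K_\Gamma$-conjugate of $z_n$ lies in $[1-\varepsilon,1+\varepsilon]$'' to ``$z_n\in K_\Gamma$'' (equivalently $z_n=1$). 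You flag this as the main obstacle and worry about integrality---but Schinzel's bound for totally real numbers requires no integrality hypothesis: any totally real $\alpha\neq 0,\pm 1$ satisfies $h(\alpha)\geq\tfrac12\log\tfrac{1+\sqrt5}{2}$. Applied directly to the totally positive $z_n$ (of height $\leq 2h(y_n)\to 0$), this forces $z_n=1$ outright, rendering the entire equidistribution detour through Theorem~\ref{thm 2} superfluous. The ingredient you are missing at your hardest step is precisely the theorem the paper invokes at the start.
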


\begin{proof}
By a theorem of Schinzel \cite[Theorem 2]{Schinzel}, see also \cite{Pottmeyer2}, small points of $\Q^{tr}(i)^*$ lie in $U$.
As $\Gamma\subset U$, the field $\Q(\Gamma_{\mathrm{div}})$ is a CM-field; whence $\Q(\Gamma_{\mathrm{div}})\subset \Q^{tr}(i)$. 
Small points of $l_N(\Gamma)\backslash \{0\}$ therefore lie in $U$.
Corollary \ref{cor 2} now arises from Theorem \ref{thm 1} since $U \subset U(\Gamma)$ by definition. 
\end{proof}

Conjecture \ref{conj 0} $(iii)$ and Conjecture \ref{conj 0.5} are therefore equivalent for CM-fields.
This corollary is the second partial result going in the direction of Conjecture \ref{conj 0} $(iii)$, and the first one of this form. 

\subsection*{Acknowledgement} I thank Y. Bilu and M. Sombra for their helpful advice; F. Amoroso, L. Pottmeyer, G. R\'emond and the anonymous referees for their relevant suggestions and S. Fischler for his help concerning Lemma \ref{lmm Fischler}.
This work was funded by Morningside Center of Mathematics, CAS. 

\section{Kummer Theory} \label{section 1.25}
Fix once and for all a finitely generated subgroup $\Gamma\subset\overline{\Q}^*$ of rank $b$. 
As Conjecture \ref{conj 0} $(iii)$, and therefore Conjecture \ref{conj 1}, is true for $b=0$ (see the introduction), we can reduce the proof of all theorems mentioned in the introduction to the case that $b>0$.
We also fix once and for all a generating set $\mathcal{F}=\{\alpha_1,\dots,\alpha_b\}$ of the torsion-free part of $\Gamma$.
Finally, for $n\in\N$, we put $\mathcal{F}^{1/n}=\{\alpha_1^{1/n},\dots,\alpha_b^{1/n}\}$ and we define $\mu_n$ as the set of roots of unity killed by $n$.

Let $n$ be a positive integer dividing $m\in\N\cup\{\infty\}$ (by convention, all integers divide $\infty$). 
Galois theory claims that $\Gal(\Q(\mu_m, \mathcal{F}^{1/n})/\Q(\mathcal{F}))$ is isomorphic to the inner semidirect product of $\Gal(\Q(\mu_m, \mathcal{F})/\Q(\mathcal{F}))$ and $\Gal(\Q(\mu_m, \mathcal{F}^{1/n})/\Q(\mu_m, \mathcal{F}))$. 
Thus each element $\sigma\in\Gal(\Q(\mu_m, \mathcal{F}^{1/n})/\Q(\mathcal{F}))$ can be identified with a couple \[(\phi_\sigma, \psi_\sigma)\in \Gal(\Q(\mu_m, \mathcal{F})/\Q(\mathcal{F})) \times \Gal(\Q(\mu_m, \mathcal{F}^{1/n})/\Q(\mu_m, \mathcal{F})).\]
Concretely, if $x=\sum_{j=1}^l a_j \gamma_j\in \Q(\mu_m, \mathcal{F}^{1/n})$ with $a_j\in \Q(\mu_m, \mathcal{F})$ and $\gamma_j\in \langle \mathcal{F}^{1/n}\rangle$, then $\sigma x = \sum_{j=1}^l \phi_\sigma(a_j)\psi_\sigma(\gamma_j)$. 

The Cartesian product above can be explicitly described.
The computation of the left piece can be done thanks to the class field theory and that of the right piece by using a result of Perruca and Sgobba \cite[Theorem 13]{Perucca}, see the lemma below.

\begin{lmm} \label{lmm 16}
Let $L/\Q(\mathcal{F})$ be a finite extension. 
Then there exists an integer $C\in \N$, depending only on $\Gamma$ and $L$, such that for all $d_1,\dots,d_b\in\N$ divising $m$, we have \[ G=\Gal(L(\mu_m, \alpha_1^{1/d_1},\dots, \alpha_b^{1/d_b})/L(\mu_m))\simeq \prod_{l=1}^b \Z/(d_l/c_l)\Z\] for some positive integers $c_1,\dots,c_b\leq C$.
\end{lmm}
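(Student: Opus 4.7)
The plan is to reduce the statement directly to Perucca--Sgobba's Theorem 13, after repackaging the setup in the standard Kummer-theoretic language required there. Note first that $L$ is a number field, as a finite extension of the number field $\Q(\mathcal F)$, so the hypotheses of their result are met.

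I would start by setting up classical Kummer theory. Because $d_l\mid m$ for each $l$, the field $L(\mu_m)$ contains every $d_l$-th root of unity, so the compositum $L(\mu_m,\alpha_1^{1/d_1},\dots,\alpha_b^{1/d_b})/L(\mu_m)$ is Kummer. The Kummer pairing produces an injective homomorphism
\[
G \;\hookrightarrow\; \prod_{l=1}^{b}\mu_{d_l}\;\simeq\;\prod_{l=1}^{b}\Z/d_l\Z,\qquad \sigma\longmapsto\bigl(\sigma(\alpha_l^{1/d_l})/\alpha_l^{1/d_l}\bigr)_{1\leq l\leq b},
\]
where the product structure on the target comes from our fixed generating set $\mathcal F=\{\alpha_1,\dots,\alpha_b\}$ of the torsion-free part of $\Gamma$.

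Next I would invoke Perucca--Sgobba's Theorem 13 applied to the number field $L$ and the finitely generated group $\Gamma$. Its content, in the form needed here, is that the ``Kummer defect'' can be controlled uniformly: there exists an integer $C=C(L,\Gamma)\in\N$, independent of $m$ and of the $d_l$, such that the image of the embedding above equals
\[
\prod_{l=1}^{b}c_l(\Z/d_l\Z)
\]
for suitable positive integers $c_l$ with $c_l\mid d_l$ and $c_l\leq C$. Quotienting yields the desired isomorphism $G\simeq\prod_{l=1}^{b}\Z/(d_l/c_l)\Z$.

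The main obstacle, which is exactly what Perucca--Sgobba resolve, is twofold. First, pure Kummer theory only identifies $G$ with \emph{some} subgroup of $\prod_l\Z/d_l\Z$, and there is no a priori reason this subgroup should split as a product of cyclic subgroups indexed by the generators $\alpha_l$: multiplicative relations between the $\alpha_l$'s modulo $d_l$-th powers in $L(\mu_m)^*$ could in principle entangle the different coordinates. Second, the defects $c_l$ must be bounded uniformly in $m$ and in the $d_l$, whereas a bare application of Kummer theory would only yield bounds depending on $m$. Perucca--Sgobba handle both issues simultaneously by passing to $\ell$-adic radical extensions and exploiting the finiteness of the Kummer failure attached to any finitely generated subgroup of a number field; once this is granted, the structural conclusion above follows, and the lemma is proved.
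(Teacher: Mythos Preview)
Your argument has a genuine gap at the point where you invoke Perucca--Sgobba. Their Theorem~13, as the paper itself uses it, is purely a degree bound: it says that $[L(\mu_m,\mathcal F^{1/d}):L(\mu_m)]\geq d^b/C$ for some $C$ depending only on $L$ and $\Gamma$, and hence that the index of the Kummer image in $\prod_l\Z/d_l\Z$ is bounded. It does \emph{not} assert that this image is a product subgroup $\prod_l c_l(\Z/d_l\Z)$, and in general it is not. For a concrete instance take $b=2$, $\alpha_1=48$, $\alpha_2=\sqrt3$, $L=\Q(\sqrt3)$, $m=12$, $d_1=d_2=4$. Then $\alpha_1^{1/4}=2\cdot3^{1/4}$ and $\alpha_2^{1/4}=3^{1/8}$, so the extension of $L_0=\Q(\zeta_{12})$ generated by both is simply $L_0(3^{1/8})$; the Kummer image of its Galois group is the cyclic group $\langle(2,1)\rangle\subset\Z/4\Z\times\Z/4\Z$, which is not of the form $c_1(\Z/4\Z)\times c_2(\Z/4\Z)$ for any $c_1,c_2$. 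Your final paragraph correctly identifies this ``entanglement'' as the main obstacle, but the assertion that Perucca--Sgobba resolve it is an overstatement of what their theorem provides.

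The paper takes a different route for the structural part: it builds the tower $L_0=L(\mu_m)\subset L_1\subset\cdots\subset L_b$ with $L_l=L_{l-1}(\alpha_l^{1/d_l})$, sets $G_l=\Gal(L_l/L_{l-1})\simeq\Z/(d_l/c_l)\Z$, and argues (using that $G$ is abelian) that the successive extensions split as inner semidirect products, giving $G\simeq\prod_l G_l$. Only afterwards is Perucca--Sgobba invoked, and solely to bound $\prod_l c_l$ via the multiplicativity of degrees in the tower. Thus in the paper the product decomposition and the uniform bound are obtained by two independent arguments, whereas you attempt to extract both from a single black box that does not actually supply the first.
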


\begin{proof}
Put $L_0= L(\mu_m)$ and $L_l=L_{l-1}(\alpha_l^{1/d_l})$ for all $l\in\{1,\dots,b\}$. 
Let $G_l$ denote the Galois group of the extension $L_l/L_{l-1}$. 
Galois theory tells us that $G$ is the inner semidirect product of $G_b$ and $\Gal(L_{b-1}/L_0)$. 
As $G$ is abelian, this product is the Cartesian product. 
An easy induction shows that $G\simeq \prod_{l=1}^b G_l$. 
For all $l$, it is trivial that $G_l\simeq \Z/(d_l/c_l)\Z$ for some $c_l\in \N$. 
So $G\simeq \prod_{l=1}^b \Z/(d_l/c_l)\Z$. 
Write $d=\max\{d_1,\dots,d_b\}$. 
Theorem 13 in \cite{Perucca} claims that the extension $L_0(\mathcal{F}^{1/d})/L_0$ has degree at least $d^b/C$ for some $C\in\N$ depending only on $\Gamma$ and $L$.
On the other hand, $L_0(\mathcal{F}^{1/d})/L_0(\alpha_1^{1/d_1},\dots, \alpha_b^{1/d_b})$ has degree at most $d^b/\prod_{l=1}^b d_l$. 
The multiplicativity formula for degrees proves that $\#G \geq (\prod_{l=1}^b d_l)/C$; whence $\prod_{l=1}^b c_l\leq C$. 
\end{proof}

\begin{rqu} \label{rqu 2}
\rm{The isomorphism of Lemma \ref{lmm 16} is explicit: For each $(r_1,\dots,r_b)\in \prod_{l=1}^b \Z/(d_l/c_l)\Z$, there is an unique $\psi\in G$ such that $\psi \alpha_l^{1/d_l}=\zeta_{d_l/c_l}^{r_l} \alpha_l^{1/d_l}$ for all $l$.
Moreover, we have $\alpha_l^{1/c_l}\in L$ since $\psi\alpha_l^{1/c_l}=(\psi\alpha_l^{1/d_l})^{d_l/c_l}=\alpha_l^{1/c_l}$ for all $\psi\in G$. 
By abuse of notation, this isomorphism becomes from now an equality. }
\end{rqu}

\section{proof of Theorem \ref{thm0}} \label{section 1.5}
It is clear that Conjecture \ref{conj 0} $(iii)$ implies the assertion stated in Theorem \ref{thm0}. 
Now assume that this assertion is true and prove Conjecture \ref{conj 0} $(iii)$. 

Let $(x_n)$ be a sequence of $\Q(\Gamma_{\mathrm{div}})^*$ such that $h(x_n)\to 0$. 
We want to show that $x_n\in\Gamma_{\mathrm{div}}$ for all $n$ large enough. 
For this, assume by contradiction that $x_n\notin \Gamma_{\mathrm{div}}$ for infinitely many $n$, that is for all $n$ by taking a suitable subsequence. 

\begin{lmm} \label{lmm 0}
We have $[\Q(x_n^2) : \Q]\to +\infty$. 
\end{lmm}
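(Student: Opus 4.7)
The plan is to argue by contradiction, combining Northcott's theorem with Kronecker's theorem to force $x_n$ to be a root of unity, which would contradict the standing hypothesis $x_n \notin \Gamma_{\mathrm{div}}$.

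Suppose for contradiction that $[\Q(x_n^2):\Q]$ does not tend to infinity. Then, after passing to a subsequence, there is a constant $D$ with $[\Q(x_n^2):\Q]\leq D$ for all $n$. From $h(x_n^2)=2h(x_n)\to 0$, the heights of the $x_n^2$ are also bounded. Northcott's theorem guarantees that only finitely many algebraic numbers have degree at most $D$ and bounded height, so the sequence $(x_n^2)$ takes only finitely many values; refining the subsequence once more, I may assume $x_n^2=\beta$ is constant.

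Then $h(\beta)=\lim_n h(x_n^2)=0$, and since $x_n\neq 0$ we have $\beta\neq 0$. By Kronecker's theorem this forces $\beta\in\mu_\infty$. Hence $x_n^2\in\mu_\infty$, and since a square root of a root of unity is again a root of unity, $x_n\in\mu_\infty$. Because $1\in\Gamma$ and every root of unity satisfies $\zeta^m=1$ for some $m\in\N$, we have $\mu_\infty\subset \Gamma_{\mathrm{div}}$, so $x_n\in\Gamma_{\mathrm{div}}$ along the subsequence, contradicting the assumption that $x_n\notin\Gamma_{\mathrm{div}}$ for every $n$.

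The argument is essentially routine; there is no serious obstacle. The only thing to watch is the use of $x_n^2$ rather than $x_n$: the reason to state the lemma with $x_n^2$ is presumably to feed into later arguments involving $|\sigma y_n|^2$ as they appear in the definition of $d_{\Gamma,\varepsilon}$, but for the purposes of this proof passing between $x_n$ and $x_n^2$ costs nothing since $h(x_n^2)=2h(x_n)$ and $\mu_\infty$ is stable under taking square roots.
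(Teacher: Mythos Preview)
Your proof is correct and follows essentially the same route as the paper's: bound the degree along a subsequence, use Northcott together with $h(x_n^2)\to 0$ to force $x_n^2\in\mu_\infty$ for large $n$, and then contradict $x_n\notin\Gamma_{\mathrm{div}}$. The paper compresses the Northcott--Kronecker step into a single sentence, whereas you spell out the passage to a constant value $\beta$ and the appeal to Kronecker explicitly, but the underlying argument is identical.
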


\begin{proof}
Let $l$ be an accumulation point of the sequence $([\Q(x_n^2) : \Q])$.
Passing to a subsequence, we get $[\Q(x_n^2) : \Q]\to l$. 
If $l<+\infty$, then our sequence is bounded and Northcott's theorem implies that $x_n^2\in\mu_\infty\subset \Gamma_{\mathrm{div}}$ for all $n$ large enough, which is absurd. 
So $l=+\infty$ and the lemma follows.  
\end{proof}

Recall that $K_\Gamma=\Q(\mu_\infty, \Gamma)=\Q(\mu_\infty, \mathcal{F})$ and note that $\Q(\Gamma_{\mathrm{div}})=\bigcup_{n\in\N} K_\Gamma(\mathcal{F}^{1/n})$.

\begin{lmm} \label{lmm 0.5}
There is a sequence $(\sigma_n)$ of $\Gal(\overline{\Q}/\Q(\mathcal{F}))$ such that $d_{\Gamma, \varepsilon}(\sigma_n x_n)\underset{n\to+\infty}{\longrightarrow} 1$ for all $\varepsilon>0$. 
\end{lmm}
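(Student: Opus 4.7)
The plan combines Bilu's equidistribution theorem with a pigeonhole descent and a final diagonal selection. By Lemma \ref{lmm 0}, $[\Q(x_n):\Q]\to\infty$, and by assumption $h(x_n)\to 0$, so Bilu's theorem in the form recalled in Subsection \ref{subsection 1.2} yields, for every fixed $\varepsilon>0$,
\[
\frac{\#\{\sigma : \Q(x_n)\hookrightarrow\overline{\Q},\ 1-\varepsilon\leq|\sigma x_n|^2\leq 1+\varepsilon\}}{[\Q(x_n):\Q]}\underset{n\to+\infty}{\longrightarrow} 1.
\]
Write $F:=\Q(\mathcal{F})$ and $A_\varepsilon:=\{z\in\C : 1-\varepsilon\leq |z|^2\leq 1+\varepsilon\}$.

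The first descent is to $F$-orbits: the $\Q$-orbit of $x_n$ decomposes into at most $[F:\Q]$ many $F$-orbits, each of cardinality between $[\Q(x_n):\Q]/[F:\Q]$ and $[\Q(x_n):\Q]$. Since the proportion of the full $\Q$-orbit lying outside $A_\varepsilon$ is $o(1)$, an elementary bound shows the same is true of each $F$-orbit individually; in particular, the $F$-orbit $O_F(x_n)$ of $x_n$ itself has density in $A_\varepsilon$ tending to $1$. The second descent is to $K_\Gamma$-orbits: since $K_\Gamma=F(\mu_\infty)$ is a Galois extension of $F$, $\Gal(\overline{\Q}/K_\Gamma)$ is a normal subgroup of $\Gal(\overline{\Q}/F)$, and $O_F(x_n)$ splits as a disjoint union of $K_\Gamma$-orbits $\mathcal{O}_{n,1},\dots,\mathcal{O}_{n,t_n}$ all of the same cardinality. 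Set $q_{n,k}(\varepsilon):=|\mathcal{O}_{n,k}\cap A_\varepsilon|/|\mathcal{O}_{n,k}|\in[0,1]$. The previous step gives $\frac{1}{t_n}\sum_k q_{n,k}(\varepsilon)\to 1$ for every $\varepsilon>0$, and Markov's inequality then yields $\#\{k : q_{n,k}(1/m)<1-1/m\}/t_n\to 0$ for every $m\in\N$.

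It remains to choose $\sigma_n$ independently of $\varepsilon$. Pick an increasing sequence $(n_m)$ with the property that for $n\geq n_m$ the Markov proportion above is at most $2^{-m-1}$, and set $M(n):=\max\{m : n\geq n_m\}$, so that $M(n)\to\infty$. For $n$ large, the union over $m\leq M(n)$ of the associated bad index sets then has relative size at most $\sum_{m\geq 1} 2^{-m-1} = 1/2$, leaving some $k_n$ with $q_{n,k_n}(1/m)\geq 1-1/m$ for every $m\leq M(n)$. Take $\sigma_n\in\Gal(\overline{\Q}/F)$ with $\sigma_n x_n\in\mathcal{O}_{n,k_n}$. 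Given any $\varepsilon>0$, choose $m$ with $1/m<\varepsilon$: then for $n$ sufficiently large, $d_{\Gamma,\varepsilon}(\sigma_n x_n)=q_{n,k_n}(\varepsilon)\geq q_{n,k_n}(1/m)\geq 1-1/m$, and letting $m\to\infty$ yields the desired conclusion. The main subtlety is exactly this final diagonalization: a separate Markov choice for each $\varepsilon$ is immediate, but producing a single $k_n$ good for every $\varepsilon$ simultaneously requires the Borel--Cantelli-style summable-tails selection.
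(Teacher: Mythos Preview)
Your proof is correct. The underlying idea is the same as the paper's: use Bilu's equidistribution to show that the average, over all $K_\Gamma$-orbits inside the $F$-orbit of $x_n$, of the proportion landing in the annulus $A_\varepsilon$ tends to~$1$, and then select a good orbit. The implementations, however, differ.

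The paper descends to a finite level: it chooses integers $m_n\mid m'_n$ so that $x_n\in\Q(\zeta_{m'_n},\mathcal{F}^{1/m_n})$, writes the Galois group over $\Q(\mathcal{F})$ as an inner semidirect product $N_n\ltimes H_n$ with $H_n$ playing the role of the $K_\Gamma$-orbit, applies Bilu directly over $\Q(\mathcal{F})$ (via Yuan's version), and then simply picks $\phi_n\in N_n$ \emph{maximizing} the density $d_n(\phi)$; since the maximum dominates the average, the squeeze theorem finishes. You instead stay at the infinite level, first pass from the $\Q$-orbit to $O_F(x_n)$ by a crude counting bound (using $|O_F(x_n)|\geq[\Q(x_n):\Q]/[F:\Q]$), then use normality of $\Gal(\overline{\Q}/K_\Gamma)$ in $\Gal(\overline{\Q}/F)$ to split into equal-size $K_\Gamma$-orbits, and finally select a good orbit via Markov's inequality plus a Borel--Cantelli-type diagonalization in~$\varepsilon$.

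What each approach buys: the paper's explicit Kummer/semidirect description is cleaner algebraically and makes the ``max $\geq$ average'' step a one-liner, but as written its $\phi_n$ is chosen after fixing $\varepsilon$, so the $\varepsilon$-uniformity of the resulting $\sigma_n$ is left implicit. Your argument is softer (no finite-level bookkeeping, no semidirect product) and your diagonal selection makes the $\varepsilon$-independence of $(\sigma_n)$ fully explicit---which is precisely what the application to Theorem~\ref{thm0} requires.
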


\begin{proof}
Let $n\in \N$.
 There exists $m_n\in\N$ such that $x_n\in K_\Gamma(\mathcal{F}^{1/m_n})$. 
We can also find a multiple $m'_n\in\N$ of $m_n$ such that: 
\begin{enumerate}[(i)]
\item $x_n\in \Q(\zeta_{m'_n}, \mathcal{F}^{1/m_n})$; 
\item The restriction map $\Gal(K_\Gamma(\mathcal{F}^{1/m_n})/K_\Gamma)\to H_n=\Gal(\Q(\zeta_{m'_n}, \mathcal{F}^{1/m_n})/\Q(\zeta_{m'_n}, \mathcal{F}))$ is an isomorphism. 
\end{enumerate}
Express $x_n$ as $x_n=\sum_{j=1}^{l_n} a_{j,n}\gamma_{j,n}$ with $a_{j,n}\in \Q(\zeta_{m'_n}, \mathcal{F})$ and $\gamma_{j,n}\in \langle \mathcal{F}^{1/m_n}\rangle$. 
Fix $\varepsilon>0$. 
For any $\phi\in N_n=\Gal(\Q(\zeta_{m'_n}, \mathcal{F})/\Q(\mathcal{F}))$, we set \[ d_{n}(\phi)=\frac{\#\left\{\psi\in H_n, 1-\varepsilon \leq \left\vert \sum_{j=1}^{l_n} \phi(a_{j,n})\psi(\gamma_{j,n})\right\vert^2\leq 1+\varepsilon \right\}}{\# H_n} \] and we pick $\phi_n\in N_n$ such that $d_n(\phi_n)=\max_{\phi\in N_n} \{d_n(\phi)\}$.

By Galois theory, $G_n=\Gal(\Q(\zeta_{m'_n}, \mathcal{F}^{1/m_n})/\Q(\mathcal{F}))$ is isomorphic to the inner semidirect product of $N_n$ and $H_n$. 
Denote by $\sigma_n$ the element of $G_n$ corresponding to the couple $(\phi_n, 1)\in N_n\times H_n$. 
Section \ref{section 1.25} tells us that $\sigma_n x_n = \sum_{j=1}^{l_n} \phi_n(a_{j,n})\gamma_{j,n}$. 

From (i)-(ii), we infer that $O_\Gamma(\sigma_n x_n)$ is equal to the orbit of $\sigma_n x_n$ under $H_n$. 
Thus it follows from the definition of $d_{\Gamma,\varepsilon}(\sigma_n x_n)$ (see Subsection \ref{subsection 1.2}) that \[d_{\Gamma, \varepsilon}(\sigma_n x_n) = \frac{\#\{\psi \in H_n, 1-\varepsilon\leq \vert \psi\sigma_n x_n\vert^2\leq 1+\varepsilon\}}{\#H_n} = d_n(\phi_n). \]
A small calculation gives \[ \begin{aligned}
u_n &= \#\{\sigma\in G_n,  1-\varepsilon\leq \vert \sigma x_n\vert^2\leq 1+\varepsilon \} \\ 
&= \#\left\{(\phi,\psi)\in N_n\times H_n, 1-\varepsilon\leq  \left\vert \sum_{j=1}^{l_n} \phi(a_{j,n})\psi(\gamma_{j,n})\right\vert^2\leq 1+\varepsilon\right\} \\ 
&= \sum_{\phi\in N_n} \#\left\{\psi \in H_n, 1-\varepsilon\leq  \left\vert \sum_{j=1}^{l_n} \phi(a_{j,n})\psi(\gamma_{j,n})\right\vert^2\leq 1+\varepsilon\right\},
\end{aligned}\] and so
\begin{equation} \label{eq0}
 \frac{u_n}{\#G_n} = \frac{1}{\#N_n}\sum_{\phi\in N_n} d_n(\phi) \leq d_n(\phi_n)= d_{\Gamma,\varepsilon}(\sigma_n x_n)\leq 1. 
\end{equation}
Recall that $\mathcal{F}$ is a finite set. 
As $h(x_n^2)=2h(x_n)\to 0$ and $[\Q(\mathcal{F}, x_n^2) :\Q(\mathcal{F})]= [\Q(\mathcal{F}, x_n^2) : \Q]/[\Q(\mathcal{F}):\Q]\to +\infty$ by Lemma \ref{lmm 0}, we infer from Bilu's equidistribution Theorem \cite[Subsection 1.1, Theorem]{Yuan} applied to $K=\Q(\mathcal{F})$ that $u_n/\#G_n$ goes to $1$ as $n\to +\infty$. 
Lemma follows by involving the squeeze theorem in \eqref{eq0}. 
\end{proof}

\textit{Proof of Theorem 1.4:}
We have $h(\sigma_n x_n)=h(x_n)\to 0$ and $d_{\Gamma, \varepsilon}(\sigma_n x_n)\underset{n\to+\infty}{\longrightarrow} 1$ for all $\varepsilon>0$ by Lemma \ref{lmm 0.5}.
Applying the assertion of Theorem \ref{thm0} (which is assumed to be true) to $y_n=\sigma_n x_n$ shows that $\sigma_n x_n\in\Gamma_{\mathrm{div}}$ for all $n$ large enough. 

By definition of the division group, we deduce that $\sigma_n (x_n^{j_n})\in \langle \mathcal{F}\rangle $ for some integer $j_n\in\N$. 
As $\sigma_n$ fixes the elements of $\mathcal{F}$, we conclude $x_n^{j_n}\in \langle \mathcal{F} \rangle\subset \Gamma$, which contradicts the fact that $x_n\notin \Gamma_{\mathrm{div}}$. 
This finishes the proof of Theorem \ref{thm0}. 
\qed

\section{Direct image of $\mu_d$ under a meromorphic function} 

\begin{defn} \label{defn 2}
\rm{ Let $N\in \N$. 
Denote by $\mathcal{S}_N$ the set of integers $d\in\N$ for which there exists a meromorphic function $f(z)= \sum_{j=1}^N a_j z^{b_j}$ on $\C^*$ satisfying $\vert f(\zeta) \vert=1$ for all $\zeta\in \mu_d$. 
Moreover, we impose that: 
\begin{itemize}
\item $b_1=0,b_2, \dots,b_N\in \Z$ are pairwise distinct integers such that $b_1,\dots,b_N$ and $d$ have no common positive factors other than $1$; 
\item $a_1,\dots,a_N\in \C$ satisfy $\sum_{i\in I} a_i\neq 0$ for all non-empty subsets $I\subset \{1,\dots, N\}$.
\end{itemize}}
\end{defn}

The study of $\mathcal{S}_N$ might be of independent interest, but we only prove in this section what is needed for this article, namely the finiteness of it for all $N\in\N$. 

Let $N\geq 2$, and let $d\in\mathcal{S}_N$ be greater than $4^N (N^2-1)$. 
The lemma below asserts that regardless of the choice of integers $b_1,\dots,b_N$ as in Definition \ref{defn 2}, at least one of them has an absolute value greater than or equal to $d/4$. 
We will then contradict this claim thanks to Dirichlet's Theorem on simultaneous approximation.
                                                                                                                                                                                                   
\begin{lmm} \label{lmm Fischler}
Let $M\geq 2$, and let $g(z)=\sum_{j=1}^M u_j z^{c_j}$ be a meromorphic function on $\C^*$ with $u_1,\dots,u_M\in\C^*$ and $c_1,\dots,c_M\in\Z$ pairwise distinct integers.
If $d\geq M^2$ is an integer such that $\vert g(\zeta)\vert =1$ for all $\zeta\in \mu_d$, then $\max\{\vert c_1\vert ,\dots, \vert c_M\vert \} \geq d/4$.
\end{lmm}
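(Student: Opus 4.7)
The plan is a contrapositive argument based on the linear independence of the characters of $\mu_d$. I would first assume, for contradiction, that $\max_j |c_j| < d/4$. Introduce the companion Laurent polynomial
\[\tilde g(z) = \sum_{j=1}^M \overline{u_j}\, z^{-c_j},\]
which satisfies $\tilde g(\zeta) = \overline{g(\zeta)}$ for every $\zeta$ on the unit circle. The hypothesis $|g(\zeta)|^2 = 1$ for all $\zeta \in \mu_d$ then translates into $P(\zeta) = 1$ on $\mu_d$, where
\[P(z) := g(z)\tilde g(z) = \sum_{j,k=1}^M u_j \overline{u_k}\, z^{c_j - c_k} \in \C[z, z^{-1}].\]

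The key point is that, under the assumption $\max_j |c_j| < d/4$, every exponent $c_j - c_k$ appearing in $P$ lies in the open interval $(-d/2, d/2)$, so distinct such exponents have distinct residues modulo $d$. Writing $P(z) - 1 = \sum_n r_n z^n$, the support of the nonzero $r_n$ is thus a set of integers pairwise incongruent modulo $d$. Since the characters $\zeta \mapsto \zeta^n$ of $\mu_d$ attached to distinct residues are linearly independent (Vandermonde on distinct $d$-th roots of unity), the identity $P(\zeta) - 1 = 0$ on $\mu_d$ forces all $r_n$ to vanish; i.e., $P(z) \equiv 1$ as a Laurent polynomial.

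The contradiction then comes from the top-degree term of $P$. Let $p, q \in \{1, \ldots, M\}$ be the (unique) indices achieving $c_p = \max_j c_j$ and $c_q = \min_j c_j$; since $M \geq 2$ and the $c_j$ are pairwise distinct, $c_p - c_q$ is strictly the largest element of $\{c_j - c_k\}$, attained only by the pair $(p, q)$. Hence the coefficient of $z^{c_p - c_q}$ in $P$ equals the single term $u_p \overline{u_q}$, which is nonzero since $u_p, u_q \in \C^*$; this contradicts $P \equiv 1$. The main subtlety is the Vandermonde step: the strictness of $|c_j - c_k| < d/2$ is essential for distinct exponents to give distinct residues modulo $d$, which is precisely why the constant $1/4$ (rather than $1/2$) appears. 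Incidentally, the hypothesis $d \geq M^2$ does not seem to play any role in this particular proof—only $\max_j |c_j| < d/4$ enters—so the authors may have had in mind a different, pigeonhole-flavoured argument via Dirichlet's simultaneous approximation theorem, as suggested in the paragraph preceding the lemma.
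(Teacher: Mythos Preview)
Your proof is correct and follows essentially the same route as the paper: both expand $|g(\zeta)|^2$ as a Laurent polynomial in $\zeta$, use that the exponents $c_j-c_k$ lie in $(-d/2,d/2)$ so are pairwise incongruent mod $d$, invoke Vandermonde/character independence on $\mu_d$ to force all nonconstant coefficients to vanish, and then derive the contradiction from the extremal coefficient $u_p\overline{u_q}$ with $c_p=\max_j c_j$, $c_q=\min_j c_j$. Your observation that the hypothesis $d\ge M^2$ is superfluous here is also correct: the paper uses it only to extract an $n\times n$ Vandermonde submatrix (with $n\le M^2$), whereas full orthogonality of the $d$ characters of $\mu_d$ makes this bound unnecessary.
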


\begin{proof} 
Assume by contradiction that $\max\{\vert c_1\vert ,\dots, \vert c_M\vert \} < d/4$.
Write \[\{\gamma_1,\dots, \gamma_n\}=\{c_i-c_j, 1\leq i,j \leq M\}\] with $n\leq M^2$ and $\gamma_1,\dots, \gamma_n$ pairwise distinct.
Note that $0=c_1-c_1\in\{\gamma_1,\dots,\gamma_n\}$. 
We can thus assume that $\gamma_1=0$.
Moreover, $\gamma_k\in ]-d/2, d/2[$ for all $k$ since $\max\{\vert c_1\vert ,\dots, \vert c_M\vert \} < d/4$.

For each $k\in\{1,\dots,n\}$, define $E_k$ as the set of couples $(i,j)\in\{1,\dots, M\}^2$ for which $\gamma_k=c_i-c_j$.
Clearly $E_1,\dots, E_n$ is a partition of $\{1,\dots,M\}^2$. 
Then put $v_k= \sum_{(i,j)\in E_k} u_i\overline{u_j}$. 
An easy calculation gives, for all $l\in\{0,\dots,d-1\}$,  
\begin{equation} \label{eq system}
\begin{aligned}
\sum_{k=1}^n \zeta_d^{l\gamma_k}  v_k & = \sum_{k=1}^n \sum_{(i,j)\in E_k}u_i\overline{u_j} \zeta_d^{l(c_i-c_j)} \\
& = \sum_{(i,j)\in \bigsqcup_{k=1}^n E_k} u_i\overline{u_j} \zeta_d^{l(c_i-c_j)} =  \sum_{1\leq i,j\leq M} u_i\overline{u_j} \zeta_d^{l(c_i-c_j)} \\ 
& = \left(\sum_{i=1}^M u_i \zeta_d^{lc_i}\right) \left(\sum_{j=1}^M \overline{u_j} \zeta_d^{-l c_j}\right) = g(\zeta_d^l)\overline{g(\zeta_d^l)}= 1,
\end{aligned}
\end{equation}
 the last equality coming from the fact that $\vert g(\zeta)\vert=1$ for all $\zeta\in\mu_d$.
 We recognize a linear system with $n$ unknowns (namely $v_1,\dots,v_n$) and $d$ equations. 
By assumption, $d\geq M^2\geq n$.
The (square) matrix associated to the linear subsystem
\begin{equation} \label{eq subsystem}
  \forall l\in\{0,\dots,n-1\}, \; \; \; \; \; \sum_{k=1}^n \zeta_d^{l\gamma_k}  v_k = 1
\end{equation} 
is the Vandermonde matrix $\left(\zeta_d^{l \gamma_k}\right)_{\underset{1\leq k \leq n}{0\leq l\leq n-1}}$.
It is well known that its determinant is $\prod_{1\leq i<j \leq n} (\zeta_d^{\gamma_j}-\zeta_d^{\gamma_i})$.
As $\gamma_1,\dots,\gamma_n\in ]-d/2, d/2[$ are pairwise distinct, the numbers $\zeta_d^{\gamma_1},\dots, \zeta_d^{\gamma_n}$ are therefore pairwise distinct. 
Hence the determinant above is non-zero, and so \eqref{eq subsystem} has a unique solution. 
Thus \eqref{eq system} has at most one solution. 
As $\gamma_1=0$, it follows that $(v_1,\dots,v_n)=(1,0,\dots,0)$ is the unique solution of \eqref{eq system}.

Denote by $i_0$, resp. $j_0$, the element of $\{1,\dots, M\}$ such that 
\begin{equation} \label{eq max moins min}
c_{i_0}=\max\{c_1,\dots,c_M\}, \; \text{resp.} \; c_{j_0}=\min\{c_1,\dots,c_M\}.
\end{equation}
Let $m\in\{1,\dots,n\}$ be the integer such that $\gamma_m=c_{i_0}-c_{j_0}$, and let $(i,j)\in E_m$. 
Then $c_i-c_j=c_{i_0}-c_{j_0}$ and \eqref{eq max moins min} leads to $c_i=c_{i_0}$ and $c_j=c_{j_0}$. 
As $c_1,\dots,c_M$ are pairwise distinct, we get $(i,j)=(i_0,j_0)$.  
In conclusion, $E_m=\{(i_0, j_0)\}$. 

Recall that $\gamma_1=0$. 
So the set $E_1$ contains at least $M\geq 2$ elements, namely $(1,1),\dots,(M,M)$; whence $m >1$. 
But then, by the foregoing, we have \[0= v_m=\sum_{(i,j)\in E_m} u_i \overline{u_j} = u_{i_0}\overline{u_{j_0}},\] i.e. either $u_{i_0}=0$ or $u_{j_0}=0$, which is absurd. 
This completes the proof.
\end{proof}

We can now prove the desired result. 

\begin{thm} \label{thm S N fini}
For all $N\in \N$, the set $\mathcal{S}_N$ is finite. 
Moreover, $\mathcal{S}_1=\{1\}$ and $4^N (N^2-1)$ is an upper bound of $\mathcal{S}_N$ for all $N\geq 2$.
\end{thm}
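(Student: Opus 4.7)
The plan is to treat $N=1$ by inspection and, for $N\geq 2$, to apply Dirichlet's theorem on simultaneous approximation to the fractions $b_2/d,\ldots,b_N/d$ in order to manufacture, from $f$, a companion function $g$ whose exponents are of small absolute value; Lemma~\ref{lmm Fischler} will then yield a contradiction whenever $d$ exceeds the stated bound. For $N=1$, $f(z)=a_1$ is constant, so $|f(\zeta)|=1$ forces $|a_1|=1$, and the gcd condition reduces to $\gcd(0,d)=d=1$; hence $\mathcal{S}_1=\{1\}$.

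For $N\geq 2$, I would fix $d\in\mathcal{S}_N$ with $d>4^N(N^2-1)$ and apply the pigeonhole form of Dirichlet's theorem to $b_2/d,\ldots,b_N/d$, partitioning $[0,1)^{N-1}$ into the $4^{N-1}$ half-open boxes of side $1/4$ and pigeonholing the $4^{N-1}+1$ vectors $(\{qb_2/d\},\ldots,\{qb_N/d\})$ for $q=0,1,\ldots,4^{N-1}$. Taking the difference of the two values of $q$ whose vectors share a box produces an integer $q\in\{1,\ldots,4^{N-1}\}$ and integers $p_2,\ldots,p_N$ satisfying the strict bound $|qb_j-p_jd|<d/4$ for each $j\geq 2$. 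Setting $c_1=0$ and $c_j=qb_j-p_jd$ for $j\geq 2$, one has $|c_j|<d/4$ for every $j$, and the function $g(z):=\sum_{j=1}^N a_j z^{c_j}$ satisfies $g(\zeta)=f(\zeta^q)$ on $\mu_d$ since $\zeta^d=1$; as $\zeta^q\in\mu_d$, $|g(\zeta)|=1$.

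Regrouping equal exponents writes $g(z)=\sum_{k=1}^M\tilde u_k z^{\tilde c_k}$ with pairwise distinct $\tilde c_k$ and $\tilde u_k=\sum_{j\colon c_j=\tilde c_k}a_j$; the subset-sum hypothesis on the $a_j$ ensures each $\tilde u_k$ is non-zero. Two cases then arise. If $M\geq 2$, then $d>4^N(N^2-1)\geq N^2\geq M^2$, so Lemma~\ref{lmm Fischler} applies to $g$ and forces $\max_k|\tilde c_k|\geq d/4$, contradicting $|c_j|<d/4$. If $M=1$, then $c_j=0$ for every $j$, i.e.\ $d\mid qb_j$ for all $j$; writing $\delta=\gcd(q,d)$ and using $\gcd(q/\delta,d/\delta)=1$, one gets $(d/\delta)\mid b_j$ for every $j$, hence $d/\delta\mid\gcd(b_1,\ldots,b_N,d)=1$, so $d=\delta\mid q\leq 4^{N-1}$, contradicting $d>4^{N-1}$ (which itself follows from $d>4^N(N^2-1)$ since $4(N^2-1)\geq 12$ for $N\geq 2$).

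The main technical obstacle will be ensuring that the inequality $|qb_j-p_jd|<d/4$ is \emph{strict} and not merely $\leq d/4$: the Vandermonde argument underlying Lemma~\ref{lmm Fischler} genuinely needs strictness, because at $|c_j|=d/4$ the differences $c_i-c_j=\pm d/2$ coincide modulo $d$ (both giving $\zeta_d^{\pm d/2}=-1$) and the key determinant vanishes. Working with \emph{half-open} boxes rather than closed boxes in the pigeonhole step is precisely what delivers the strict inequality and thereby pins down the constant $4^N(N^2-1)$ in the final bound.
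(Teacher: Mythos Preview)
Your proposal is correct and follows the paper's strategy: Dirichlet approximation to bring the exponents within $d/4$ of zero, then Lemma~\ref{lmm Fischler} to force a contradiction. The differences are minor streamlinings rather than a new route. The paper applies Dirichlet to all $N$ fractions $b_j/d$ with denominator bound $4^N$, then divides through by $e=\gcd(q,d)$ to pass to $\mu_{d'}$ with $d'=d/e$, and spends some effort verifying that the resulting data actually witness $d'\in\mathcal{S}_M$ (checking the gcd condition on the new exponents and $d'$, and the subset-sum condition on the new coefficients); the $M=1$ case is then handled via $\mathcal{S}_1=\{1\}$. You instead approximate only the $N-1$ nontrivial fractions (so $q\leq 4^{N-1}$), skip the division by $e$, and apply Lemma~\ref{lmm Fischler} directly on $\mu_d$; since that lemma only needs nonzero coefficients and distinct exponents, the extra $\mathcal{S}_M$-membership verifications in the paper are not needed. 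Your direct divisibility argument for $M=1$ is also cleaner than routing through $\mathcal{S}_1$. Both approaches yield the same bound $4^N(N^2-1)$.
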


\begin{proof}
Let $d\in \mathcal{S}_N$.
There is a meromorphic function $f(z)= \sum_{j=1}^N a_jz^{b_j}$ on $\C^*$ such that $\vert f(\zeta)\vert=1$ for all $\zeta\in\mu_d$, where $a_1,\dots, a_N$ and $b_1,\dots, b_N$ are as in Definition \ref{defn 2}.
If $N=1$, then $d$ has to be coprime to $b_1=0$, i.e. $d=1$. 
As $\vert 1\vert=1$, we get $\mathcal{S}_1=\{1\}$. 
Now assume that $N\geq 2$. 

Dirichlet's theorem on simultaneous approximation asserts the existence of integers $p_1,\dots,p_N\in\Z$ and $q\in\{1,\dots,4^N\}$ such that $\vert q (b_j/d) - p_j\vert < 1/4$ for all $j\in\{1,\dots,N\}$.
Write $e$ for the greatest common divisor of $q$ and $d$, then put $q'=q/e$ and $d'=d/e$. 
Clearly $q'$ and $d'$ are coprime, $e\in\{1,\dots,4^N\}$ and 
\begin{equation} \label{eq encore une et pourquoi elle}
\forall j\in\{1,\dots, N\}, \; \; \; \; \; \vert q' b_j - d'p_j\vert < d'/4.
\end{equation}
Put $\{c_1,\dots,c_M\}=\{q'b_1-d'p_1, \dots, q'b_N-d'p_N\}$, where $c_1,\dots,c_M$ are pairwise distinct. 
We can assume that $c_1=q'b_1-d'p_1$.
For all $k\in\{1,\dots, M\}$, denote by $E_k$ the set of integers $j\in\{1,\dots,N\}$ such that $c_k=q'b_j-d'p_j$.
Clearly $E_1,\dots, E_M$ is a partition of $\{1,\dots,N\}$. 
Put $u_k=\sum_{j\in E_k} a_j$ and $g(z)=\sum_{k=1}^M u_k z^{c_k}$. 

As $b_1=0$ by Definition \ref{defn 2} and $\vert p_1\vert<1/4$ by \eqref{eq encore une et pourquoi elle}, we conclude $c_1=0$.  
Let $n$ be a common positive factor of $c_1,\dots,c_M$ and $d'$. 
From the definition of $c_i$, we deduce that $n$ divides $q'b_1,\dots,q'b_N$ and $q'd'$.
By Definition \ref{defn 2}, $b_1,\dots,b_N$ and $d'$ have no common positive factors other than $1$ since $d'$ divides $d$. 
Hence $n$ divides $q'$. 
But it divides $d'$ too. 
Whence $n=1$ since $q'$ and $d'$ are coprime.

We have $\sum_{k\in I} u_k \neq 0$ for all non-empty subsets $I\subset \{1,\dots, M\}$ since otherwise, \[0= \sum_{k\in I} u_k = \sum_{k\in I} \sum_{j\in E_k} a_j = \sum_{j\in \bigsqcup_{k\in I} E_k} a_j,\] which disagrees with Definition \ref{defn 2}. 
In particular, $u_1,\dots,u_M\in \C^*$.
Finally,
\begin{align*}
\forall l\in\Z, \; \; \; \; \;  f(\zeta_d^{ql}) & = \sum_{j=1}^N a_j\zeta_d^{q b_j l} =  \sum_{j=1}^N a_j\zeta_{d'}^{q'b_j l} = \sum_{j=1}^N a_j\zeta_{d'}^{(q'b_j-d'p_j)l} \\
& = \sum_{k=1}^M\sum_{j\in E_k} a_j\zeta_{d'}^{(q'b_j-d'p_j)l} = \sum_{k=1}^M u_k \zeta_{d'}^{c_kl} = g(\zeta_{d'}^l).
\end{align*}
As $\vert f(\zeta)\vert=1$ for all $\zeta\in \mu_d$, we infer that $\vert g(\zeta)\vert=1$ for all $\zeta\in \mu_{d'}$. 

Of all this, we conclude $d'\in \mathcal{S}_M$. 
Recall that $d'=d/e$ with $e\in\{1,\dots, 4^N\}$, that $u_1,\dots,u_M\in\C^*$, that $c_1,\dots,c_M\in\Z$ are pairwise distinct, that $\vert g(\zeta)\vert=1$ for all $\zeta\in \mu_{d'}$ and that $\max\{\vert c_1\vert, \dots, \vert c_M\vert\}<d'/4$ by \eqref{eq encore une et pourquoi elle}.
If $M\geq 2$, then Lemma \ref{lmm Fischler} applied to $d=d'$ implies $d'=d/e < M^2$; whence $d \leq 4^N (N^2-1)$.
If $M=1$, then $d'=1$ since $\mathcal{S}_1=\{1\}$.
Thus $d\leq 4^N$ and the theorem follows.
\end{proof}

\section{Proof of Theorem \ref{thm 1}} \label{section X}
Fix once and for all an integer $N\in\N$.
Recall that the length of an element $y\in\Q(\Gamma_{\mathrm{div}})$ is defined to be the smallest $l\in\N$ for which $y$ can express as $y=\sum_{j=1}^l y_j \gamma_j$ with $y_j\in K_\Gamma=\Q(\mu_\infty, \Gamma)$ and $\gamma_j\in\Gamma_{\mathrm{div}}$. 
The lemma below shows that the length is invariant under translation of points in $\Gamma_{\mathrm{div}}$. 

\begin{lmm} \label{lmm meme longiueur}
Let $y\in \Q(\Gamma_{\mathrm{div}})$, and let $z\in \Gamma_{\mathrm{div}}$. 
Then $y$ and $yz$ have the same length. 
\end{lmm}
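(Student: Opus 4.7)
The proof is essentially a tautology once one recalls that $\Gamma_{\mathrm{div}}$ is a multiplicative group; the plan is simply to establish the two inequalities length$(yz)\leq$ length$(y)$ and length$(y)\leq$ length$(yz)$ by the same one-line computation.

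For the first inequality, I would fix a minimal decomposition $y=\sum_{j=1}^l y_j\gamma_j$ with $y_j\in K_\Gamma$, $\gamma_j\in\Gamma_{\mathrm{div}}$ and $l$ equal to the length of $y$. Multiplying both sides by $z$ and regrouping the scalars on the $K_\Gamma$-side versus the $\Gamma_{\mathrm{div}}$-side, I obtain
\[
yz \;=\; \sum_{j=1}^l y_j\,(\gamma_j z).
\]
Since $\Gamma_{\mathrm{div}}$ is closed under multiplication, each $\gamma_j z$ still lies in $\Gamma_{\mathrm{div}}$, so this is an admissible decomposition of $yz$ with $l$ summands, giving length$(yz)\leq l=$ length$(y)$.

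The reverse inequality is obtained by applying the very same argument to the pair $(yz,\,z^{-1})$, using that $z^{-1}\in\Gamma_{\mathrm{div}}$: this yields length$(y)=$ length$((yz)z^{-1})\leq$ length$(yz)$, and combining the two bounds gives the equality. The only point that needs to be checked is that $\Gamma_{\mathrm{div}}$ is indeed a group, which is immediate from its definition (if $\gamma^n,\delta^m\in\Gamma$ then $(\gamma\delta)^{nm}\in\Gamma$ and $(\gamma^{-1})^n=(\gamma^n)^{-1}\in\Gamma$). I do not anticipate any genuine obstacle; the lemma is a direct consequence of the definitions.
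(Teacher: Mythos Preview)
Your proof is correct and follows exactly the same route as the paper: show length$(yz)\leq$ length$(y)$ by multiplying a minimal decomposition of $y$ term-by-term by $z\in\Gamma_{\mathrm{div}}$, then obtain the reverse inequality by applying the same step to $(yz,z^{-1})$. The only addition is your explicit verification that $\Gamma_{\mathrm{div}}$ is a group, which the paper takes for granted.
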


\begin{proof}
Denote by $l$, resp. $l'$, the length of $y$, resp. $yz$. 
We can express $y$ as $y=\sum_{j=1}^l y_j \gamma_j$ with $y_j\in K_\Gamma$ and $\gamma_j\in\Gamma_{\mathrm{div}}$. 
Hence $yz=\sum_{j=1}^l y_j (\gamma_j z)$ and the definition of the length leads to $l'\leq l$. 
The inequality $l\leq l'$ is obtained by replacing $y$ with $yz$ and $z$ with $z^{-1}\in\Gamma_\mathrm{div}$.
This completes the proof.
\end{proof}

Recall that $\mathcal{F}=\{\alpha_1,\dots,\alpha_b\}$ is the torsion-free part of $\Gamma$ and that $l_N(\Gamma)$ denotes the set of elements in $\Q(\Gamma_{\mathrm{div}})$ with length $\leq N$. 
Concretely, each $x\in l_N(\Gamma)\backslash \{0\}$ with length $l$ can express as $x=\sum_{j=1}^l x_j \prod_{t=1}^b \alpha_t^{k_{j,t}/d}$, where $x_j\in K_\Gamma^*, d\in\N$ and $k_{j,t}\in\Z$.
We also recall that $U(\Gamma)$ is the set of elements in $\overline{\Q}^*$ for which all its conjugates over $K_\Gamma$ are concyclic and located on a circle centered at the origin.  
Finally, recall that for any algebraic set $X$, we define $X.\Gamma_{\mathrm{div}}$ as the set of $x\gamma$ with $x\in X$ and $\gamma\in\Gamma$.

\begin{prop} \label{lmm LL}
There is $m\in\N$ such that $l_N(\Gamma)\cap U(\Gamma)\subset K_\Gamma(\mathcal{F}^{1/m})^*.\Gamma_\mathrm{div}$.
\end{prop}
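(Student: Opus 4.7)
The strategy is to show that, after multiplying by a suitable element of $\Gamma_\mathrm{div}$, every $x\in l_N(\Gamma)\cap U(\Gamma)\setminus\{0\}$ admits a representation as a $K_\Gamma$-combination of monomials in $\mathcal{F}^{1/m}$ for a fixed $m=m(\Gamma,N)$. The mechanism is that the unit-circle condition from $U(\Gamma)$, restricted to the cyclic subgroup of the Kummer--Galois action over $K_\Gamma$ coming from one $\alpha_t$, produces a Laurent polynomial that is constant in absolute value on roots of unity, to which Theorem \ref{thm S N fini} applies.

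Start from a length-$l$ representation $x=\sum_{j=1}^l x_j\gamma_j$ with $x_j\in K_\Gamma^*$, $\gamma_j\in\Gamma_\mathrm{div}$, $l\leq N$. Lemma \ref{lmm meme longiueur}, together with the inclusion $\Gamma_\mathrm{div}\subset U(\Gamma)$ (which makes $U(\Gamma)$ stable under multiplication by $\Gamma_\mathrm{div}$), allows me to divide by $\gamma_1$ and assume $\gamma_1=1$. Choose $d\in\N$ so that all $\gamma_j\in K_\Gamma(\mathcal{F}^{1/d})$; absorbing roots of unity (which lie in $K_\Gamma$) into the $x_j$'s, write $\gamma_j=\prod_t\alpha_t^{k_{j,t}/d}$. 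Lemma \ref{lmm 16} applied with $L=\Q(\mathcal{F})$ and taken to the stable value for $m$ large yields integers $c_t\leq C=C(\Gamma)$ with $\Gal(K_\Gamma(\mathcal{F}^{1/d})/K_\Gamma)\simeq\prod_t\Z/(d/c_t)\Z$. Fixing $t=1$, setting $D_1=d/c_1$, and restricting to the cyclic subgroup of the Galois group that varies only this coordinate, the Galois orbit of $x$ along this cycle is $\{f(\zeta_{D_1}^r):r\in\Z/D_1\Z\}$, where after embedding $K_\Gamma(\mathcal{F}^{1/d})\hookrightarrow\C$,
\[ f(z) = \sum_{e\in S_1} A_e\,z^e,\qquad A_e=\sum_{j:\,k_{j,1}=e} x_j\gamma_j, \]
is a complex Laurent polynomial with at most $l\leq N$ nonzero terms. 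The hypothesis $x\in U(\Gamma)$ gives $|f(\zeta)|=\sqrt{c}$ (constant and positive, since $x\neq 0$) on $\mu_{D_1}$. Minimality of the length makes $\gamma_1,\dots,\gamma_l$ $K_\Gamma$-linearly independent, so after regrouping the $A_e$'s by $e\bmod D_1$, every nonempty partial sum of the grouped coefficients is a nonzero $K_\Gamma$-combination of distinct $\gamma_j$'s. Scaling by $1/\sqrt{c}$, shifting the minimum exponent to $0$, and factoring out $h_1=\gcd(\text{shifted exponents},D_1)$ puts us in the setting of Definition \ref{defn 2} with $M\leq N$ terms; Theorem \ref{thm S N fini} then forces $D_1/h_1\leq 4^N(N^2-1)$.

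This bound is exactly what is needed to reduce the $\alpha_1$-denominator. Since $h_1\mid k_{j,1}-e^{(1)}_{\min}$ for all $j$ (with $e^{(1)}_{\min}=\min_jk_{j,1}$),
\[ x = \alpha_1^{e^{(1)}_{\min}/d}\cdot x^{(1)},\qquad x^{(1)}=\sum_j x_j\,\alpha_1^{(k_{j,1}-e^{(1)}_{\min})/d}\prod_{t\geq 2}\alpha_t^{k_{j,t}/d}, \]
with $\alpha_1^{e^{(1)}_{\min}/d}\in\Gamma_\mathrm{div}$ and the $\alpha_1$-denominator in $x^{(1)}$ equal to $d/h_1\leq C\cdot 4^N(N^2-1)$. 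The element $x^{(1)}$ remains in $l_N(\Gamma)\cap U(\Gamma)$, so iterating the same argument for $\alpha_2,\dots,\alpha_b$---each step multiplies by a monomial in a single $\alpha_t$ and leaves already-reduced denominators untouched---produces $x=\gamma\cdot y$ with $\gamma\in\Gamma_\mathrm{div}$ and $y\in K_\Gamma(\alpha_1^{1/d_1},\dots,\alpha_b^{1/d_b})\subset K_\Gamma(\mathcal{F}^{1/m})$ for $m=\mathrm{lcm}(d_1,\dots,d_b)$, a value depending only on $\Gamma$ and $N$.

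The main obstacle is the middle step: packaging the $U(\Gamma)$-hypothesis into the exact shape required by Definition \ref{defn 2}. Two delicate points intervene: (i) Lemma \ref{lmm 16} must be applied over the non-number field $K_\Gamma$, which forces a stabilization argument in $m$ to keep the constant $C$ uniform in $d$; and (ii) the non-vanishing hypothesis $\sum_{i\in I}a_i\neq 0$ of Definition \ref{defn 2} is ensured precisely by the $K_\Gamma$-linear independence of $\gamma_1,\dots,\gamma_l$ coming from length minimality---without this combinatorial input, Theorem \ref{thm S N fini} could not be invoked, and the whole argument would collapse.
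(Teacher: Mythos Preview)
Your proof is correct and follows essentially the same approach as the paper: both use length minimality to secure the non-vanishing hypothesis of Definition~\ref{defn 2}, restrict the Kummer--Galois action over $K_\Gamma$ to a cyclic coordinate so that the $U(\Gamma)$ condition turns into $\vert f(\zeta)\vert=\text{const}$ on $\mu_D$, and then invoke Theorem~\ref{thm S N fini} to bound the denominator. The only difference is organizational---the paper normalizes all $b$ coordinates at once and applies Theorem~\ref{thm S N fini} a single time to the coordinate with maximal $d_s$, whereas you iterate through $\alpha_1,\dots,\alpha_b$; two cosmetic points you could tighten are that the regrouping of the $A_e$ by $e\bmod D_1$ is unnecessary (Definition~\ref{defn 2} only asks for pairwise distinct integer exponents) and that Lemma~\ref{lmm 16} with $m=\infty$ already gives the uniform $C$, so no separate ``stabilization in $m$'' is needed.
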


\begin{proof}
Let $x\in l_N(\Gamma)\cap U(\Gamma)$ be an element of lenght $l\leq N$ that we express as above.
For each $t\in\{1,\dots,b\}$, set $j_t\in\{1,\dots,l\}$ such that $k_{j_t,t}=\min\{k_{1,t},\dots, k_{l,t}\}$.
Let $D_t$ denote the greatest common divisor of $k_{1,t}-k_{j_t,t}, \dots, k_{l,t}-k_{j_t,t}$ and $d$, then $c_{j,t}= (k_{j,t}-k_{j_t,t})/D_t$ and $d_t=d/D_t$.
By construction, $c_{1,t},\dots,c_{l,t}$ and $d_t$ have no common positive factors other than $1$ for all $t\in\{1,\dots,r\}$. 
Finally, set
\begin{equation} \label{eq derniere}
y= \sum_{j=1}^l x_j \prod_{t=1}^b \alpha_t^{c_{j,t}/d_t} \; \; \text{and} \; \; z=\prod_{t=1}^b \alpha_t^{k_{j_t,t}/d}\in \Gamma_\mathrm{div}.
\end{equation} 
We easily check that $x=yz$. 
As $z\in\Gamma_{\mathrm{div}}$, Lemma \ref{lmm meme longiueur} tells us that $y$ has length $l$ too. 
Let $s\in\{1,\dots,b\}$ be such that $d_s=\max\{d_1,\dots,d_b\}$.
If $d_s$ is bounded from above by a constant $c$ depending only on $\Gamma$ and $N$, then \eqref{eq derniere} would show that $y\in K_\Gamma(\mathcal{F}^{1/c!})$. 
The proposition with $m=c !$ would follow since $x=yz$ with $z\in\Gamma_{\mathrm{div}}$. 

Put $\{b_1,\dots,b_n\}=\{c_{1,s}, \dots, c_{l,s}\}$, where $b_1,\dots,b_n$ are pairwise distinct. 
We can assume that $b_1=c_{j_s,s}$.
For all $k\in\{1,\dots, n\}$, write $E_k$ the set of $j\in\{1,\dots,l\}$ such that $b_k=c_{j,s}$.
Clearly $E_1,\dots, E_n$ is a partition of $\{1,\dots,l\}$. 

Lemma \ref{lmm 16} applied to $L=\Q(\Gamma)$ and $m=\infty$ as well as the multiplicativity formula for degrees claim that the extension \[L/M= K_\Gamma(\alpha_1^{1/d_1},\dots, \alpha_r^{1/d_r}) / K_\Gamma(\alpha_1^{1/d_1},\dots, \alpha_{s-1}^{1/d_{s-1}}, \alpha_{s+1}^{1/d_{s+1}}, \dots, \alpha_r^{1/d_r})\] has degree at least $d_s/C$ for some $C>0$ depending only on $\Gamma$. 
Its Galois group is therefore isomorphic to $\Z/(d_s/c)\Z$ for some $c\leq C$. 
Put \[y_j= x_j \prod_{t=1, t\neq s}^b \alpha_t^{c_{j,t}/d_t}\in M, v_k=\sum_{j\in E_k} y_j, a_k=v_k\alpha_s^{b_k/d_s} \; \text{and} \; f(z)=\sum_{k=1}^n (a_k/y) z^{b_k}.\]  
Note that $y = \sum_{j=1}^l y_j \alpha_s^{c_{j,s}/d_s}$.  
Establish below that $d_s/c\in \mathcal{S}_n$ (see Definition \ref{defn 2}). 

First $c_{j_s,s}=0$ by definition of $c_{j,t}$; whence $b_1=0$. 
Next $b_1,\dots,b_n$ and $d_s/c$ have no common positive factors other than $1$ by construction of $c_{1,s}, \dots, c_{l,s}$ and $d_s$. 

Assume that $\sum_{k\in I}a_k=0$ for some subset $I\subset \{1,\dots,n\}$.
Then \[ \sum_{j\in \bigsqcup_{k\in I} E_k}  y_j \alpha_s^{c_{j,s}/d_s}=\sum_{k\in I}\sum_{j\in E_k}y_j\alpha_s^{c_{j,s}/d_s}=\sum_{k\in I} v_k\alpha_s^{b_k/d_s}=\sum_{k\in I} a_k= 0,\] which allows us to deduce that \[y=\sum_{j=1}^l y_j \alpha_s^{c_{j,s}/d_s} = \sum_{j\notin \bigsqcup_{k\in I} E_k} y_j\alpha_s^{c_{j,s}/d_s} = \sum_{j\notin \bigsqcup_{k\in I} E_k} x_j\prod_{t=1}^b \alpha_t^{c_{j,t}/d_t}.\] 
The definition of the length proves that $y$ has length at most $l-\#  \bigsqcup_{k\in I} E_k$. 
As $y$ has length $l$ and $E_1,\dots,E_n$ are non-empty sets, we conclude that $I$ is empty. 
The contrapositive proves that $\sum_{k\in I} a_k \neq 0$ for all non-empty subsets $I\subset \{1,\dots, n\}$.

Let $\sigma\in \Gal(L/M)$. 
As $y_j\in M$ for all $j$, we get $v_k\in M$ for all $k$. 
Collecting the information above, we obtain 
\begin{equation} \label{eq 1234}
\begin{aligned}
\sigma y & = \sigma\left(\sum_{j=1}^l y_j \alpha_s^{c_{j,s}/d_s}\right) = \sigma\left(\sum_{k=1}^n \sum_{j\in E_k} y_j \alpha_s^{c_{j,s}/d_s}\right) \\ 
& =  \sigma\left(\sum_{k=1}^n v_k \alpha_s^{b_k/d_s}\right)= \sum_{k=1}^n v_k \alpha_s^{b_k/d_s} \zeta_{d_s/c}^{b_k l_\sigma}=y f(\zeta_{d_s/c}^{l_\sigma})
\end{aligned}
\end{equation}
for some $l_\sigma\in\{1,\dots, d_s/c\}$.
Recall that $y=x/z$, that $x\in U(\Gamma)$ and that $z\in\Gamma_{\mathrm{div}}\subset U(\Gamma)$ (see the introduction). 
As $U(\Gamma)$ is a group, we deduce that $y\in U(\Gamma)$, and so $\vert \sigma y\vert = \vert y \vert$.
By varying $\sigma\in \Gal(L/M)$, we infer thanks to \eqref{eq 1234} that $\vert f(\zeta)\vert = 1$ for all $\zeta\in \mu_{d_s/c}$. 
From all this, we finally conclude $d_s/c\in \mathcal{S}_n$. 
By Theorem \ref{thm S N fini}, we get $d_s/c \leq 4^n n^2$, i.e. $d_s\leq 4^N N^2 C$, which ends the proof of the proposition. 
\end{proof}

\textit{Proof of Theorem \ref{thm 1}.} By Proposition \ref{lmm LL}, small points of $l_N(\Gamma)\cap U(\Gamma)$ lie in $K_\Gamma(\mathcal{F}^{1/m})^*.\Gamma_\mathrm{div}$ for some $m\in\N$. 
Theorem \ref{thm 1} follows by applying Lemma \ref{lmm 20} to $F=K_\Gamma(\mathcal{F}^{1/m})$.
\qed

\section{Equidistribution} \label{section 2}
As stated in the introduction, we will use equidistribution arguments to prove Theorem \ref{thm 2}, and especially Corollary \ref{lmm 9}. 
This section is therefore devoted to the proof of this corollary. 

In this section, $M\in \N$ denotes a positive integer.
The Weil height can extend to $\overline{\Q}^M$, see \cite[Section 1.5]{BombieriGubler}; call it $h$ again. 
We have $h(\boldsymbol{\zeta})=0$ for all $\boldsymbol{\zeta}\in\mu_\infty^M$. 

We say that a sequence $(\mu_n)$ of probability measures on $S=(\C^*)^M$ weakly converges to $\mu$, denoted by $\mu_n \xrightarrow{w} \mu$, if $\int_S f d\mu_n \to \int_S f d\mu$ for any bounded continuous function $f : S \to \R$.
For a finite set $F\subset S$, the discrete probability measure on $S$ associated to it is given by \[ \mu_F=\frac{1}{\#F} \sum_{\alpha\in F} \delta_\alpha, \] where $\delta_\alpha$ is the Dirac measure on $S$ supported on $\alpha$.

Write $\nu_M$ for the uniform probability measure on $S$ supported at the unit polycircle $\vert z_1\vert = \dots = \vert z_M\vert =1$, where it coincides with the normalized Haar measure.

Finally, we say that a sequence $(P_n)$ of $(\overline{\Q}^*)^M$ is strict if any proper algebraic subgroup of $(\overline{\Q}^*)^M$ contains $P_n$ for only finitely many $n$. 
When $M=1$, it is equivalent to saying that $[\Q(P_n) : \Q] \to +\infty$, see \cite[Lemma 5.2.1]{Petsche}.

The arguments presented by Bilu in \cite{Bilu} are quite general and can easily be adapted to produce other equidistribution results. 
We see an example of this below. 

\begin{prop} \label{prop 1}
Let $((\zeta_{m_n}^{k_{1,n}},\dots,\zeta_{m_n}^{k_{M,n}}))$ be a strict sequence with $m_n\in\N$ and $k_{1,n},\dots,k_{M,n}\in\Z$ for all $n$. 
Then $\mu_{E_n} \xrightarrow{w} \nu_M$ where \[E_n = \{(\zeta_{m_n}^{sk_{1,n}},\dots,\zeta_{m_n}^{sk_{M,n}}), s=1,\dots,m_n\}.\]
\end{prop}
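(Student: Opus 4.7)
The plan is to apply Weyl's equidistribution criterion. Both $\mu_{E_n}$ and $\nu_M$ are supported on the unit polycircle $T = \{(z_1,\dots,z_M) \in (\C^*)^M : |z_1| = \dots = |z_M| = 1\}$, a compact abelian group. Because both sides of the proposed weak convergence live on $T$, testing against bounded continuous functions on $(\C^*)^M$ is equivalent (via Tietze extension) to testing against continuous functions on $T$. By Stone--Weierstrass, the characters $\chi_{\mathbf{a}}(z_1,\dots,z_M) = z_1^{a_1} \cdots z_M^{a_M}$, $\mathbf{a} \in \Z^M$, span a dense subalgebra of $C(T)$, so it suffices to verify $\int \chi_{\mathbf{a}} \, d\mu_{E_n} \to \int \chi_{\mathbf{a}} \, d\nu_M$ for every $\mathbf{a}$. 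The target integral is $1$ when $\mathbf{a} = 0$ (trivial) and $0$ when $\mathbf{a} \neq 0$ (orthogonality), so the task reduces to showing that $\int \chi_{\mathbf{a}} \, d\mu_{E_n} \to 0$ for each nonzero $\mathbf{a}$.

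Next I would exploit the cyclic group structure of $E_n$. Writing $P_n = (\zeta_{m_n}^{k_{1,n}},\dots,\zeta_{m_n}^{k_{M,n}})$, the set $E_n$ is exactly the cyclic subgroup of the polycircle generated by $P_n$. A one-line orthogonality calculation then gives
\[
\int \chi_{\mathbf{a}} \, d\mu_{E_n} \;=\; \frac{1}{\#E_n} \sum_{\alpha \in E_n} \chi_{\mathbf{a}}(\alpha),
\]
which equals $1$ if the restriction $\chi_{\mathbf{a}}|_{E_n}$ is trivial and $0$ otherwise. The restriction is trivial precisely when $\chi_{\mathbf{a}}(P_n) = 1$, i.e.\ when $P_n$ lies in the algebraic subgroup $H_{\mathbf{a}} = \{\mathbf{z} \in (\overline{\Q}^*)^M : z_1^{a_1}\cdots z_M^{a_M} = 1\}$ of $(\overline{\Q}^*)^M$.

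Strictness then closes the argument: for each fixed $\mathbf{a} \neq 0$, $H_{\mathbf{a}}$ is a \emph{proper} algebraic subgroup, so by hypothesis $P_n \in H_{\mathbf{a}}$ for only finitely many $n$. Hence $\int \chi_{\mathbf{a}} \, d\mu_{E_n} = 0$ for all sufficiently large $n$, and in particular the sequence tends to $0$. Weyl's criterion then yields $\mu_{E_n} \xrightarrow{w} \nu_M$, as claimed.

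The real ``obstacle'' here is mostly cosmetic. Unlike in Bilu's theorem, no arithmetic input (Mahler measure, capacity, heights of subvarieties) is required in this special case, because the points $P_n$ already have height zero and already sit on the polycircle; the author's remark about adapting Bilu amounts to the observation that Haar measure on $T$ is characterized by the vanishing of all nontrivial Fourier coefficients, and the hypothesis of strictness is precisely what delivers this vanishing. The one point worth pausing on is verifying that the cyclic structure lets one ignore possible repetitions in the enumeration $s \mapsto P_n^s$, which the orthogonality-of-characters computation above handles automatically.
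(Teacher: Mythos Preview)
Your argument is correct and, in fact, cleaner than the paper's. Both approaches ultimately rest on testing against characters $\chi_{\mathbf{a}}$, but they diverge in how they handle the one-dimensional input and the passage from characters to general continuous functions. The paper shows, for each nonzero $\mathbf{n}$, that $\#\chi_{\mathbf{n}}(E_n)\to\infty$ (using strictness) and then invokes the Favre--Rivera-Letelier equidistribution theorem to get $\mu_{\chi_{\mathbf{n}}(E_n)}\xrightarrow{w}\nu_1$; it then follows Bilu's template, including the tightness argument with the compacts $K_\varepsilon$ and the family $A$ of Galois-invariant sets, to lift this to $\mu_{E_n}\xrightarrow{w}\nu_M$. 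You instead exploit the fact that $E_n$ is literally the cyclic subgroup generated by $P_n$, so orthogonality of characters on a finite group gives $\int\chi_{\mathbf{a}}\,d\mu_{E_n}\in\{0,1\}$ outright, with the value $1$ occurring only when $P_n$ lies in the proper subgroup $H_{\mathbf{a}}$; strictness then forces $\int\chi_{\mathbf{a}}\,d\mu_{E_n}=0$ for all large $n$. Since all measures already live on the compact torus $T$, Stone--Weierstrass finishes the job without any tightness step and without appealing to Favre--Rivera-Letelier. Your route is more elementary and tailored to this special situation (height zero, points on the polycircle); the paper's route stays closer to Bilu's general machinery, which is built for points that need not lie on the torus.
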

\begin{proof}
First of all, we alert the reader on the fact that the hypothesis "$F_n$ are pairwise distinct" made in \cite[Th\'eor\`eme 2]{FavreLetelier} can be weakened in $\#F_n \to +\infty$, the former assumption being only used to get the latter one, see \cite[Sous-section 5.4]{FavreLetelier}.

Let $\mathbf{n}=(n_1,\dots,n_M)\in \Z^M\backslash \{(0,\dots,0)\}$, and set $\chi_{\mathbf{n}} : S \to \C$ to be the function defined by $\chi_{\mathbf{n}}(z_1,\dots,z_M)=\prod_{i=1}^M z_i^{n_i}$. 
We clearly have \[\chi_\mathbf{n}(E_n) = \left\{ \zeta_{m_n}^{s\sum_{i=1}^M n_i k_{i,n}}, s=1,\dots,m_n \right\}.\] 
The sequence $(\zeta_{m_n}^{\sum_{i=1}^M n_i k_{i,n}})$ is strict since $((\zeta_{m_n}^{k_{1,n}},\dots,\zeta_{m_n}^{k_{M,n}}))$ is by hypothesis. 
Thus \[\# \chi_\mathbf{n}(E_n) \geq \left[ \Q\left( \zeta_{m_n}^{\sum_{i=1}^M n_i k_{i,n}} \right) : \Q \right] \to +\infty.\] 
Thanks to \cite[Th\'eor\`eme 2]{FavreLetelier}, we get $\mu_{\chi_\mathbf{n}(E_n)} \xrightarrow{w} \nu_1$. 
From this observation, and following the lines of the proof of \cite[Proposition 4.1]{Bilu}, we conclude that \cite[Proposition 4.1]{Bilu} also holds for the sequence $(\mu_{E_n})$. 

For $z=(z_1,\dots,z_M)\in S$, put $\vert z\vert_\infty= \max\{\vert z_1\vert, \dots, \vert z_M\vert\}$. 
Let $\varepsilon>0$ and write \[K_\varepsilon=\{ z\in S, \; \max\{ \vert z\vert_\infty, \vert z\vert_\infty^{-1}\} \leq e^{2/\varepsilon} \}\]
Let $A$ be the family of sets $E\subset (\overline{\Q}^*)^M$ that are finite, $\Gal(\overline{\Q}/\Q)$-invariant and such that $h(\boldsymbol{\beta}),h(\boldsymbol{\beta}^{-1}) \leq 1$ for all $\boldsymbol{\beta}\in E$. 
We clearly have $E_n\in A$ for all $n$.

Choose $E\in A$. 
As $E$ is both finite and $\Gal(\overline{\Q}/\Q)$-invariant, we can decompose it as a finite disjoint union of Galois orbits $\Gal(\overline{\Q}/\Q).\boldsymbol{\beta}_1, \dots, \Gal(\overline{\Q}/\Q).\boldsymbol{\beta}_k$ for some $\boldsymbol{\beta}_1,\dots,\boldsymbol{\beta}_k\in E$.  
For each $i\in\{1,\dots,k\}$, Bilu proved in \cite[Section 4]{Bilu} that $\sigma \boldsymbol{\beta}_i \notin K_\varepsilon$ for at most $\varepsilon[\Q(\boldsymbol{\beta}_i) : \Q]$ field embeddings $\sigma : \Q(\boldsymbol{\beta}_i) \to \C$. 
Thus $E\backslash K_\varepsilon$ has at most $\varepsilon\sum_{i=1}^k [\Q(\boldsymbol{\beta}_i) : \Q]=\varepsilon \#E$ elements, i.e. $\mu_E(K_\varepsilon) \geq 1-\varepsilon$.
It remains to mimic the proof of \cite[Theorem 1.1]{Bilu} made in \cite[Section 4]{Bilu} to deduce the proposition.
\end{proof}

For $\zeta, \zeta'$ belonging to the unit circle in $\C$, we denote by $[\zeta, \zeta']$ the arc of this unit circle connecting $\zeta$ and $\zeta'$ anticlockwise.  

\begin{cor} \label{lmm 9}
Let $((\zeta_{m_n}^{k_{1,n}}, \dots,\zeta_{m_n}^{k_{M,n}}))$ be a strict sequence, with $m_n\in\N$ and $k_{1,n}, \dots, k_{M,n} \in\Z$ for all $n$. 
Let $\varepsilon >0$, and let $x_1,\dots,x_M\in [0; 2\pi[$. 
Write $V=\prod_{j=1}^M [ e^{\zeta_4(x_j-\varepsilon)}; e^{\zeta_4(x_j+\varepsilon)}]$ and denote by $\mathcal{K}_n(V)$ the set of $r\in \Z/m_n\Z$ such that $(\zeta_{m_n}^{r k_{1,n}}, \dots, \zeta_{m_n}^{r k_{M,n}}) \in V$. 
Then, for all $n$ large enough, we have \[ \frac{\# \mathcal{K}_n (V)}{m_n} \geq (1-\varepsilon) \left(\frac{\varepsilon}{2\pi}\right)^M. \]
\end{cor}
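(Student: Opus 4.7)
The plan is to identify the ratio $\#\mathcal{K}_n(V)/m_n$ with the empirical measure $\mu_{E_n}(V)$ from Proposition \ref{prop 1}, and then extract the required lower bound from the weak convergence $\mu_{E_n} \xrightarrow{w} \nu_M$ via the portmanteau theorem.

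First I would observe that the map
\[
\varphi_n \colon \Z/m_n\Z \longrightarrow E_n, \qquad r \longmapsto (\zeta_{m_n}^{rk_{1,n}}, \dots, \zeta_{m_n}^{rk_{M,n}})
\]
is a surjective group homomorphism, so all of its fibers have the same cardinality $t_n := m_n/\#E_n$. Since $r \in \mathcal{K}_n(V)$ if and only if $\varphi_n(r) \in E_n \cap V$, this yields
\[
\frac{\#\mathcal{K}_n(V)}{m_n} = \frac{t_n\,\#(E_n \cap V)}{m_n} = \frac{\#(E_n \cap V)}{\#E_n} = \mu_{E_n}(V).
\]

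Next I would apply Proposition \ref{prop 1} to get $\mu_{E_n} \xrightarrow{w} \nu_M$. Because $V$ has empty interior in the $2M$-real-dimensional ambient space $S = (\C^*)^M$, I cannot apply the portmanteau theorem to $V$ directly; instead I would thicken it radially by introducing the open set
\[
W = \bigl\{\,z = (z_1,\dots,z_M) \in S : z_j/|z_j| \in A_j^\circ \text{ for all } j \,\bigr\},
\]
where $A_j^\circ = \{e^{\zeta_4\theta} : x_j - \varepsilon < \theta < x_j + \varepsilon\}$ is the open arc matching the $j$-th factor of $V$. The set $W$ is open in $S$ as the preimage of an open set under the continuous map $z \mapsto (z_j/|z_j|)_j$; its intersection with the unit polycircle is a product of open arcs, hence contained in $V$; and since $\nu_M$ is the product of the normalized arclength measures, $\nu_M(W) = (2\varepsilon/(2\pi))^M = (\varepsilon/\pi)^M$. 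Because $\mu_{E_n}$ is supported on the polycircle, $\mu_{E_n}(W) \leq \mu_{E_n}(V)$, and the portmanteau theorem therefore gives
\[
\liminf_{n\to+\infty} \mu_{E_n}(V) \;\geq\; \liminf_{n\to+\infty} \mu_{E_n}(W) \;\geq\; \nu_M(W) \;=\; \left(\frac{\varepsilon}{\pi}\right)^M.
\]
Thus for all $n$ sufficiently large,
\[
\frac{\#\mathcal{K}_n(V)}{m_n} \;\geq\; (1-\varepsilon)\left(\frac{\varepsilon}{\pi}\right)^M \;\geq\; (1-\varepsilon)\left(\frac{\varepsilon}{2\pi}\right)^M.
\]

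The only genuine subtlety is the need to thicken $V$ to a truly open subset of $S$ in order to invoke the portmanteau inequality on the ambient space; once this auxiliary set $W$ is in place, the corollary is a routine consequence of Proposition \ref{prop 1} together with standard facts on Haar measure on the real torus.
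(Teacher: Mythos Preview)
Your argument is correct. Both your proof and the paper's rest on Proposition~\ref{prop 1} and on the identification $\#\mathcal{K}_n(V)/m_n=\mu_{E_n}(V)$ via the equal-fiber count; the only difference is in how the lower bound is extracted from weak convergence. The paper inserts a continuous bump function $f$ satisfying $\mathbf{1}_W\leq f\leq \mathbf{1}_V$, where $W$ is the product of the \emph{half-width} arcs, and then uses the definition of weak convergence directly to obtain $\mu_{E_n}(V)\geq u_n\to\int f\,d\nu_M\geq\nu_M(W)=(\varepsilon/2\pi)^M$. You instead thicken $V$ radially to an open set $W\subset(\C^*)^M$ and invoke the portmanteau inequality for open sets; since $\mu_{E_n}$ lives on the polycircle this gives $\liminf\mu_{E_n}(V)\geq\nu_M(W)=(\varepsilon/\pi)^M$. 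Both routes are standard; yours avoids shrinking the arcs and hence yields the sharper constant $(\varepsilon/\pi)^M$ before throwing away a factor of $2^M$ to match the stated bound.
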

\begin{proof}
 Let $f : \C^M \to [0;1]$ be any continuous function that is identically zero outside $V$ and taking the value $1$ on \[W =\prod_{j=1}^M [ e^{\zeta_4(x_j-\varepsilon/2)}; e^{\zeta_4 (x_j+\varepsilon/2)}]\subsetneq V.\]
Let $n\in\N$. 
For $r\in \Z$, we set \[P_{r,n} = \left(\zeta_{m_n}^{r k_{1,n}}, \dots, \zeta_{m_n}^{r k_{M,n}}\right) \; \text{and} \; E_n = \{ P_{r,n}, r=1,\dots, m_n \}. \] 
Let $r_n$ be the smallest divisor of $m_n$ such that $r_n k_{j,n} \equiv 0 \; (m_n)$ for all $j\in \{1,\dots,M\}$. 
Thanks to the equality $P_{r+r_n,n}=P_{r,n}$, valid for all $r\in\Z$, we infer that $E_n = \{ P_{r,n}, r=1,\dots,r_n \}$. 
Furthermore, the minimality of $r_n$ gives $\# E_n = r_n$.
Thus \[\frac{1}{ m_n} \sum_{r=1}^{m_n} \delta_{P_{r,n}} = \frac{1}{r_n}\sum_{r=1}^{r_n} \delta_{P_{r,n}} = \mu_{E_n}. \]
Combining this equality with Proposition \ref{prop 1} provides the limit \[u_n= \frac{1}{m_n}\sum_{r=1}^{m_n} f(P_{r,n})  =\frac{1}{\#E_n}\sum_{\alpha\in E_n} f(\alpha) \to \int f d\nu_M\] since the sequence $(P_{1,n})$ is strict by assumption.
Thus $u_n\geq (1-\varepsilon)\int f d\nu_M$ for all $n$ large enough. 
The lemma follows by noticing that the construction of $f$ implies the inequalities $u_n \leq \frac{1}{m_n}\#\mathcal{K}_n(V)$ and $\int f d\nu_M \geq \nu_M(W)=(\varepsilon/2\pi)^M$. 
\end{proof}

\section{A crucial subsequence} \label{section 4}
To prove our Theorem \ref{thm 2}, we need to extract from $(x_n)$ a "good" subsequence whose construction is the aim of this section. 

Recall that we fixed an integer $N\in\N$ and a generating set $\{\alpha_1,\dots,\alpha_b\}$ of the torsion-free part of $\Gamma$. 
Let $(x_n)$ be a sequence of $l_N(\Gamma)$ (see the introduction for a definition).
Each term can express as \[x_n= \sum_{j=1}^N x_{j,n}\prod_{l=1}^b \alpha_l^{k_{j,l,n}/m_n}\]
with $x_{j,n}\in K_\Gamma=\Q(\mu_\infty, \Gamma), m_n \in\N$ and $k_{j,l,n}\in \Z$. 

By convention, a sum indexed by the empty set is always $0$. 

\begin{lmm} \label{cor 3}
There exists a subsequence $(x_{\psi(n)})$ of $(x_n)$ satisfying the following: for all $l\in\{1,\dots,b\}$, there is a set $J_l\subset \{1,\dots,N\}$ such that 
\begin{enumerate} [a)]
\item the sequence of terms $(\zeta_{m_{\psi(n)}}^{k_{j,l,\psi(n)}})_{j\in J_l}$ is strict unless $J_l$ is empty; 
\item for all $j\in\{1,\dots,N\}$, there are an integer $\lambda^{(j,l)}\in \Z\backslash \{0\}$ and a tuple $(\lambda_m^{(j,l)})_{m\in J_l}\in \Z^{\#J_l}$ such that for all $n$,  \[ \lambda^{(j,l)}k_{j,l,\psi(n)}+\sum_{m\in J_l} \lambda_m^{(j,l)} k_{m,l,\psi(n)}\equiv 0 \;  (m_{\psi(n)}). \] 
\end{enumerate} 
\end{lmm}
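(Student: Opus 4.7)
My plan is a descent, performed separately for each $l \in \{1, \ldots, b\}$: starting from $S = \{1, \ldots, N\}$, I repeatedly test whether the tuple-valued sequence $(\zeta_{m_n}^{k_{j,l,n}})_{j \in S}$ is strict; when it is not, I extract a subsequence witnessing a non-trivial multiplicative relation among the coordinates and delete one index from $S$. The terminal $S$ becomes $J_l$, and the relations collected along the way yield property b) after a back-substitution.

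Fix $l$ and write $P_n(S) = (\zeta_{m_n}^{k_{j,l,n}})_{j \in S}$. Put $S_0 = \{1, \ldots, N\}$. At step $i$: if $(P_n(S_i))_n$ is strict, halt with $J_l = S_i$. Otherwise, by definition of non-strictness some proper algebraic subgroup of $\G_m^{\#S_i}$ contains $P_n(S_i)$ for infinitely many $n$, and since every such subgroup is contained in the kernel of a non-trivial monomial character $z \mapsto z^{\mathbf{a}}$, there exists $\mathbf{a}^{(i)} = (a^{(i)}_j)_{j \in S_i} \in \Z^{S_i} \setminus \{0\}$ and an infinite set of $n$ along which $\sum_{j \in S_i} a^{(i)}_j k_{j,l,n} \equiv 0 \pmod{m_n}$. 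Pass to this subsequence, pick any $j_i \in S_i$ with $a^{(i)}_{j_i} \neq 0$, and set $S_{i+1} = S_i \setminus \{j_i\}$. Since $\#S_i$ strictly decreases, the procedure halts after at most $N$ iterations, establishing a). Running the construction in succession for $l = 1, 2, \ldots, b$, each time starting from the subsequence built at the previous stage, yields a single final subsequence valid for all $l$, since strictness of the already-processed tuples is preserved under further extraction.

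For b), the case $j \in J_l$ is trivial: take $\lambda^{(j,l)} = 1$, $\lambda^{(j,l)}_j = -1$, the other $\lambda^{(j,l)}_m = 0$. For $j = j_i \notin J_l$, write $S_i = J_l \cup \{j_i, j_{i+1}, \ldots, j_{k-1}\}$, where $S_k = J_l$, and proceed by descending induction on $i$. The step-$i$ congruence reads
\[a^{(i)}_{j_i}\, k_{j_i,l,n} + \sum_{p > i} a^{(i)}_{j_p}\, k_{j_p,l,n} + \sum_{j \in J_l} a^{(i)}_j\, k_{j,l,n} \equiv 0 \pmod{m_n},\]
and is already in the required form at $i = k-1$ with leading coefficient $a^{(k-1)}_{j_{k-1}} \neq 0$. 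For $i < k-1$, multiply the above by $\prod_{p > i} \lambda^{(j_p, l)}$ and substitute the inductively available relations $\lambda^{(j_p, l)} k_{j_p, l, n} \equiv -\sum_{j \in J_l} \lambda^{(j_p, l)}_j k_{j, l, n}$ to eliminate each $k_{j_p,l,n}$ with $p > i$. The resulting congruence has the shape required in b) with leading coefficient $\lambda^{(j_i, l)} = a^{(i)}_{j_i} \prod_{p > i} \lambda^{(j_p, l)}$, a product of nonzero integers. The case $J_l = \emptyset$ is handled uniformly via the empty-sum convention.

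The main obstacle I anticipate is exactly this back-substitution: one must check that the elimination of the intermediate $k_{j_p,l,n}$ terms does not inadvertently kill the coefficient of $k_{j_i,l,n}$. The multiplicative structure of the elimination (multiplying, not adding, the $\lambda^{(j_p, l)}$ into the $i$-th relation) guarantees this automatically, since the $k_{j_i,l,n}$ term is untouched throughout, and its coefficient remains the product $a^{(i)}_{j_i} \prod_{p > i} \lambda^{(j_p, l)}$ of nonzero integers.
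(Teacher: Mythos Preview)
Your proof is correct, but it proceeds in the opposite direction from the paper's. The paper builds $J_l$ by \emph{ascent}: starting from $J_{l,0}=\emptyset$, at step $t$ it tests whether adjoining the index $t$ to $J_{l,t-1}$ preserves strictness; if so it adds $t$, if not it passes to a subsequence witnessing the failure and leaves $J_{l,t}=J_{l,t-1}$. The payoff of this ordering is that whenever an index $j$ is rejected, the non-trivial relation obtained involves only $j$ and indices in $J_{l,j-1}\subseteq J_l$, so property b) comes out directly with no back-substitution; the price is that one must then argue separately (using strictness of the already-built $J_{l,j}$) that the coefficient $\lambda^{(j,l)}$ of $k_{j,l,n}$ in that relation is nonzero. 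Your descent from $S_0=\{1,\dots,N\}$ reverses the trade: the nonvanishing of the leading coefficient is automatic (it is the coordinate along which you chose to delete), but the relation at step $i$ may involve later-deleted indices $j_{i+1},\dots,j_{k-1}$, forcing the inductive elimination you carry out. Both bookkeeping devices are clean; the paper's avoids the substitution, yours avoids the nonvanishing argument.
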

\begin{proof}
We compare the elements in $\R^2$ with the lexicographical order $\preceq$. 

Construct recursively sets $J_{l,t}\subset \{1,\dots,N\}$ and functions $\psi_{l,t} : \N\to\N$, where $(l,t)$ ranges over all elements of $I=\{1,\dots,b\} \times \{0,\dots,N\}$, as follows: 
If $t=0$, then $J_{l,0}$ is empty and $\psi_{l,0}$ is either the identity if $l=1$ or $\psi_{l-1,N}$ if $l>1$. 
Assume that $t\geq 1$. 
If the sequence of terms $\left(\zeta_{m_{\psi_{l,t-1}(n)}}^{k_{j,l,\psi_{l,t-1}(n)}}\right)_{j\in J_{l,t-1} \cup \{t\}}$ is strict, then we put $J_{l,t}=J_{l,t-1}\cup \{t\}$ and $\psi_{l,t}=\psi_{l,t-1}$. 
If not, then put $J_{l,t}=J_{l,t-1}$. 
By definition of a strict sequence, there are a proper algebraic subgroup $T_{l,t}$ of $\overline{\Q}^*$ and a subsequence $(x_{\psi_{l,t}(n)})$ of $(x_{\psi_{l,t-1}(n)})$ such that $u_{l,t,n}=\left(\zeta_{m_{\psi_{l,t}(n)}}^{k_{j,l,\psi_{l,t}(n)}}\right)_{j\in J_{l,t} \cup \{t\}}\in T_{l,t}$ for all $n$. 

From this construction, we easily check by induction that for all $(l,t)\in I$, either $J_{l,t}$ is empty or the sequence of terms $v_{l,t,n}=\left(\zeta_{m_{\psi_{l,t}(n)}}^{k_{j,l,\psi_{l,t}(n)}}\right)_{j\in J_{l,t}}$ is strict.  

Let $(i,j)\in I$. 
Note that $(x_{\psi_{l,t'}(n)})$ is a subsequence of $(x_{\psi_{l,t}(n)})$ if $t'\geq t$. 
As $\psi_{l,0}=\psi_{l-1,N}$ if $l>1$, an easy induction proves that $(x_{\psi_{l', t'}(n)})$ is a subsequence of $(x_{\psi_{l,t}(n)})$ for all $(l',t')\succeq (l,t)$.
In particular, $(x_{\psi_{b,N}(n)})$ is a subsequence of $(x_{\psi_{l,t}(n)})$.

Let $l\in \{1,\dots,b\}$ and show that the lemma holds with $J_l=J_{l,N}$ and $\psi=\psi_{b,N}$.

$a)$: By the foregoing, either $J_l$ is empty or $(v_{l,N,n})$ is strict. 
Item $a)$ follows since $(x_{\psi(n)})$ is a subsequence of $(x_{\psi_{l,N}(n)})$.

$b)$: If $j\in J_l$, then we get $b)$ by taking $\lambda^{(j,l)}=1, \lambda_j^{(j,l)} = -1$ and $\lambda_m^{(j,l)}=0$ if $m\neq j$. 
If $j\notin J_l$, then $j\notin J_{l,j}$ since $J_{l,j}\subset J_l$. 
By construction of $J_{l,j}$, it means that $u_{l,j,n}\in T_{l,j}$ for all $n$. 
Consequently, \cite[Chapter 3, §3, Theorem 5]{OnishchikVinberg} says us that there exists a tuple $\boldsymbol{\lambda}=(\lambda_m^{(j,l)})_{m\in J_{l,j}\cup \{j\}}\in \Z^{1+\#J_{l,j}}\backslash \{(0,\dots,0)\}$ such that for all $n$,  
\begin{equation} \label{eq pro}
\lambda_j^{(j,l)}k_{j,l,\psi_{l,j}(n)}+\sum_{m\in J_{l,j}} \lambda_m^{(j,l)} k_{m,l,\psi_{l,j}(n)} \equiv 0 \; (m_{\psi_{l,j}(n)}).
\end{equation}
To get $b)$, it remains to prove that $\lambda_j^{(j,l)}\neq 0$, which is clear if $J_{l,j}$ is empty since $\boldsymbol{\lambda}\neq 0$. 
If not, then the sequence $(v_{l,j,n})$ is strict. 
In particular, $v_{l,j,n}\in T_{l,j}$ for only finitely many $n$. 
Once again, \cite[Chapter 3, §3, Theorem 5]{OnishchikVinberg} tells us that the congruence $\sum_{m\in J_{l,j}} \lambda_m^{(j,l)} k_{m,l,\psi_{l,j}(n)} \equiv 0 \; (m_{\psi_{l,j}(n)})$ holds for only finitely many $n$, and \eqref{eq pro} proves that $\lambda_j^{(j,l)}\neq 0$. 
This completes the proof. 
\end{proof}

Put $O=(0,0)$.
For a point $P\in \R^2$ with affix $z$, we set $\overrightarrow{z}=\overrightarrow{OP}$. 
Next define $(\overrightarrow{z_1}, \overrightarrow{z_2})$ to be the angle formed by nonzero vectors $\overrightarrow{z_1}$ and $\overrightarrow{z_2}$. 
If $z_1=0$ or $z_2=0$, we write $(\overrightarrow{z_1}, \overrightarrow{z_2})=0$. 
We can now construct our sequence $(x_{\Phi(n)})$.
 
\begin{lmm} \label{lmm 521}
We keep the notation of Lemma \ref{cor 3}.
Put $\theta= \prod_{j=1}^N \prod_{l=1}^b \vert \lambda^{(j,l)} \vert\in\N, \Lambda_m^{(j,l)}= - \theta \lambda_m^{(j,l)}/\lambda^{(j, l)}\in\Z$ and $K_{j,l,\psi(n)}=\sum_{m\in J_l} \Lambda_m^{(j,l)} k_{m,l,\psi(n)}$.
Then there exist a subsequence $(x_{\Phi(n)})$ of $(x_{\psi(n)})$ and a subset $I\subset\{1,\dots,N\}$ such that 
\begin{enumerate} [a)]
\item for all $n$, we have \[x_{\Phi(n)}= \sum_{j\in I} a_{j,\Phi(n)}\prod_{l=1}^b \alpha_l^{K_{j,l,\Phi(n)}/(\theta m_{\Phi(n)})}= \sum_{j\in I} z_{j, \Phi(n)},\] where $a_{j,\Phi(n)}\in K_\Gamma(\mathcal{F}^{1/\theta})$ and $z_{j,\Phi(n)} = a_{j,\Phi(n)}\prod_{l=1}^b \alpha_l^{K_{j,l,\Phi(n)}/(\theta m_{\Phi(n)})}$.
\item the tuples $(K_{j,1,\Phi(n)}, \dots, K_{j,b,\Phi(n)})$ are pairwise distinct when $j$ ranges over all elements of $I$; 
\item the sequence  $(((\overrightarrow{z_{i,\Phi(n)}}, \overrightarrow{z_{j,\Phi(n)}}))_{i, j \in I})$ converges as $n\to +\infty$.
\end{enumerate} 
\end{lmm}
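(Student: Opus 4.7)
The plan has three steps: first, use the congruences from Lemma \ref{cor 3}(b) to replace $\prod_l\alpha_l^{k_{j,l,\psi(n)}/m_{\psi(n)}}$ with $\prod_l\alpha_l^{K_{j,l,\psi(n)}/(\theta m_{\psi(n)})}$ up to a factor in $K_\Gamma(\mathcal{F}^{1/\theta})$, which one absorbs into the coefficient; second, collapse indices $j$ giving the same tuple $(K_{j,1,\psi(n)},\dots,K_{j,b,\psi(n)})$ by a pigeonhole on the finitely many partitions of $\{1,\dots,N\}$; third, extract a convergent subsequence of pairwise angles by Bolzano--Weierstrass.

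For the first step, fix $j$ and $l$; since $|\lambda^{(j,l)}|$ divides $\theta$ by construction, multiplying the congruence of Lemma \ref{cor 3}(b) by the nonzero integer $\theta/\lambda^{(j,l)}$ yields
\begin{equation*}
\theta\,k_{j,l,\psi(n)}\;\equiv\;K_{j,l,\psi(n)}\pmod{(\theta/|\lambda^{(j,l)}|)\,m_{\psi(n)}},
\end{equation*}
so writing $\theta\,k_{j,l,\psi(n)}-K_{j,l,\psi(n)}=(\theta/|\lambda^{(j,l)}|)\,m_{\psi(n)}\,s_{j,l,n}$ with $s_{j,l,n}\in\Z$, and choosing roots compatibly so that $\alpha_l^{1/m_{\psi(n)}}=(\alpha_l^{1/(\theta m_{\psi(n)})})^{\theta}$, one obtains
\begin{equation*}
\alpha_l^{k_{j,l,\psi(n)}/m_{\psi(n)}}=\alpha_l^{K_{j,l,\psi(n)}/(\theta m_{\psi(n)})}\cdot\alpha_l^{s_{j,l,n}/|\lambda^{(j,l)}|},
\end{equation*}
whose second factor lies in $K_\Gamma(\mathcal{F}^{1/\theta})$ because $|\lambda^{(j,l)}|$ divides $\theta$. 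Absorbing the product over $l$ of these correcting factors into $x_{j,\psi(n)}$ produces coefficients $a_{j,\psi(n)}\in K_\Gamma(\mathcal{F}^{1/\theta})$ realizing $x_{\psi(n)}=\sum_{j=1}^N a_{j,\psi(n)}\prod_l\alpha_l^{K_{j,l,\psi(n)}/(\theta m_{\psi(n)})}$.

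For the second step, the equivalence relation $j\sim_n j'\iff(K_{j,l,\psi(n)})_l=(K_{j',l,\psi(n)})_l$ defines a partition of $\{1,\dots,N\}$; there being only finitely many such partitions, I extract a subsequence along which $\sim_n$ is a fixed relation $\sim$, and let $I\subset\{1,\dots,N\}$ be a system of representatives. For $j\in I$, replacing $a_{j,\psi(n)}$ by $\sum_{j'\sim j}a_{j',\psi(n)}$ collapses the sum as in (a), while (b) is automatic from the construction of $\sim$. Finally, each pairwise angle takes values in the compact torus $\R/2\pi\Z$ (with value $0$ by convention when one vector is zero), so the tuple of all $|I|^2$ angles lives in $(\R/2\pi\Z)^{I\times I}$ and Bolzano--Weierstrass gives the further extraction $\Phi$ required by (c). The only delicate step is the first one, where one has to verify carefully that each $|\lambda^{(j,l)}|$ divides $\theta$, that the modulus of the congruence becomes $(\theta/|\lambda^{(j,l)}|)m_{\psi(n)}$ after multiplication, and that the residual root $\alpha_l^{s_{j,l,n}/|\lambda^{(j,l)}|}$ genuinely lies in $K_\Gamma(\mathcal{F}^{1/\theta})$; steps two and three are then standard finiteness and compactness arguments.
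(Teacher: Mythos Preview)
Your proof is correct and follows essentially the same three-step route as the paper: rewrite exponents using the congruence of Lemma~\ref{cor 3}(b), collapse indices with coinciding $K$-tuples along a subsequence, then extract convergent angles by Bolzano--Weierstrass. The only cosmetic differences are that the paper works modulo $m_{\psi(n)}$ (obtaining a correcting factor $\alpha_l^{v_{j,l,\psi(n)}/\theta}$) rather than your sharper modulus $(\theta/|\lambda^{(j,l)}|)m_{\psi(n)}$, and extracts a subsequence on which the representative set $I_{\psi(n)}$ is constant rather than the full partition; both variants are equally valid.
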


\begin{proof}
Let $l\in\{1,\dots,b\}$, and let $j\in\{1,\dots,N\}$. 
A small calculation involving Lemma \ref{cor 3} $b)$ gives $k_{j,l,\psi(n)}= (v_{j,l,\psi(n)} m_{\psi(n)} + K_{j,l,\psi(n)})/\theta$ for some $v_{j,l,\psi(n)}\in \Z$. 
Thus 
\begin{align*} 
x_{\psi(n)} & = \sum_{j=1}^N x_{j, \psi(n)} \prod_{l=1}^b \alpha_l^{k_{j,l,\psi(n)}/m_{\psi(n)}} \\ 
& = \sum_{j=1}^N \left(x_{j, \psi(n)} \prod_{l=1}^b \alpha_l^{v_{j,l,\psi(n)}/\theta}\right)\prod_{l=1}^b \alpha_l^{K_{j,l,\psi(n)}/(\theta m_{\psi(n)})}. 
\end{align*}
Note that $x_{j, \psi(n)} \prod_{l=1}^b \alpha_l^{v_{j,l,\psi(n)}/\theta}\in K_\Gamma(\mathcal{F}^{1/\theta})$. 
If two tuples $(K_{i,1,\psi(n)}, \dots, K_{i,b,\psi(n)})$ and $(K_{j,1,\psi(n)}, \dots, K_{j,b,\psi(n)})$ are equal, we can then group the $i$-th and the $j$-th term in the last sum above into a single. 
By repeating this process as much as possible, we construct a set $I_{\psi(n)}\subset \{1,\dots,N\}$ such that \[x_{\psi(n)}= \sum_{j\in I_{\psi(n)}} a_{j,\psi(n)}\prod_{l=1}^b \alpha_l^{K_{j,l,\psi(n)}/(\theta m_{\psi(n)})},\] where $a_{j,\psi(n)}\in K_\Gamma(\mathcal{F}^{1/\theta})$ and the tuples $(K_{j,1,\psi(n)}, \dots, K_{j,b,\psi(n)})$ are pairwise distinct when $j$ runs over all elements of $I_{\psi(n)}$.
As $I_{\psi(n)}\subset\{1,\dots, N\}$, there is a subsequence $(x_{\phi(n)})$ of $(x_{\psi(n)})$ for which the sequence $(I_{\phi(n)})$ is constant, say to $I$. 

By definition, $(\overrightarrow{z_{i,\phi(n)}}, \overrightarrow{z_{j,\phi(n)}})\in [0, 2\pi[$ for all $i,j\in I$ and all $n$. 
Hence Bolzano-Weierstrass theorem ensures us the existence of a subsequence $(x_{\Phi(n)})$ of $(x_{\phi(n)})$ such that the sequence $(((\overrightarrow{z_{i,\Phi(n)}}, \overrightarrow{z_{j,\Phi(n)}}))_{i,j \in I})$ converges as $n\to +\infty$. 
This proves $c)$. 
Finally, we directly get $a)$ and $b)$ from the construction of $I_{\Phi(n)}=I$. 
\end{proof}

\section{Proof of Theorem \ref{thm 2}} \label{section 5}
Recall that $l_N(\Gamma)$ is the set of $x\in\Q(\Gamma_{\mathrm{div}})$ that can express as $x=\sum_{j=1}^N x_j\gamma_j$ with $x_j\in K_\Gamma=\Q(\mu_\infty, \Gamma)$ and $\gamma_j\in \Gamma_{\mathrm{div}}$. 
Clearly $\tau x\in l_N(\Gamma)$ for all $\tau\in \Gal(\overline{\Q}/K_\Gamma)$ since any conjugate of $\gamma_j$ over $K_\Gamma$ is equal to $\gamma_j$ up to root of unity. 

Recall that $O_\Gamma(\alpha)$ and $d_{\Gamma, \varepsilon}(\alpha)$ have been defined in Subsection \ref{subsection 1.2}. 
Note that $d_{\Gamma, \varepsilon}(\alpha)=d_{\Gamma, \varepsilon}(\beta)$ if $\alpha$ and $\beta$ are conjugates over $K_\Gamma$, that is if $\beta\in O_\Gamma(\alpha)$. 

The goal of this section is to prove the following. 
\begin{thm} \label{thm 5}  
Let $(x_n)$ be a sequence of $l_N(\Gamma)$ such that $d_{\Gamma, \varepsilon}(y_n)\underset{n\to+\infty}{\longrightarrow} 1$ for all $\varepsilon>0$.
Let $(x_{\Phi(n)})$ be the sequence constructed in Lemma \ref{lmm 521} from which we keep the notation. 
Then
\begin{enumerate} [a)]
\item  $\sum_{j\in I} \vert z_{j,\Phi(n)}\vert^2 \to 1$;
\item there exist $\#I-1$ elements $j\in I$ such that $z_{j,\Phi(n)} \to 0$.
\end{enumerate}
\end{thm}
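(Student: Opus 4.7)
The plan is to combine Kummer theory with an orthogonality/equidistribution analysis on the Galois group. Keeping the notation of Lemma~\ref{lmm 521}, set $m_n := m_{\Phi(n)}$, $L_n := K_\Gamma(\mathcal{F}^{1/(\theta m_n)})$, and introduce $G_n := \Gal(L_n/K_\Gamma)$ together with the subgroup $H_n \subset G_n$ that fixes $K_\Gamma(\mathcal{F}^{1/\theta})$. By Lemma~\ref{lmm 16} we have $G_n \cong \prod_l \Z/(\theta m_n/c_l)\Z$ for integers $c_l \le C$ independent of $n$, and a short computation shows that $H_n$ corresponds to $\{(r_l):\,\theta/c_l \mid r_l\}$, so $H_n \cong (\Z/m_n\Z)^b$. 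Each $a_{j,\Phi(n)}$ lies in $K_\Gamma(\mathcal{F}^{1/\theta})$ and is therefore fixed by $H_n$, so for $\sigma \in H_n$ corresponding to $(s_1,\dots,s_b)$ one will obtain
$$\sigma x_{\Phi(n)} = \sum_{j \in I} z_{j,\Phi(n)}\, \chi_{j,n}(\sigma), \qquad \chi_{j,n}(\sigma) = \prod_{l=1}^b \zeta_{m_n}^{s_l K_{j,l,\Phi(n)}}.$$
Since $[G_n : H_n]$ is bounded and $G_n$ is abelian, the hypothesis $d_{\Gamma,\varepsilon}(x_{\Phi(n)})\to 1$ will transfer, via coset averaging, to the same concentration of $|y|^2$ over the $H_n$-orbit of $x_{\Phi(n)}$.

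For part (a), the key step is to prove that for $n$ large the characters $\chi_{j,n}$, $j \in I$, are pairwise distinct. By Lemma~\ref{lmm 521}(b) the integer tuples $(K_{j,1,\Phi(n)},\dots,K_{j,b,\Phi(n)})$ are pairwise distinct; writing $K_{j,l,\Phi(n)} - K_{i,l,\Phi(n)} = \sum_{q \in J_l} (\Lambda_q^{(j,l)}-\Lambda_q^{(i,l)})\, k_{q,l,\Phi(n)}$ for a suitable $l$ with non-zero integer coefficient vector, the strictness of $(\zeta_{m_n}^{k_{q,l,\Phi(n)}})_{q \in J_l}$ from Lemma~\ref{cor 3}(a), together with \cite[Chapter 3, §3, Theorem 5]{OnishchikVinberg}, will force this combination to be non-zero modulo $m_n$ for all large $n$, hence $\chi_{j,n}\ne\chi_{i,n}$. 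Character orthogonality on $H_n$ then gives
$$\frac{1}{|H_n|}\sum_{\sigma \in H_n} |\sigma x_{\Phi(n)}|^2 = \sum_{j\in I} |z_{j,\Phi(n)}|^2.$$
Combining this identity with the trivial upper bound $|\sigma x_{\Phi(n)}|^2 \le \#I \sum_j|z_{j,\Phi(n)}|^2$ and the concentration $|y|^2 \in [1-\varepsilon,1+\varepsilon]$ for most $y$ in the $H_n$-orbit, a two-sided sandwich will yield $\sum_j|z_{j,\Phi(n)}|^2 \to 1$, proving (a).

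For part (b) I would argue by contradiction. If fewer than $\#I - 1$ indices satisfy $z_{j,\Phi(n)} \to 0$ then two indices $j_1 \ne j_2$ and some $\delta > 0$ admit a subsequence with $|z_{j_i,\Phi(n)}| \ge \delta$; using (a), Lemma~\ref{lmm 521}(c) and compactness I refine further so that $|z_{j,\Phi(n)}| \to t_j^*$, $z_{j,\Phi(n)}/|z_{j,\Phi(n)}| \to \xi_j^*$ for each $j$ with $t_j^* > 0$, $\sum_j(t_j^*)^2 = 1$ and $t_{j_1}^*, t_{j_2}^* \ge \delta$. The normalized character tuple $(\chi_{j,n}(\sigma)/\chi_{j_0,n}(\sigma))_{j \ne j_0}$ (for a fixed $j_0 \in I$) corresponds to the sequence of points $\bigl(\zeta_{m_n}^{K_{j,l,\Phi(n)}-K_{j_0,l,\Phi(n)}}\bigr)_{j \ne j_0,\,l}$ on a power torus; an adaptation of Proposition~\ref{prop 1} and Corollary~\ref{lmm 9} will yield equidistribution of $(\chi_{j,n}(\sigma)/\chi_{j_0,n}(\sigma))_{j \ne j_0}$ on $T^{\#I - 1}$ as $\sigma$ ranges over $H_n$. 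Since two of the $t_j^*$'s are positive, the continuous function $(w_j) \mapsto \bigl|\sum_j t_j^* \xi_j^* w_j\bigr|^2$ on $T^{\#I - 1}$ is non-constant, so it will avoid $[1-\varepsilon_0,1+\varepsilon_0]$ on a non-empty open set for some $\varepsilon_0 > 0$. By equidistribution this set will carry a positive proportion of $\sigma \in H_n$ with $|\sigma x_{\Phi(n)}|^2$ outside $[1-\varepsilon_0,1+\varepsilon_0]$, contradicting the concentration transferred from $d_{\Gamma,\varepsilon_0}(x_{\Phi(n)})\to 1$.

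The main technical obstacle will be the joint strictness underpinning (b): the sequence $\bigl(\zeta_{m_n}^{K_{j,l,\Phi(n)}-K_{j_0,l,\Phi(n)}}\bigr)_{j \ne j_0,\,l}$ must be verified strict in $(\overline{\Q}^*)^{(\#I - 1) b}$. A limiting algebraic sub-torus would produce a non-trivial integer linear combination of the differences vanishing modulo $m_n$ infinitely often; tracing through the definition of the $K$'s, this would contradict either the strictness of Lemma~\ref{cor 3}(a) on each factor $J_l$, or the distinctness of the $K$-tuples from Lemma~\ref{lmm 521}(b). Carrying this bookkeeping through carefully will be the technical heart of the argument.
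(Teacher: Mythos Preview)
Your approach to part (a) is correct and in fact cleaner than the paper's. The paper establishes (a) through an elaborate construction (Proposition~\ref{prop 3}, the finite set $Z$, and the algebraic identity of Lemma~\ref{lmm 11}), essentially building by hand the cancellation of cross terms that your character orthogonality gives in one line. The minor inaccuracies in your description of $H_n$ (it is $\prod_l\Z/(m_n/c_{l,n})\Z$ with bounded $c_{l,n}$, not $(\Z/m_n\Z)^b$) are harmless: distinctness of the $\chi_{j,n}$ follows from $\boldsymbol{\Lambda}_{i,j}\neq 0$ (this is the paper's Lemma~\ref{lmm 22distinct}) together with the strictness in Lemma~\ref{cor 3}(a), after freezing the bounded $c_{l,n}$ along a further subsequence.

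Your part (b), however, has a genuine gap. The joint strictness you need --- that the tuple $\bigl(\zeta_{m_n}^{K_{j,l,n}-K_{j_0,l,n}}\bigr)_{j\neq j_0,\,l}$ be strict --- is \emph{false} in general. Unwinding the definitions, $K_{j,l,n}-K_{j_0,l,n}=\sum_{q\in J_l}(\Lambda_q^{(j,l)}-\Lambda_q^{(j_0,l)})\,k_{q,l,n}$, so an integer relation $\sum_{j\neq j_0}a_j\,(K_{j,l,n}-K_{j_0,l,n})\equiv 0\pmod{m_n}$ for all $l$ holds \emph{identically in $n$} as soon as the fixed integer vectors $\boldsymbol{\Lambda}_j-\boldsymbol{\Lambda}_{j_0}\in\Z^{\#Y}$, $j\neq j_0$, are $\Z$-linearly dependent. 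Nothing in Lemma~\ref{cor 3} or Lemma~\ref{lmm 521} prevents this: with $b=1$, $J_1=\{1\}$ and $\boldsymbol{\Lambda}_1=0,\ \boldsymbol{\Lambda}_2=1,\ \boldsymbol{\Lambda}_3=2$, the $K$-tuples are pairwise distinct yet $(\chi_{3,n}/\chi_{1,n})=(\chi_{2,n}/\chi_{1,n})^2$, so the image sits on a curve in $T^2$ and equidistribution on the full torus fails. Your final paragraph claims this would contradict strictness or distinctness, but it contradicts neither.

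The paper handles precisely this obstruction. Rather than equidistributing on $T^{\#I-1}$, it groups the cross terms of $\vert\sigma x_n\vert^2$ according to the value $\mathbf{c}_k=\boldsymbol{\Lambda}_{i,j}$ and shows (Lemma~\ref{lmm 14}, via Lemma~\ref{lmm 13} and analytic continuation) that each grouped sum $\sum_{(i,j)\in E_k}\vert z_{i,n}\vert\vert z_{j,n}\vert\, e^{\zeta_4 L_{i,j}}\to 0$. The conclusion then comes from a lexicographic extremal argument: among indices $j$ with $z_{j,n}\not\to 0$, pick $i_0,j_0$ with $\boldsymbol{\Lambda}_{i_0}$ minimal and $\boldsymbol{\Lambda}_{j_0}$ maximal; then $(i_0,j_0)$ is the \emph{only} surviving pair in its group $E_k$, forcing $\vert z_{i_0,n}\vert\vert z_{j_0,n}\vert\to 0$, a contradiction. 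Your scheme can be repaired along these lines --- replace ``equidistribute on $T^{\#I-1}$'' by ``the limiting Fourier coefficient at the extremal frequency $\boldsymbol{\Lambda}_{j_0}-\boldsymbol{\Lambda}_{i_0}$ is nonzero'' --- but the step as written does not go through.
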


\textit{Proof of Theorem \ref{thm 2} by assuming Theorem \ref{thm 5}.}
Set $v_n = \max_{x\in O_\Gamma(y_n)} \{\vert  \vert x\vert^2 -1\vert\}$. 
Let $l\in\R\cup\{+\infty\}$ be an accumulation point of $(v_n)$ and show that $l=0$, which will finish the proof of our theorem. 
Without loss of generality, assume that $v_n\to l$. 

Pick $x_n\in O_\Gamma(y_n)$ such that $v_n= \vert \vert x_n\vert^2 -1 \vert$. 
By the preamble of this section, we easily infer that $x_n\in l_N(\Gamma)$ for all $n$ and $d_{\Gamma, \varepsilon}(x_n)=d_{\Gamma, \varepsilon}(y_n)\to 1$ for all $\varepsilon>0$. 
Thanks to Lemma \ref{lmm 521} $a)$, we have $x_{\Phi(n)} = \sum_{j\in I} z_{j, \Phi(n)}$. 
As a direct consequence of Theorem \ref{thm 5}, we get $\vert x_{\Phi(n)}\vert^2\to 1$.
But then $v_{\Phi(n)}\to 0$; whence $l=0$. 
\qed \\

For the rest of this section, we keep (and fix) the same notation as Theorem \ref{thm 5}.
In order to simplify our explanation, we assume that $\Phi$ is the identity.
Set $G_n=\Gal(K_\Gamma(\mathcal{F}^{1/(\theta m_n)})/K_\Gamma)$ and $H_n=\Gal(K_\Gamma(\mathcal{F}^{1/(\theta m_n)})/K_\Gamma(\mathcal{F}^{1/\theta}))$. 
 Lemma \ref{lmm 16} applied to $m=\infty, d_1=\dots=d_b=\theta m_n$ and $L=\Q(\mathcal{F}^{1/\theta})$ gives
\begin{equation} \label{eq description H n}
H_n= \prod_{l=1}^b \Z/(m_n/c_{l,n})\Z,
\end{equation}
 where $c_{1,n},\dots, c_{b,n}\in\N$ are bounded from above by a constant depending only on $\Gamma$ and $\theta$. 
Next for all $m\in \N$, set $. : \R^m \times \R^m \to \R$ to be the dot product on $\R^m$. 
Finally, put $\mathbf{K}_{j,n}=(K_{j,1,n},\dots, K_{j,b,n})$ and for any $\mathbf{r}=(r_1,\dots,r_b)\in \R^b$, we set \[B_{\mathbf{r},i,j,n} = (\overrightarrow{z_{i,n}}, \overrightarrow{z_{j,n}})+ \frac{2\pi  \; \mathbf{rc}_n.(\mathbf{K}_{j,n}-\mathbf{K}_{i,n})}{m_n}, \] where $\mathbf{rc}_n=(r_1c_{1,n}, \dots, r_bc_{b,n})$. 

\subsection{Proof of Theorem \ref{thm 5} $a)$} 
We will deduce the limit of Theorem \ref{thm 5} $a)$ thanks to the following equality. 
Recall that $x_n\in K_\Gamma(\mathcal{F}^{1/(\theta m_n)})$ by Lemma \ref{lmm 521} $a)$.

\begin{lmm} \label{lmm 12}
Let $n\in \N$, and let $\sigma=\mathbf{r}\in H_n$.
Then \[\vert \sigma x_n \vert^2 = \sum_{j\in I} \vert z_{j,n} \vert^2 + \sum_{i,j\in I, i\neq j} \vert z_{i,n} \vert \vert z_{j,n } \vert \cos\left( B_{\mathbf{r},i,j,n}\right).\]
\end{lmm}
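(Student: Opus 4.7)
The statement is essentially a direct calculation using the explicit description of the Kummer action provided in Section~\ref{section 1.25}. First I would fix $\sigma = \mathbf{r} = (r_1, \dots, r_b) \in H_n$ and spell out its effect on the generators. By Lemma~\ref{lmm 16} and Remark~\ref{rqu 2}, applied with $L = \Q(\mathcal{F}^{1/\theta})$, $m = \infty$ and $d_1 = \cdots = d_b = \theta m_n$, one has $\sigma(\alpha_l^{1/(\theta m_n)}) = \zeta_{m_n/c_{l,n}}^{r_l}\, \alpha_l^{1/(\theta m_n)}$, so that
\[
\sigma\!\left(\prod_{l=1}^b \alpha_l^{K_{j,l,n}/(\theta m_n)}\right) = \exp\!\left(\frac{2\pi i\, \mathbf{rc}_n \cdot \mathbf{K}_{j,n}}{m_n}\right) \prod_{l=1}^b \alpha_l^{K_{j,l,n}/(\theta m_n)}.
\]
Because $a_{j,n} \in K_\Gamma(\mathcal{F}^{1/\theta})$ is fixed by $\sigma \in H_n$, this gives $\sigma z_{j,n} = e^{i\varphi_{j,n}(\mathbf{r})} z_{j,n}$ with $\varphi_{j,n}(\mathbf{r}) := 2\pi\, \mathbf{rc}_n \cdot \mathbf{K}_{j,n}/m_n$.

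Next I would expand
\[
|\sigma x_n|^2 = \Bigl(\sum_{i \in I} e^{i\varphi_{i,n}(\mathbf{r})} z_{i,n}\Bigr)\Bigl(\sum_{j \in I} e^{-i\varphi_{j,n}(\mathbf{r})} \overline{z_{j,n}}\Bigr) = \sum_{i,j \in I} z_{i,n}\,\overline{z_{j,n}}\, e^{i(\varphi_{i,n}(\mathbf{r}) - \varphi_{j,n}(\mathbf{r}))}.
\]
The diagonal terms $i = j$ contribute exactly $\sum_{j\in I} |z_{j,n}|^2$. For the off-diagonal terms, I would write $z_{i,n}\overline{z_{j,n}} = |z_{i,n}||z_{j,n}|\, e^{-i(\overrightarrow{z_{i,n}}, \overrightarrow{z_{j,n}})}$, using the definition of the oriented angle $(\overrightarrow{z_{i,n}}, \overrightarrow{z_{j,n}}) = \arg z_{j,n} - \arg z_{i,n}$ (with the harmless convention that vanishing summands give $0$). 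Then the $(i,j)$-summand becomes
\[
|z_{i,n}||z_{j,n}|\, \exp\!\bigl(-i(\overrightarrow{z_{i,n}}, \overrightarrow{z_{j,n}}) + i(\varphi_{i,n}(\mathbf{r}) - \varphi_{j,n}(\mathbf{r}))\bigr) = |z_{i,n}||z_{j,n}|\, e^{-iB_{\mathbf{r},i,j,n}},
\]
by definition of $B_{\mathbf{r},i,j,n}$.

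Finally, I would observe the antisymmetry $B_{\mathbf{r},j,i,n} = -B_{\mathbf{r},i,j,n}$, which follows from $(\overrightarrow{z_{j,n}}, \overrightarrow{z_{i,n}}) = -(\overrightarrow{z_{i,n}}, \overrightarrow{z_{j,n}})$ and $\mathbf{K}_{i,n} - \mathbf{K}_{j,n} = -(\mathbf{K}_{j,n} - \mathbf{K}_{i,n})$. Pairing the $(i,j)$ and $(j,i)$ terms and using $e^{-iB} + e^{iB} = 2\cos B$ yields
\[
\sum_{i \neq j} z_{i,n}\overline{z_{j,n}}\, e^{i(\varphi_{i,n}(\mathbf{r}) - \varphi_{j,n}(\mathbf{r}))} = \sum_{\substack{i,j \in I \\ i \neq j}} |z_{i,n}||z_{j,n}|\cos\bigl(B_{\mathbf{r},i,j,n}\bigr),
\]
which combined with the diagonal contribution gives exactly the claimed formula. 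There is no real obstacle here; the only delicate step is bookkeeping the sign conventions for the oriented angle and verifying that the expression $B_{\mathbf{r},i,j,n}$ in the statement matches the one produced by the computation.
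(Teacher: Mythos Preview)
Your proposal is correct and follows essentially the same route as the paper: both compute $\sigma z_{j,n}=z_{j,n}\,\zeta_{m_n}^{\mathbf{rc}_n\cdot\mathbf{K}_{j,n}}$ from the explicit Kummer description and then expand $\lvert \sigma x_n\rvert^2$. The only cosmetic difference is that the paper invokes the ``cosine rule'' and the angle identity $(\overrightarrow{\alpha\zeta_{m_n}^{\lambda}},\overrightarrow{\beta\zeta_{m_n}^{\eta}})\equiv(\overrightarrow{\alpha},\overrightarrow{\beta})+2\pi(\eta-\lambda)/m_n$ directly, whereas you expand via $z_{i,n}\overline{z_{j,n}}$ and then pair $(i,j)$ with $(j,i)$ using the antisymmetry $B_{\mathbf{r},j,i,n}\equiv -B_{\mathbf{r},i,j,n}\pmod{2\pi}$; these are equivalent bookkeeping choices.
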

\begin{proof}
As $\sigma\in H_n$, it therefore fixes the elements of $K_\Gamma(\mathcal{F}^{1/\theta})$. 
Let $j\in I$.
By Lemma \ref{lmm 521} $a)$, we have $z_{j,n}=a_{j,n}\prod_{l=1}^b \alpha_l^{K_{j,l,n}/m_n}$ with $a_{j,n}\in K_\Gamma(\mathcal{F}^{1/\theta})$. 
A small calculation involving \eqref{eq description H n} shows that $\sigma z_{j,n}=z_{j,n} \zeta_{m_n}^{\mathbf{rc}_n.\mathbf{K}_{j,n}}$.
From Lemma \ref{lmm 521} $a)$ and from the cosine rule, we get 
\begin{align*}
\vert \sigma x_n \vert^2 & = \left\vert \sum_{j\in I} \sigma z_{j,n}\right\vert^2 = \left\vert \sum_{j\in I} z_{j,n}\zeta_{m_n}^{\mathbf{rc_n}.\mathbf{K}_{j,n}} \right\vert^2  \\
& =  \sum_{j\in I} \vert z_{j,n} \vert^2 + \sum_{i,j\in I, i\neq j} \vert z_{i,n} \vert \vert z_{j,n}\vert \cos\left( \left(\overrightarrow{ z_{i,n}\zeta_{m_n}^{\mathbf{rc}_n.\mathbf{K}_{i,n}}}, \overrightarrow{ z_{j,n}\zeta_{m_n}^{\mathbf{rc}_n.\mathbf{K}_{j,n}}} \right)\right).
\end{align*}
To conclude, it remains to show the equality 
\begin{equation} \label{eq non repet}
\vert z_{i,n} \vert \vert z_{j,n}\vert \cos\left( \left(\overrightarrow{ z_{i,n}\zeta_{m_n}^{\mathbf{rc}_n.\mathbf{K}_{i,n}}}, \overrightarrow{ z_{j,n}\zeta_{m_n}^{\mathbf{rc}_n.\mathbf{K}_{j,n}}} \right)\right)= \vert z_{i,n} \vert \vert z_{j,n} \vert \cos\left( B_{\mathbf{r},i,j,n}\right), 
\end{equation}
which is obvious if either $ z_{i,n}=0$ or $ z_{j,n}=0$. 
If these complex numbers are nonzero, then \eqref{eq non repet} arises from the chain of equalities modulo $2\pi$ below \[(\overrightarrow{\alpha \zeta_{m_n}^{\lambda}}, \overrightarrow{\beta \zeta_{m_n}^\eta}) \equiv (\overrightarrow{\alpha}, \overrightarrow{\beta \zeta_{m_n}^{\eta-\lambda}})\equiv (\overrightarrow{\alpha}, \overrightarrow{\beta})+ (\overrightarrow{\beta}, \overrightarrow{\beta \zeta_{m_n}^{\eta-\lambda}})\equiv (\overrightarrow{\alpha}, \overrightarrow{\beta}) + \frac{2\pi(\eta-\lambda)}{m_n} \; (2\pi),\] which is valid for all $\alpha,\beta\in \C^*$ and all $\eta,\lambda\in \R$. 
\end{proof}

Fix from now $\varepsilon>0$. 
Let $F_n$ be the set of elements $\sigma\in G_n$ satisfying $1-\varepsilon \leq \vert \sigma x_n\vert ^2 \leq 1+\varepsilon$. 
It is easy to check that $ d_{\Gamma, \varepsilon}(x_n)=\# F_n/\#G_n$. 

\textit{Proof of Theorem \ref{thm 5} when $I$ has cardinality $1$.} 
Clearly $b)$ arises from $a)$. 
Let $j$ be the unique element of $I$. 
Lemma \ref{lmm 12} tells us that $\vert \sigma x_n\vert^2=\vert z_{j,n}\vert^2$ for all $\sigma\in H_n$. 
The fact that $d_{\Gamma, \varepsilon}(x_n)\to 1$ by assumption and that $H_n$ has index $[K_\Gamma(\mathcal{F}^{1/\theta}) : K_\Gamma]$ in $G_n$ provides for all $n$ large enough an element $\sigma_n\in H_n$ belonging to $F_n$. 
In particular, $1-\varepsilon \leq \vert \sigma_n x_n \vert^2 = \vert z_{j,n}\vert^2 \leq 1+\varepsilon$, and so $\vert z_{j,n}\vert^2\to 1$ proving what we desire.
\qed \\ 

We now focus on the case where $I$ has at least $2$ elements.

Lemma \ref{lmm 12} suggests us to construct for all $n$ large enough a "good" $\sigma_n=\mathbf{r}_n\in H_n$ for which we can estimate as precisely as possible the quantities $\vert \sigma_n x_n\vert $ and $\cos(B_{\mathbf{r}_n,i,j,n})$. 
Its construction is the purpose of the next proposition. 

Recall that $J_l$ is defined in Lemma \ref{cor 3}. 
Let $Y$ be the set of couples $(l,m)$ such that $l\in \{1,\dots,b\}$ and $m\in J_l$. 
It is a non-empty set. 
Indeed, otherwise $J_l$ would be empty for all $l$. 
Lemma \ref{lmm 521} then implies $K_{j,l,n}=0$ for all $j,l,n$. 
But this is possible only if $I$ has cardinality $1$ by Lemma \ref{lmm 521} $b)$, a contradiction by the foregoing.

Recall that the integer $\Lambda_m^{(i,l)}$ is defined in Lemma \ref{lmm 521}. 
Then put \[\gamma=\max_{i,j\in I, i\neq j}\left\{\sum_{(l,m)\in Y} \left \vert \Lambda_m^{(j,l)} - \Lambda_m^{(i,l)} \right\vert \right\} +1 \] 
Let $i,j\in I$.
Write $\boldsymbol{\Lambda}_{i,j}=(\Lambda_m^{(j,l)} - \Lambda_m^{(i,l)})_{(l,m)\in Y}$.
Recall that the sequence $((\overrightarrow{z_{i,n}}, \overrightarrow{z_{j,n}}))$ converges by Lemma \ref{lmm 521} $c)$; denote by $L_{i,j}$ its limit. 
Finally, for $\mathbf{x}\in \R^{\#Y}$, we write \[ I_{i,j}(\mathbf{x})=\left[e^{\zeta_4(L_{i,j}+ \boldsymbol{\Lambda}_{i,j}.\mathbf{x} -\gamma\varepsilon)} ;  e^{\zeta_4(L_{i,j}+\boldsymbol{\Lambda}_{i,j}.\mathbf{x} +\gamma\varepsilon)} \right]. \]

\begin{prop} \label{prop 3}
Let $\mathbf{x}\in \R^{\#Y}$.
Then, for all $n$ large enough, there exists $\sigma_{\mathbf{x},  n}=\mathbf{r}_{\mathbf{x},  n}\in H_n$ such that $ 1-\varepsilon \leq \vert \sigma_{\mathbf{x},  n} x_n \vert^2 \leq 1+\varepsilon$.
Furthermore, for each $i,j\in I$ distinct such that $\pm 1\notin I_{i,j}(\mathbf{x})$, we have 
 \begin{multline*}
\cos(\gamma\varepsilon)\cos(L_{i,j} + \boldsymbol{\Lambda}_{i,j}.\mathbf{x})+\sin(\gamma\varepsilon) \geq \\   \cos(B_{\mathbf{r}_{\mathbf{x},  n},i,j,n}) \geq  \cos(\gamma\varepsilon)\cos(L_{i,j} + \boldsymbol{\Lambda}_{i,j}.\mathbf{x}) - \sin(\gamma\varepsilon).
\end{multline*}
\end{prop}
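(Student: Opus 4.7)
The plan is to construct $\mathbf{r}_{\mathbf{x},n}$ by intersecting, inside $H_n$, a ``good-angle'' set on which every $B_{\mathbf{r},i,j,n}$ lies close to the target $L_{i,j} + \boldsymbol{\Lambda}_{i,j}.\mathbf{x}$, with the set $F_n \cap H_n$ on which $1-\varepsilon \leq |\sigma_{\mathbf{r}} x_n|^2 \leq 1+\varepsilon$ already holds. First I would show $\#(F_n \cap H_n)/\#H_n \to 1$: by hypothesis $\#F_n/\#G_n = d_{\Gamma,\varepsilon}(x_n) \to 1$, and $[G_n : H_n] = [K_\Gamma(\mathcal{F}^{1/\theta}) : K_\Gamma]$ is a fixed constant independent of $n$, so $\#(H_n \setminus F_n) \leq \#(G_n \setminus F_n) = o(\#G_n) = o(\#H_n)$.

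To produce the good-angle set, I would apply Corollary~\ref{lmm 9} separately for each $l \in \{1,\dots,b\}$ to the strict sequence $(\zeta_{m_n}^{k_{j,l,n}})_{j \in J_l}$ from Lemma~\ref{cor 3}(a), with parameter $r_l \in \Z/(m_n/c_{l,n})\Z$. Since Lemma~\ref{lmm 16} bounds the $c_{l,n}$'s by a constant depending only on $\Gamma$ and $\theta$, I would partition the indices $n$ according to the finitely many possible tuples $(c_{1,n},\dots,c_{b,n})$, reducing to the case of constant $c_l$'s, and then note that the $c_l$-th power map on $(\overline{\Q}^*)^{\#J_l}$ preserves strictness (the preimage of a proper subgroup under this isogeny is still proper). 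On each piece of the partition, Corollary~\ref{lmm 9} yields a subset $\mathcal{K}_n^{(l)} \subset \Z/(m_n/c_l)\Z$ of density at least $(1-\varepsilon)(\varepsilon/(2\pi))^{\#J_l}$ on which $2\pi r_l c_l k_{m,l,n}/m_n$ lies within $\varepsilon$ of $x_{l,m}$ modulo $2\pi$ for every $m \in J_l$. The product $A_n = \prod_l \mathcal{K}_n^{(l)} \subset H_n$ then has density at least $\rho = (1-\varepsilon)^b(\varepsilon/(2\pi))^{\#Y} > 0$ for all $n$ large enough, so $A_n \cap F_n \cap H_n$ is nonempty, and I pick any $\mathbf{r}_{\mathbf{x},n}$ from this intersection.

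For the angle estimate I would expand
\[
B_{\mathbf{r}_{\mathbf{x},n},i,j,n} - (\overrightarrow{z_{i,n}}, \overrightarrow{z_{j,n}}) = \sum_{(l,m) \in Y} (\Lambda_m^{(j,l)} - \Lambda_m^{(i,l)}) \cdot \frac{2\pi r_l c_{l,n} k_{m,l,n}}{m_n},
\]
apply the good-angle approximation termwise, and use $(\overrightarrow{z_{i,n}}, \overrightarrow{z_{j,n}}) \to L_{i,j}$ from Lemma~\ref{lmm 521}(c). The triangle inequality, together with the definition of $\gamma$ as $1$ plus the maximum $\ell^1$-norm of $\boldsymbol{\Lambda}_{i,j}$ over pairs $i,j \in I$, gives $|B_{\mathbf{r}_{\mathbf{x},n},i,j,n} - (L_{i,j} + \boldsymbol{\Lambda}_{i,j}.\mathbf{x})| \leq \gamma\varepsilon$ modulo $2\pi$ for $n$ large enough. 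Writing this angle as $A + \delta$ with $A = L_{i,j} + \boldsymbol{\Lambda}_{i,j}.\mathbf{x}$ and $|\delta| \leq \gamma\varepsilon$, expanding $\cos(A+\delta) = \cos A \cos \delta - \sin A \sin \delta$ and applying Cauchy--Schwarz with $\cos^2 A + \sin^2 A = 1$ yields $|\cos(A+\delta) - \cos(\gamma\varepsilon) \cos A| \leq \sqrt{(\cos\delta - \cos\gamma\varepsilon)^2 + \sin^2\delta}$, and maximising the radicand over $|\delta| \leq \gamma\varepsilon$ bounds this by $\sin(\gamma\varepsilon)$, which is exactly both cosine inequalities. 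The main technical point is ensuring Corollary~\ref{lmm 9} applies uniformly over the multi-factor group $H_n$: because $c_{l,n}$ may vary with $n$, one must partition as above and verify that strictness survives the $c_l$-th power map; everything after that reduces to the product structure of $H_n$ and elementary trigonometry.
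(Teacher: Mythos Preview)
Your argument is correct and follows the paper's strategy closely: build a ``good-angle'' product set $\prod_l \mathcal{K}_{l,n}(\mathbf{x}) \subset H_n$ via Corollary~\ref{lmm 9}, intersect it with $F_n$ using a density comparison, and then control $B_{\mathbf{r},i,j,n}$ through the expansion in terms of the $k_{m,l,n}$. Your treatment of the varying $c_{l,n}$ by partitioning into finitely many cases is a clean formalisation of what the paper does in one sentence (``As $c_{l,n}$ is bounded \dots\ the sequence is also strict'').

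The one genuine difference is the final trigonometric step. The paper uses the hypothesis $\pm 1 \notin I_{i,j}(\mathbf{x})$ to ensure the cosine is monotone on the arc $I_{i,j}(\mathbf{x})$, then reads off the extrema at the endpoints and applies the addition formula. Your Cauchy--Schwarz argument bypasses this: writing $\cos(A+\delta) - \cos(\gamma\varepsilon)\cos A = (\cos\delta - \cos\gamma\varepsilon)\cos A - \sin\delta\,\sin A$ and bounding by $\sqrt{(\cos\delta - \cos\gamma\varepsilon)^2 + \sin^2\delta} \leq \sin(\gamma\varepsilon)$ works for \emph{every} $i,j$, not just those with $\pm 1 \notin I_{i,j}(\mathbf{x})$. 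So you actually prove a slightly stronger statement than the proposition claims. This is a nice simplification; the paper's monotonicity route is perhaps more geometric but carries the extra hypothesis (which is, however, needed later anyway when the compact $K$ is chosen).
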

\begin{proof}
\underline{First part}:
As $d_{\Gamma,\varepsilon}(x_n)\to 1$, we deduce that for all $n$ large enough,
\begin{equation} \label{eq 9}
d_{\Gamma,\varepsilon}(x_n)= \frac{\# F_n}{\# G_n} > 1-\frac{(1-\varepsilon)^b}{2[K_\Gamma(\mathcal{F}^{1/\theta}) : K_\Gamma]} \left(\frac{\varepsilon}{2\pi}\right)^{\sum_{l=1}^b \#J_l}.
\end{equation}
Put $\mathbf{x}=(x_{l,m})_{(l,m)\in Y}$. 
For $l\in\{1,\dots,b\}$, define $\mathcal{K}_{l,n}(\mathbf{x})$ to be the set $\Z/(m_n/c_{l,n})\Z$ if $J_l$ is empty and the set of $r\in \Z/(m_n/c_{l,n})\Z$ such that \[(\zeta_{m_n/c_{l,n}}^{r k_{m,l,n}})_{m\in J_l}\in \prod_{m\in J_l} [e^{\zeta_4(x_{l,m}- \varepsilon)} ; e^{\zeta_4(x_{l,m}+ \varepsilon)}]\] if $J_l$ is non-empty.
We have \[ \frac{\# \mathcal{K}_{l,n}(\mathbf{x})}{m_n/c_{l,n}}\geq (1-\varepsilon)\left(\frac{\varepsilon}{2\pi}\right)^{\# J_l} \] for all $n$ large enough. 
It is clear if $J_l$ is empty. 
If not, then by Lemma \ref{cor 3} $a)$, the sequence of terms $(\zeta_{m_n}^{k_{m,l,n}})_{m\in J_l}$ is strict. 
As $c_{l,n}$ is bounded from above by a constant depending only on $\Gamma$ and $\theta$, we deduce that the sequence of terms $(\zeta_{m_n/c_{l,n}}^{k_{m,l,n}})_{m\in J_l}$ is also strict. 
The desired inequality now arises from Corollary \ref{lmm 9} applied to this sequence, $M=\#J_l$ and $(x_1,\dots,x_M)=(x_{l,m})_{m\in J_l}$. 
 
Clearly, $\prod_{l=1}^b \mathcal{K}_{l,n}(\mathbf{x})$ is a subset of $H_n$ by \eqref{eq description H n}.
Moreover, $H_n$ has cardinality $\prod_{l=1}^b m_n/c_{l,n}$ and has index $[K_\Gamma(\mathcal{F}^{1/\theta}) : K_\Gamma]$ in $G_n$.  
Thus
\begin{align*} 
\frac{\# \prod_{l=1}^b \mathcal{K}_{l,n}(\mathbf{x})}{\#G_n} & = \frac{1}{[K_\Gamma(\mathcal{F}^{1/\theta}) : K_\Gamma]}\prod_{l=1}^b \frac{\# \mathcal{K}_{l,n}(\mathbf{x})}{m_n/c_{l,n}}\geq \frac{(1-\varepsilon)^b }{[K_\Gamma(\mathcal{F}^{1/\theta}) : K_\Gamma]} \left(\frac{\varepsilon}{2\pi}\right)^{\sum_{l=1}^b \#J_l}
\end{align*}
 for all $n$ large enough. 
Combining this inequality and \eqref{eq 9} provides an element $\sigma_n =\mathbf{r}=(r_1,\dots,r_b)\in F_n\cap \prod_{l=1}^b \mathcal{K}_{l,n}(\mathbf{x})$. 
This ends the first part. 

\underline{Second part}:  
Let $i,j\in I$ be as in the statement.
A small calculation gives 
\begin{align*}
\zeta_{m_n}^{\mathbf{rc}_n.(\mathbf{K}_{j,n}-\mathbf{K}_{i,n})} & = \zeta_{m_n}^{\sum_{l=1}^b r_lc_{l,n} (K_{j,l,n}-K_{i,l,n})} = \zeta_{m_n}^{\sum_{l=1}^b \sum_{m\in J_l}r_lc_{l,n}(\Lambda_m^{(j,l)} - \Lambda_m^{(i,l)})k_{m,l,n}} \\
& = \prod_{(l,m)\in Y} (\zeta_{m_n}^{ r_lc_{l,n}k_{m,l,n}})^{\Delta_m^{(i,j,l)}}, 
\end{align*}
where $\Delta_m^{(i,j,l)}=\Lambda_m^{(j,l)} - \Lambda_m^{(i,l)}\in \Z$. 
Let $(l,m)\in Y$. 
In particular, $J_l$ is non-empty. 
Recall that $r_l\in \mathcal{K}_{l,n}(\mathbf{x})$. 
By definition of $\mathcal{K}_{l,n}(\mathbf{x})$, we have \[ \zeta_{m_n}^{r_lc_{l,n}k_{m,l,n}}= \zeta_{m_n/c_{l,n}}^{r_lk_{m,l,n}}\in [e^{\zeta_4 (x_{l,m}-\varepsilon)}; e^{\zeta_4(x_{l,m} + \varepsilon)}]\] for all $m\in J_l$. 
Thus
\begin{multline*} 
\zeta_{m_n}^{\mathbf{rc}_n.(\mathbf{K}_{j,n}-\mathbf{K}_{i,n})} \in \\
\left[\prod_{(l,m)\in Y} e^{\zeta_4 \left(\Delta_m^{(i,j,l)}x_{l,m} -  \varepsilon \left\vert \Delta_m^{(i,j,l)}  \right\vert\right)} ;  \prod_{(l,m)\in Y} e^{\zeta_4\left(\Delta_m^{(i,j,l)} x_{l,m} + \varepsilon \left\vert \Delta_m^{(i,j,l)}\right\vert \right)} \right]. 
\end{multline*} 
We clearly have  \[ \sum_{(l,m)\in Y} \Delta_m^{(i,j,l)}x_{l,m} = \sum_{(l,m)\in Y} (\Lambda_m^{(j,l)} - \Lambda_m^{(i,l)})x_{l,m} = \boldsymbol{\Lambda}_{i,j}.\mathbf{x} \]
and we finally conclude from the definition of $\gamma$ that \[\zeta_{m_n}^{\mathbf{rc}_n.(\mathbf{K}_{j,n}-\mathbf{K}_{i,n})}\in [e^{\zeta_4 (\boldsymbol{\Lambda}_{i,j}.\mathbf{x}-(\gamma-1)\varepsilon)} ; e^{\zeta_4(\boldsymbol{\Lambda}_{i,j}.\mathbf{x}+(\gamma-1)\varepsilon)}].\]
On the other hand, $e^{\zeta_4(\overrightarrow{z_{i,n}}, \overrightarrow{z_{j,n}})}\in [e^{\zeta_4(L_{i,j}-\varepsilon)},e^{\zeta_4( L_{i,j}+\varepsilon)}]$ for all $n$ large enough since $(\overrightarrow{z_{i,n}}, \overrightarrow{z_{j,n}}) \to L_{i,j}$. 
From all this, we conclude that for all $n$ large enough, \[ e^{\zeta_4 B_{\mathbf{r}, i,j,n}} = e^{\zeta_4 (\overrightarrow{z_{i,n}}, \overrightarrow{z_{j,n}})} \zeta_{m_n}^{\mathbf{rc}_n.(\mathbf{K}_{j,n}-\mathbf{K}_{i,n})}\in I_{i,j}(\mathbf{x}).\]
By assumption, $\pm 1 \notin I_{i,j}(\mathbf{x})$.
The real part function is therefore monotone on $I_{i,j}(\mathbf{x})$. 
In particular, it reaches its extrema to the two endpoints of $I_{i,j}(\mathbf{x})$.
If it is decreasing, then \[ \cos(L_{i,j}+ \boldsymbol{\Lambda}_{i,j}.\mathbf{x}-\gamma\varepsilon) \geq \cos(B_{\mathbf{r},i,j,n}) \geq \cos(L_{i,j}+ \boldsymbol{\Lambda}_{i,j}.\mathbf{x}+\gamma\varepsilon). \]
We deduce the desired inequality thanks to the relations $\cos(x-y)=\cos(x)\cos(y)+\sin(x)\sin(y)$, $\cos(x+y) = \cos(x)\cos(y)- \sin(x)\sin(y)$ and $\vert \sin(x)\vert \leq 1$. 
The increasing case is similar, which proves the second part of the statement.
\end{proof}

The key to get Theorem \ref{thm 5} $a)$ is to apply the previous proposition to a finite number of well-chosen $\mathbf{x}$.
But before constructing them, we need some preliminary results. 

\begin{lmm} \label{lmm 22distinct}
Let $i,j\in I$ be distinct. 
Then $\boldsymbol{\Lambda}_{i,j}$ is non-zero. 
\end{lmm}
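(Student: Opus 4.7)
I would argue by contradiction, assuming $\boldsymbol{\Lambda}_{i,j} = 0$. Unwinding the definition
\[ \boldsymbol{\Lambda}_{i,j} = \bigl(\Lambda_m^{(j,l)} - \Lambda_m^{(i,l)}\bigr)_{(l,m) \in Y}, \]
this hypothesis amounts to $\Lambda_m^{(j,l)} = \Lambda_m^{(i,l)}$ for every couple $(l,m) \in Y$, i.e.\ for every $l \in \{1,\dots,b\}$ and every $m \in J_l$.

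Next I would substitute this identity into the defining formula $K_{j,l,n} = \sum_{m \in J_l} \Lambda_m^{(j,l)} k_{m,l,n}$ introduced in Lemma \ref{lmm 521}. For each $l \in \{1,\dots,b\}$ and each $n$, this yields
\[ K_{j,l,n} - K_{i,l,n} = \sum_{m \in J_l} \bigl(\Lambda_m^{(j,l)} - \Lambda_m^{(i,l)}\bigr)\, k_{m,l,n} = 0, \]
where the case $J_l = \emptyset$ is covered by the convention that an empty sum is $0$, so that $K_{j,l,n}$ and $K_{i,l,n}$ both vanish automatically. Consequently $\mathbf{K}_{j,n} = \mathbf{K}_{i,n}$ for every $n$ in the subsequence, which contradicts Lemma \ref{lmm 521} b): that item asserts that the tuples $\mathbf{K}_{j,n}$ are pairwise distinct as $j$ varies over $I$, while by hypothesis $i, j \in I$ are distinct. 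This contradiction forces $\boldsymbol{\Lambda}_{i,j} \neq 0$, as claimed.

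I do not foresee a genuine obstacle here: the statement is essentially a formal consequence of how $I$ was built in Section \ref{section 4}. The grouping step in the proof of Lemma \ref{lmm 521} a) was performed precisely so that distinct indices in $I$ give distinct tuples $\mathbf{K}_{j,n}$, and this is exactly what rules out the simultaneous vanishing of the coefficients $\Lambda_m^{(j,l)} - \Lambda_m^{(i,l)}$ over all $(l,m) \in Y$. The only point worth keeping in mind is the empty-sum convention, which allows one to treat those $l$ with $J_l = \emptyset$ uniformly with the rest of the argument.
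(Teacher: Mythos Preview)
Your argument is correct and follows essentially the same route as the paper: assume $\boldsymbol{\Lambda}_{i,j}=0$, deduce from the definition of $K_{j,l,n}$ in Lemma \ref{lmm 521} that $\mathbf{K}_{i,n}=\mathbf{K}_{j,n}$ for all $n$, and contradict Lemma \ref{lmm 521} $b)$. Your explicit mention of the empty-sum convention for those $l$ with $J_l=\emptyset$ is a useful clarification that the paper leaves implicit.
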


\begin{proof}
If $\boldsymbol{\Lambda}_{i,j}$ is zero, then $\Lambda_m^{(j,l)}=\Lambda_m^{(i,l)}$ for all $(l,m)\in Y$, i.e. for all $l\in\{1,\dots,b\}$ and all $m\in J_l$. 
But then, Lemma \ref{lmm 521} implies $K_{i,l,n}=K_{j,l,n}$ for all $l\in\{1,\dots,b\}$ and all $n$, i.e. $(K_{i,1,n},\dots,K_{i,b,n})=(K_{j,1,n},\dots, K_{j,b,n})$ for all $n$. 
This is possible only if $i=j$ according to Lemma \ref{lmm 521} $b)$, a contradiction.
The lemma follows. 
\end{proof}

We can thus fix an integer $d\in \N$ such that for all $i,j\in I$ distinct, $d$ does not divide at least one of the coordinates of $\boldsymbol{\Lambda}_{i,j}$.  
Put \[Z= \{ 0; 2\pi /d;\dots; 2\pi (d-1)/d \}^{\#Y}.\]
\begin{lmm} \label{lmm 10}
Let $i,j\in I$ be distinct.
Then $\sum_{\mathbf{z}\in Z} e^{\zeta_4 \boldsymbol{\Lambda}_{i,j}.\mathbf{z}}=0$.  
\end{lmm}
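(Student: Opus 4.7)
My plan is to recognise the sum as a product of $\#Y$ geometric sums in roots of unity and invoke the choice of $d$ to kill at least one factor.

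Concretely, writing $\boldsymbol{\Lambda}_{i,j} = (\lambda_{l,m})_{(l,m)\in Y} \in \Z^{\#Y}$ and using $\zeta_4 = e^{2\pi\zeta_4/4} = i$, the map $\mathbf{z} \mapsto e^{\zeta_4 \boldsymbol{\Lambda}_{i,j}\cdot \mathbf{z}}$ factors as a product of functions in one coordinate each: for $\mathbf{z}=(z_{l,m})_{(l,m)\in Y}$,
\[
e^{\zeta_4 \boldsymbol{\Lambda}_{i,j}\cdot \mathbf{z}} = \prod_{(l,m)\in Y} e^{\zeta_4 \lambda_{l,m} z_{l,m}}.
\]
Since $Z$ is a Cartesian product, Fubini gives
\[
\sum_{\mathbf{z}\in Z} e^{\zeta_4 \boldsymbol{\Lambda}_{i,j}\cdot \mathbf{z}} = \prod_{(l,m)\in Y} \sum_{k=0}^{d-1} e^{\zeta_4 \lambda_{l,m} \cdot 2\pi k/d} = \prod_{(l,m)\in Y} \sum_{k=0}^{d-1} \zeta_d^{\lambda_{l,m} k}.
\]

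For each inner geometric sum, standard orthogonality of characters of $\Z/d\Z$ tells us it equals $d$ when $d \mid \lambda_{l,m}$ and $0$ otherwise. By the defining property of $d$ (chosen so that for each pair of distinct $i,j \in I$, at least one coordinate of $\boldsymbol{\Lambda}_{i,j}$ is not divisible by $d$, which is possible because $\boldsymbol{\Lambda}_{i,j}\neq 0$ by Lemma \ref{lmm 22distinct}), at least one factor in the product vanishes, hence the whole product is $0$. There is no real obstacle here; the only thing to be careful about is to correctly identify $e^{\zeta_4 \cdot 2\pi k/d}$ with $\zeta_d^{k}$ and to cite the non-vanishing of $\boldsymbol{\Lambda}_{i,j}$ to ensure that the choice of $d$ is effective for this particular pair.
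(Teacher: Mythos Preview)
Your proof is correct and follows essentially the same approach as the paper: both expand the sum over $Z$ using its product structure and reduce to the vanishing of a geometric sum $\sum_{k=0}^{d-1}\zeta_d^{\lambda k}$ for some coordinate $\lambda$ of $\boldsymbol{\Lambda}_{i,j}$ not divisible by $d$. The only cosmetic difference is that you fully factor the sum into a product over all coordinates of $Y$, whereas the paper singles out one coordinate $u$ with $d\nmid\lambda_u$ and factors only that sum; also, your parenthetical ``$\zeta_4=e^{2\pi\zeta_4/4}$'' is a circular slip (you mean $\zeta_4=e^{2\pi i/4}=i$), but this does not affect the argument.
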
 
\begin{proof}
For brevity, write $\boldsymbol{\Lambda}_{i,j}=(\lambda_1,\dots, \lambda_{\#Y})$. 
Thus \[\sum_{\mathbf{z}\in Z} e^{\zeta_4 \boldsymbol{\Lambda}_{i,j}.\mathbf{z}} = \sum_{0\leq k_1,\dots, k_{\#Y}\leq d-1} e^{\zeta_4 \sum_{t=1}^{\#Y} \lambda_t 2\pi k_t/d} = \sum_{0\leq k_1,\dots, k_{\#Y}\leq d-1} \zeta_d^{\sum_{t=1}^{\#Y} \lambda_t k_t}. \] 
By definition of $d$, there is $u\in\{1,\dots,\# Y\}$ such that $d$ does not divide $\lambda_u$. 
So \[\sum_{\mathbf{z}\in Z} e^{\zeta_4 \boldsymbol{\Lambda}_{i,j}.\mathbf{z}} = \sum_{0\leq k_1,\dots, k_{u-1}, k_{u+1}, \dots, k_{\#Y}\leq d-1} \zeta_d^{\sum_{\underset{t\neq u}{t=1}}^{\#Y} \lambda_t k_t} \sum_{k_u=0}^{d-1} \zeta_d^{\lambda_u k_u}. \] 
Finally, $\sum_{k=0}^{d-1} \zeta_d^{\lambda_u k}=0$ since $d$ does not divide $\lambda_u$. 
The lemma follows.
\end{proof}
We would like to apply Proposition \ref{prop 3} for all $\mathbf{x}\in Z$.
However, there is no guarantee that the condition $\pm 1\notin I_{i,j}(\mathbf{x})$ holds for all $i,j\in I$ distinct and all $\mathbf{x}\in Z$.
We see below how to circumvent this difficulty. 

We define $\mathcal{H}$ to be the set of $\mathbf{X} \in [0;2\pi]^{\# Y}$ satisfying \[ \exists t\in\Z, \exists i\neq j\in I,  \exists \mathbf{z} \in Z \; \text{such that} \; L_{i,j}+ \boldsymbol{\Lambda}_{i,j}. (\mathbf{X}+\mathbf{z}) - t\pi =0. \]
As $\boldsymbol{\Lambda}_{i,j}$ is non-zero for all $i,j\in I$ distinct, we infer that $\mathcal{H}$ lies in a finite union of hyperplanes in $\R^{\#Y}$ (the equation above having no solution if $\vert t \vert$ is large enough since the quantity $\mathbf{X}$ is bounded).
Hence there exists a simply-connected compact $K \subset [0;2\pi]^{\#Y}$ such that $K \cap \mathcal{H}$ is empty.
The distance $\delta$ from $K$ to $\mathcal{H}$ is a positive real since both $K$ and $\mathcal{H}$ are compact.
For $\mathbf{x}_0\in K$, the distance from $\mathbf{x}_0$ to $\mathcal{H}$ is \[\min_{t\in \Z, i\neq j\in I, \mathbf{z}\in Z}\left\{ \frac{\vert L_{i,j}+ \boldsymbol{\Lambda}_{i,j}.(\mathbf{x}_0+\mathbf{z}) - t\pi \vert}{\sqrt{\boldsymbol{\Lambda}_{i,j}.\boldsymbol{\Lambda}_{i,j}}}\right\}\geq \delta. \] 
As $\boldsymbol{\Lambda}_{i,j}\in \Z^{\#Y}$ is non-zero, we conclude $\sqrt{\boldsymbol{\Lambda}_{i,j}.\boldsymbol{\Lambda}_{i,j}}\geq 1$, and so \[\vert L_{i,j}+ \boldsymbol{\Lambda}_{i,j}.\mathbf{x} - t\pi \vert \geq \delta\] for all $t\in \Z$, all $i,j\in I$ distinct and all $\mathbf{x}\in K+Z= \{ \mathbf{X}+\mathbf{z}, \mathbf{X}\in K, \mathbf{z}\in Z\}$.

Recall that $\varepsilon$ is as small as possible. 
So we can take it such that $\gamma \varepsilon < \delta$.
Thus \[ t\pi \notin \left[L_{i,j}+ \boldsymbol{\Lambda}_{i,j}.\mathbf{x} -\gamma\varepsilon ;  L_{i,j}+\boldsymbol{\Lambda}_{i,j}.\mathbf{x} +\gamma\varepsilon \right]\] for all $t\in\Z$, all $i,j\in I$ distinct and all $\mathbf{x}\in K+Z$.  
In conclusion, $\pm 1\notin I_{i,j}(\mathbf{x})$ for all $i,j\in I$ distinct and all $\mathbf{x}\in K+Z$.  
 
Here is the last calculation before starting the proof of Theorem \ref{thm 5} $a)$. 

\begin{lmm} \label{lmm 11}
Let $\eta, x_j\in \C$ be complex numbers with $j\in I$. 
Then \[ \sum_{j\in I} x_j^2 + \eta \sum_{i,j\in I, i\neq j} x_ix_j = -\frac{\eta}{2}\sum_{i,j\in I, i\neq j} (x_i-x_j)^2 + (1+\eta(\#I-1))\sum_{j\in I} x_j^2.   \]
\end{lmm}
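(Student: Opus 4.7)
\textit{Proof plan.} This is a purely algebraic identity; there is no hidden difficulty, only bookkeeping. The plan is to expand the RHS and collect terms.

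First I would expand $(x_i-x_j)^2 = x_i^2 - 2 x_i x_j + x_j^2$ inside the sum $\sum_{i\neq j}(x_i-x_j)^2$. The cross terms immediately give $-2\sum_{i\neq j} x_i x_j$, so the only step to check carefully is the coefficient of the diagonal squares. For a fixed index $k\in I$, the monomial $x_k^2$ appears as $x_i^2$ in the pairs $(k,j)$ with $j\neq k$ (that is $\#I-1$ times) and as $x_j^2$ in the pairs $(i,k)$ with $i\neq k$ (again $\#I-1$ times). Hence
\[
\sum_{i\neq j}(x_i^2+x_j^2) = 2(\#I-1)\sum_{j\in I} x_j^2,
\]
and consequently
\[
\sum_{i,j\in I,\, i\neq j}(x_i-x_j)^2 = 2(\#I-1)\sum_{j\in I} x_j^2 - 2\sum_{i,j\in I,\, i\neq j} x_i x_j.
\]

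Multiplying this relation by $-\eta/2$ and adding the term $(1+\eta(\#I-1))\sum_{j\in I} x_j^2$ yields
\[
-\tfrac{\eta}{2}\sum_{i\neq j}(x_i-x_j)^2 + (1+\eta(\#I-1))\sum_{j\in I}x_j^2 = \sum_{j\in I} x_j^2 + \eta\sum_{i\neq j} x_i x_j,
\]
where the two contributions $\pm\eta(\#I-1)\sum_j x_j^2$ cancel. This is exactly the LHS, so the identity holds.

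The main (and only) obstacle is the combinatorial counting of how often each $x_k^2$ appears among the diagonal terms of $\sum_{i\neq j}(x_i^2+x_j^2)$; once this factor of $2(\#I-1)$ is in hand the rest is a one-line cancellation. Note that the identity is formal, so it is valid for any commutative ring, in particular for $\C$, and it will be applied later with $x_j = |z_{j,n}|$ and $\eta = \cos(\gamma\varepsilon)$ (or similar quantities).
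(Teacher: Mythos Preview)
Your proof is correct and essentially identical to the paper's: both expand the right-hand side using $(x_i-x_j)^2=x_i^2+x_j^2-2x_ix_j$, count the factor $2(\#I-1)$ in front of $\sum_j x_j^2$, and observe the cancellation. One minor aside: in the paper the lemma is actually applied with $\eta=\pm\sin(\gamma\varepsilon)$, not $\cos(\gamma\varepsilon)$.
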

\begin{proof}
For brevity, put $\eta_1=-\eta/2$ and $\eta_2=1+\eta(\#I-1)$. 
Then 
\begin{align*}
\eta_1\sum_{i,j\in I, i\neq j} (x_i-x_j)^2 + \eta_2\sum_{j\in I} x_j^2 & = \eta_1\sum_{i,j\in I, i\neq j} (x_i^2+x_j^2-2x_ix_j) +\eta_2\sum_{j\in I} x_j^2 \\ 
& = 2(\#I-1)\eta_1\sum_{i\in I} x_i^2 -2\eta_1\sum_{i,j\in I, i\neq j} x_ix_j +\eta_2 \sum_{j\in I} x_j^2 \\ 
& = (\eta_2+2(\#I-1)\eta_1) \sum_{j\in I} x_j^2 -2\eta_1\sum_{i,j\in I, i\neq j} x_ix_j
\end{align*}
and the lemma follows since $-2\eta_1=\eta$ and $\eta_2+2(\#I-1)\eta_1=1$. 
\end{proof}

\textit{Proof of Theorem \ref{thm 5} $a)$.}  
Let $\mathbf{y}\in K$. 
Recall that $\pm 1 \notin I_{i,j}(\mathbf{x})$ for all $i,j\in I$ distinct and all $\mathbf{x}\in \mathbf{y}+Z$. 
The set $\mathbf{y}+Z$ being finite, we infer that for all $n$ large enough, Proposition \ref{prop 3} holds for all elements $\mathbf{x}\in \mathbf{y}+Z$. 
Choose such a $n$. 

Lemma \ref{lmm 10} easily implies $\sum_{\mathbf{x}\in \mathbf{y}+Z} \cos(L_{i,j}+ \mathbf{\Lambda}_{i.j}.\mathbf{x})=0$ for all $i,j\in I$ distinct. 
Summing over $\mathbf{y}+Z$ the chain of inequalities in Proposition \ref{prop 3} leads to \[ \sin(\gamma\varepsilon) \#Z \geq \sum_{\mathbf{x}\in \mathbf{y}+Z} \cos(B_{\mathbf{r}_{\mathbf{x},  n},i,j,n}) \geq -\sin(\gamma\varepsilon)\#Z. \]
Proposition \ref{prop 3} gives $1+\varepsilon \geq \vert \sigma_{\mathbf{x},  n} x_n\vert^2 \geq 1-\varepsilon$ for all $\mathbf{x}\in \mathbf{y}+ Z$. 
By Lemma \ref{lmm 12},  \[ 1+\varepsilon \geq \sum_{j\in I} \vert z_{j,n}\vert^2 + \sum_{i,j\in I, i\neq j} \vert z_{i,n}\vert \vert z_{j,n}\vert \cos(B_{\mathbf{r}_{\mathbf{x},  n},i,j,n}) \geq 1-\varepsilon. \]
By summing these inequalities over $\mathbf{y}+Z$, we conclude \[ (1+\varepsilon)\#Z \geq \#Z\sum_{j\in I} \vert z_{j,n}\vert^2 -\sin(\gamma\varepsilon)\#Z \sum_{i,j\in I, i\neq j} \vert z_{i,n}\vert \vert z_{j,n}\vert \] and \[ \#Z\sum_{j\in I} \vert z_{j,n}\vert^2 +\sin(\gamma\varepsilon)\#Z \sum_{i,j\in I, i\neq j} \vert z_{i,n}\vert \vert z_{j,n}\vert \geq (1-\varepsilon)\#Z. \]
Finally, Lemma \ref{lmm 11} applied to $x_j=\vert z_{j,n}\vert$ and $\eta=\pm \sin(\gamma\varepsilon)$ gives us
\begin{align*}
1+\varepsilon & \geq \frac{\sin(\gamma\varepsilon)}{2}\sum_{i,j\in I, i\neq j}(\vert z_{i,n}\vert - \vert z_{j,n} \vert)^2 + \left(1-(\#I-1)\sin(\gamma\varepsilon)\right)\sum_{j\in I} \vert z_{j,n}\vert^2 \\
&\geq  \left(1-(\#I-1)\sin(\gamma\varepsilon)\right)\sum_{j\in I} \vert z_{j,n}\vert^2
\end{align*}
and
\begin{align*}
1-\varepsilon & \leq -\frac{\sin(\gamma\varepsilon)}{2}\sum_{i,j\in I, i\neq j}(\vert z_{i,n}\vert - \vert z_{j,n} \vert)^2 + \left(1+(\#I-1)\sin(\gamma\varepsilon)\right)\sum_{j\in I} \vert z_{j,n}\vert^2 \\
& \leq \left(1+(\#I-1)\sin(\gamma\varepsilon)\right)\sum_{j\in I} \vert z_{j,n}\vert^2
\end{align*}
In conclusion, for all $n$ large enough, we have \[ \frac{1-\varepsilon}{1+(\#I-1)\sin(\gamma\varepsilon)} \leq \sum_{j\in I} \vert z_{j,n}\vert^2 \leq \frac{1+\varepsilon}{1-(\#I-1)\sin(\gamma\varepsilon)},  \] i.e. $\sum_{j\in I} \vert z_{j,n}\vert^2 \to 1$, which proves Theorem \ref{thm 5} $a)$.
\qed
 
\subsection{Proof of Theorem \ref{thm 5} $b)$}
Recall that $\pm 1 \notin I_{i,j}(\mathbf{x})$ for all $i,j\in I$ distinct and all $\mathbf{x}\in K\subset K+Z$. 
 We can now show the pointwise limit below.
\begin{lmm} \label{lmm 13}
Pick $\mathbf{x}\in K$. 
Then $\sum_{i,j\in I, i\neq j} \vert z_{i,n} \vert \vert z_{j,n} \vert \cos\left( L_{i,j} + \boldsymbol{\Lambda}_{i,j}.\mathbf{x} \right) \to 0$.
\end{lmm}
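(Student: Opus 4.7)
The plan is to apply Proposition \ref{prop 3} directly at $\mathbf{x}\in K$—not at shifts $\mathbf{x}+\mathbf{z}$ for $\mathbf{z}\in Z$ as in the proof of part $a)$—and then bootstrap with the conclusion of part $a)$ already established. First I would note that $\mathbf{0}\in Z$, so $K\subset K+Z$, and the recollection immediately above the lemma gives the required hypothesis $\pm 1\notin I_{i,j}(\mathbf{x})$ for every distinct $i,j\in I$ verbatim at $\mathbf{x}$.

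Fix $\varepsilon>0$ small enough that $\gamma\varepsilon<\delta$. Proposition \ref{prop 3} then yields, for every sufficiently large $n$, an element $\sigma_{\mathbf{x},n}=\mathbf{r}_{\mathbf{x},n}\in H_n$ with $|\sigma_{\mathbf{x},n}x_n|^2\in[1-\varepsilon,1+\varepsilon]$, together with the two-sided bound
\[
\bigl|\cos(B_{\mathbf{r}_{\mathbf{x},n},i,j,n})-\cos(L_{i,j}+\boldsymbol{\Lambda}_{i,j}.\mathbf{x})\bigr|\leq(1-\cos(\gamma\varepsilon))+\sin(\gamma\varepsilon)
\]
for all distinct $i,j\in I$. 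Combining Lemma \ref{lmm 12} with part $a)$ of Theorem \ref{thm 5}, which asserts $\sum_{j\in I}|z_{j,n}|^2\to 1$, I would deduce
\[
\sum_{i\neq j\in I}|z_{i,n}||z_{j,n}|\cos(B_{\mathbf{r}_{\mathbf{x},n},i,j,n})=|\sigma_{\mathbf{x},n}x_n|^2-\sum_{j\in I}|z_{j,n}|^2,
\]
whose absolute value is therefore at most $\varepsilon+o(1)$ as $n\to\infty$.

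Finally I would swap each $\cos(B_{\mathbf{r}_{\mathbf{x},n},i,j,n})$ for $\cos(L_{i,j}+\boldsymbol{\Lambda}_{i,j}.\mathbf{x})$ using the displayed estimate. The cost is bounded by
\[
\bigl((1-\cos(\gamma\varepsilon))+\sin(\gamma\varepsilon)\bigr)\sum_{i\neq j}|z_{i,n}||z_{j,n}|,
\]
and the factor $\sum_{i\neq j}|z_{i,n}||z_{j,n}|\leq(\#I-1)\sum_{j}|z_{j,n}|^2$ stays bounded by part $a)$. Hence
\[
\limsup_{n\to\infty}\Bigl|\sum_{i\neq j\in I}|z_{i,n}||z_{j,n}|\cos(L_{i,j}+\boldsymbol{\Lambda}_{i,j}.\mathbf{x})\Bigr|\leq \varepsilon+C\varepsilon
\]
for some constant $C$ independent of both $n$ and $\varepsilon$. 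Letting $\varepsilon\to 0$, which is permissible because $\gamma,\delta,\#I$ do not depend on $\varepsilon$, gives the lemma.

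The only delicate point is ensuring that the substitution error in the last step is genuinely $O(\varepsilon)$ uniformly in $n$; this reduces to the boundedness of the cross products $|z_{i,n}||z_{j,n}|$, which follows from part $a)$ via $|z_{i,n}||z_{j,n}|\leq(|z_{i,n}|^2+|z_{j,n}|^2)/2$. Everything else is a direct substitution into the quantitative information already produced by Proposition \ref{prop 3} and part $a)$ of the theorem.
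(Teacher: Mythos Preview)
Your proof is correct and follows essentially the same approach as the paper's: both apply Proposition \ref{prop 3} at the single point $\mathbf{x}\in K$, combine Lemma \ref{lmm 12} with part $a)$ to bound $\left|\sum_{i\neq j}|z_{i,n}||z_{j,n}|\cos(B_{\mathbf{r}_{\mathbf{x},n},i,j,n})\right|$ by $O(\varepsilon)$, swap the cosines using the estimate from Proposition \ref{prop 3}, and control the resulting error via the boundedness of $\sum_j|z_{j,n}|^2$. The only cosmetic difference is that the paper keeps the factor $\cos(\gamma\varepsilon)$ on the target sum and divides by it at the end, whereas you absorb $(1-\cos(\gamma\varepsilon))$ into the error term from the start.
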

\begin{proof}
Thanks to Theorem \ref{thm 5} $a)$, we have $1-\varepsilon \leq \sum_{j\in I} \vert z_{j,n} \vert^2 \leq 1+\varepsilon$ for all $n$ large enough. 
Furthermore, for all $n$ large enough, there is $\sigma_{\mathbf{x},n}\in H_n$ as in Proposition \ref{prop 3}. 
Choose $n$ large enough so that the facts above hold. 

Using the triangle inequality, then Proposition \ref{prop 3}, we get 
\begin{multline} \label{eq 10}
\left\vert \sum_{i,j\in I, i\neq j} \vert z_{i,n} \vert \vert z_{j,n } \vert \left(\cos (B_{\mathbf{r_{\mathbf{x},n}},i,j,n})  -\cos(\gamma\varepsilon) \cos\left(L_{i,j}+\boldsymbol{\Lambda}_{i,j}.\mathbf{x}\right)\right) \right\vert \leq \\
\sum_{i,j\in I, i\neq j} \vert z_{i,n} \vert \vert z_{j,n}\vert \sin(\gamma\epsilon)\leq \gamma\varepsilon \sum_{i,j\in I, i\neq j} \vert z_{i,n} \vert \vert z_{j,n } \vert. 
\end{multline}
We also have $1-\varepsilon \leq \vert \sigma_{\mathbf{x},n} x_n\vert^2 \leq 1+\varepsilon$ by Proposition \ref{prop 3}.
Recall that we have the chain of inequalities $1-\varepsilon \leq \sum_{j\in I} \vert z_{j,n} \vert^2 \leq 1+\varepsilon$.
Thanks to Lemma \ref{lmm 12}, we obtain
\begin{equation} \label{eq n imp pour pas 2 fois le meme}
 \left\vert \sum_{i,j\in I, i\neq j} \vert z_{i,n} \vert \vert z_{j,n } \vert \cos (B_{\mathbf{r}_{\mathbf{x},n},i,j,n}) \right\vert \leq 2\varepsilon. 
\end{equation}
Applying the reverse triangle inequality to \eqref{eq 10}, it follows from \eqref{eq n imp pour pas 2 fois le meme} that \[\cos(\gamma\varepsilon) \left\vert \sum_{i,j\in I, i\neq j} \vert z_{i,n} \vert \vert z_{j,n } \vert \cos\left(L_{i,j}+\boldsymbol{\Lambda}_{i,j}.\mathbf{x}\right)  \right\vert \leq \varepsilon\left(\gamma \sum_{i,j\in I, i\neq j}  \vert z_{i,n} \vert \vert z_{j,n } \vert  +2 \right). \]
As $\vert z_{j,n} \vert^2\leq 1+\varepsilon$ for all $j\in I$, we finally conclude that for all $n$ large enough, \[ \left\vert \sum_{i,j\in I, i\neq j} \vert z_{i,n} \vert \vert z_{j,n } \vert \cos\left(L_{i,j}+\boldsymbol{\Lambda}_{i,j}.\mathbf{x}\right)  \right\vert \leq \frac{\varepsilon}{\cos(\gamma\varepsilon)} (2+\gamma(1+\varepsilon) (\# I)^2) ,\] which ends the proof of the lemma.  
\end{proof}
Let $\{\mathbf{c}_1,\dots, \mathbf{c}_M\}=\{\boldsymbol{\Lambda}_{i,j}, i, j\in I, i\neq j\}$ be with $\mathbf{c}_1,\dots,\mathbf{c}_M$ pairwise distinct. 
For $k\in \{1,\dots, M\}$, put $E_k$ the set of $i,j\in I$ distinct such that $\mathbf{c}_k=\boldsymbol{\Lambda}_{i,j}$. 
Clearly $E_1,\dots, E_k$ is a partition of $I^2\backslash \bigsqcup_{l\in I} (l,l)$. 

We now state a much more precise result than Lemma \ref{lmm 13}.  
\begin{lmm}  \label{lmm 14}
We have $\sum_{(i,j)\in E_k} \vert z_{i,n}\vert \vert z_{j,n}\vert  e^{\zeta_4 L_{i,j} } \to 0$ for all $k\in\{1,\dots,M\}$.
\end{lmm}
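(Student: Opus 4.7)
The plan is to encode the desired conclusion as the vanishing of coefficients in a trigonometric polynomial obtained from Lemma \ref{lmm 13}. Introduce the complex quantities
\[
C_{k,n}:=\sum_{(i,j)\in E_k}|z_{i,n}||z_{j,n}|\,e^{\zeta_4 L_{i,j}},
\]
so that the goal is exactly $C_{k,n}\to 0$ for every $k\in\{1,\dots,M\}$. Starting from $2\cos\theta=e^{\zeta_4\theta}+e^{-\zeta_4\theta}$ and grouping the terms according to the partition $E_1,\dots,E_M$, the sum of cosines from Lemma \ref{lmm 13} rewrites as
\[
2\sum_{i\neq j}|z_{i,n}||z_{j,n}|\cos(L_{i,j}+\boldsymbol{\Lambda}_{i,j}.\mathbf{x})=\sum_{k=1}^M\bigl(C_{k,n}\,e^{\zeta_4 \mathbf{c}_k.\mathbf{x}}+\overline{C_{k,n}}\,e^{-\zeta_4 \mathbf{c}_k.\mathbf{x}}\bigr),
\]
whose limit is $0$ for every $\mathbf{x}\in K$ by Lemma \ref{lmm 13}.

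By Theorem \ref{thm 5} $a)$ the quantities $|z_{j,n}|$ are uniformly bounded, hence so is each $|C_{k,n}|$. I would extract a subsequence along which $C_{k,n}\to C_k$ for every $k$, and pass to the limit: the trigonometric polynomial
\[
P(\mathbf{x}):=\sum_{k=1}^M \bigl(C_k\,e^{\zeta_4 \mathbf{c}_k.\mathbf{x}}+\overline{C_k}\,e^{-\zeta_4\mathbf{c}_k.\mathbf{x}}\bigr)
\]
vanishes on $K$. Since $\mathcal{H}$ lies in a finite union of hyperplanes of $\R^{\#Y}$, one may arrange $K$ to have non-empty interior; as $P$ is real-analytic in $\mathbf{x}$, it then vanishes identically on $\R^{\#Y}$, and the linear independence of the distinct characters $\mathbf{x}\mapsto e^{\zeta_4\boldsymbol{\gamma}.\mathbf{x}}$ with $\boldsymbol{\gamma}\in\Z^{\#Y}$ forces every coefficient of $P$ to be zero.

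It remains to extract $C_k=0$ from this coefficient vanishing, which I expect to be the main obstacle. If $-\mathbf{c}_k\notin\{\mathbf{c}_1,\dots,\mathbf{c}_M\}$, the coefficient of $e^{\zeta_4 \mathbf{c}_k.\mathbf{x}}$ in $P$ is simply $C_k$, so $C_k=0$. The delicate case is when $\mathbf{c}_{k'}=-\mathbf{c}_k$ for some $k'$: the coefficient of $e^{\zeta_4\mathbf{c}_k.\mathbf{x}}$ is then $C_k+\overline{C_{k'}}$, which a priori only yields a linear relation between $C_k$ and $C_{k'}$. The key observation is the symmetry $\overline{C_{k',n}}=C_{k,n}$, which follows from the bijection $(i,j)\leftrightarrow(j,i)$ between $E_k$ and $E_{k'}$ (granted by $\boldsymbol{\Lambda}_{j,i}=-\boldsymbol{\Lambda}_{i,j}$) combined with the oriented-angle identity $L_{j,i}\equiv -L_{i,j}\pmod{2\pi}$. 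With this symmetry, $C_k+\overline{C_{k'}}=2C_k$, so $C_k=0$ in this case too. Since every convergent subsequence of the bounded sequence $(C_{k,n})_n$ thus has limit $0$, one concludes $C_{k,n}\to 0$, as desired.
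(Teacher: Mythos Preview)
Your argument is correct and follows essentially the same route as the paper: extract a convergent subsequence using the boundedness from Theorem~\ref{thm 5}~$a)$, pass to the limit in Lemma~\ref{lmm 13} to obtain a trigonometric polynomial vanishing on $K$, extend by analyticity, and read off that all coefficients vanish. Your complex-exponential packaging and your explicit treatment of the pairing $\mathbf{c}_{k'}=-\mathbf{c}_k$ via the symmetry $\overline{C_{k',n}}=C_{k,n}$ is a clean variant of the paper's real even/odd splitting into $\sum_k C_k\cos(\mathbf{c}_k.\mathbf{x})=\sum_k S_k\sin(\mathbf{c}_k.\mathbf{x})=0$, and in fact makes the final coefficient-extraction step more transparent than the paper's terse appeal to pairwise distinctness of the $\mathbf{c}_k$.
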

\begin{proof}
Write \[C_{k,n}=\sum_{(i,j)\in E_k} \vert z_{i,n}\vert \vert z_{j,n}\vert\cos(L_{i,j}) \; \; \; \; \; \text{and} \; \; \; \; \; S_{k,n}=-\sum_{(i,j)\in E_k} \vert z_{i,n}\vert \vert z_{j,n}\vert\sin(L_{i,j}).\] 
 The sequence of terms $(C_{k,n}, S_{k,n})_{k=1}^M$ has an accumulation point in $\R^{2M}$ since it is bounded by Theorem \ref{thm 5} $a)$.
Let $(C_k,S_k)_{k=1}^M$ be such a point.
To show our lemma, it is sufficient to get $C_k=S_k=0$ for all $k\in\{1,\dots,M\}$.
Without loss of generality, assume that $(C_{k,n}, S_{k,n})_{k=1}^M\to (C_k, S_k)_{k=1}^M$.

Let $\mathbf{x}\in K$. 
A short calculation gives
\begin{align*}
 \sum_{i,j\in I, i\neq j} \vert z_{i,n}\vert \vert z_{j,n}\vert \cos\left(L_{i,j}+ \boldsymbol{\Lambda}_{i,j}.\mathbf{x}\right) & =
\sum_{k=1}^M \sum_{(i,j)\in E_k}  \vert z_{i,n}\vert \vert z_{j,n}\vert \cos\left(L_{i,j}+ \mathbf{c}_k.\mathbf{x} \right) \\ 
& = \sum_{k=1}^M C_{k,n}\cos\left( \mathbf{c}_k.\mathbf{x}\right)  + S_{k,n}\sin\left(\mathbf{c}_k.\mathbf{x} \right).
\end{align*}
We deduce from Lemma \ref{lmm 13} that $\sum_{k=1}^M C_{k,n}\cos\left( \mathbf{c}_k.\mathbf{x}\right)  + S_{k,n}\sin\left(\mathbf{c}_k.\mathbf{x} \right) \to 0$
and the uniqueness of the limit gives \[ \sum_{k=1}^M C_k \cos\left( \mathbf{c}_k.\mathbf{x} \right) + S_k \sin\left(\mathbf{c}_k.\mathbf{x} \right) =0.\]
As $K$ is a simply-connected compact, the Monodromy Theorem claims that this equality holds for all $\mathbf{x}\in \C^{\#Y}$. 
Thus, for such a $\mathbf{x}$, we have \[ \sum_{k=1}^M C_k\cos\left( \mathbf{c}_k.\mathbf{x}\right)= \sum_{k=1}^M S_k\sin\left( \mathbf{c}_k.\mathbf{x}\right)=0\] since the functions $\mathbf{x}\mapsto \sum_{k=1}^M C_k\cos\left( \mathbf{c}_k.\mathbf{x}\right)$ and $\mathbf{x}\mapsto \sum_{k=1}^M S_k\sin\left( \mathbf{c}_k.\mathbf{x}\right)$ are both even and odd. 
The tuples $\mathbf{c}_1, \dots, \mathbf{c}_M$ being pairwise distinct by construction, we get $C_k=S_k=0$ for all $k\in\{1,\dots,M\}$. 
The lemma follows.
\end{proof}
The order to compare the elements in $\R^{\# Y}$ is the lexicographical order.
For all $j\in I$, put $\boldsymbol{\Lambda}_j=(\Lambda_m^{(j,l)})_{(l,m)\in Y}$ and note that $\boldsymbol{\Lambda}_{i,j}=\boldsymbol{\Lambda}_j-\boldsymbol{\Lambda}_i$. 
Thanks to Lemma \ref{lmm 22distinct}, it follows that $\boldsymbol{\Lambda}_j$ are pairwise distinct when $j$ runs over all elements of $I$. 

\textit{Proof of Theorem \ref{thm 5} $b)$}: 
Write $E$ for the set of elements $j\in I$ such that the sequence $(z_{j,n})$ does not go to $0$. 
Theorem \ref{thm 5} $a)$ claims that $E$ is non-empty.
To get $b)$, it suffices to prove that $\#E=1$. 
Assume by contradiction that $\# E >1$. 
Let $i_0, j_0\in E$ be distinct such that $\boldsymbol{\Lambda}_{j_0}=\max_{h\in E} \boldsymbol{\Lambda}_h$ and $\boldsymbol{\Lambda}_{i_0}=\min_{h\in E} \boldsymbol{\Lambda}_h$.

Let $k\in\{1,\dots, M\}$ be the unique integer such that $(i_0,j_0)\in E_k$.
Lemma \ref{lmm 14} gives $\sum_{(i,j)\in E_k} \vert z_{i,n}\vert \vert z_{j,n} \vert e^{\zeta_4 L_{i,j}} \to 0$.
As the sequence of terms $(z_{j,n})_{j\in I}$ is bounded by Theorem \ref{thm 5} $a)$, we get $\vert z_{i,n}\vert \vert z_{j,n}\vert \to 0$ if either $i\notin E$ or $j\notin E$.
From the equality \[E_k = (E_k \cap E^2) \sqcup \{(i,j)\in E_k, i\notin E \; \text{or} \; j\notin E\},\] we infer that $\sum_{(i,j)\in E_k \cap E^2} \vert z_{i,n}\vert \vert z_{j,n} \vert e^{\zeta_4 L_{i,j}} \to 0$.

Let $(i,j)\in E_k \cap E^2$. 
As $(i_0, j_0)\in E_k \cap E^2$, we have $\boldsymbol{\Lambda}_{i_0,j_0}= \mathbf{c}_k = \boldsymbol{\Lambda}_{i,j}$; whence \[\boldsymbol{\Lambda}_{j}- \boldsymbol{\Lambda}_{i}=\boldsymbol{\Lambda}_{i,j}= \boldsymbol{\Lambda}_{i_0,j_0} = \boldsymbol{\Lambda}_{j_0} - \boldsymbol{\Lambda}_{i_0}.\]
The maximality of $\boldsymbol{\Lambda}_{j_0}$, together with the minimality of $\boldsymbol{\Lambda}_{i_0}$, shows that $\boldsymbol{\Lambda}_j= \boldsymbol{\Lambda}_{j_0}$ and $\boldsymbol{\Lambda}_i= \boldsymbol{\Lambda}_{i_0}$. 
Since $\boldsymbol{\Lambda}_j$ are pairwise distinct when $j$ ranges over all elements of $I$, we deduce that $(i,j)=(i_0,j_0)$, and so $E_k \cap E^2=\{(i_0,j_0)\}$.
But then, $\vert z_{i_0,n} \vert \vert z_{j_0,n}\vert e^{\zeta_4 L_{i_0,j_0}} \to 0$, i.e. either $z_{i_0,n} \to 0$ or $z_{j_0,n} \to 0$, contradicting the definition of $E$. 
 Theorem \ref{thm 5} $b)$ follows.
\qed  

\bibliographystyle{plain}

\end{document}